\title[From Nesterov's Estimate Sequence to Riemannian Acceleration]{From Nesterov's Estimate Sequence to Riemannian Acceleration}
\renewcommand{\theequation}{\thesection.\arabic{equation}}
\renewcommand{\thesection}{\arabic{section}}
\numberwithin{equation}{section}
\newcommand{\eqnum}{\leavevmode\hfill\refstepcounter{equation}\textup{\tagform@{\theequation}}}
\newcommand{\algorithmstyle}[1]{\renewcommand{\algocf@style}{#1}}
\declaretheorem[ shaded={rulecolor=black, rulewidth=0.5pt, bgcolor=gray!7}, name=Theorem, numberwithin=section]{thmbox}
\declaretheorem[ shaded={rulecolor=black, rulewidth=0.5pt, bgcolor=white}, name=Lemma ,numberwithin=section]{lembox}
\declaretheorem[ name=Corollary,numberwithin=section]{corobox}
\declaretheorem[ shaded={rulecolor=black, rulewidth=0.5pt, bgcolor=gray!7}, name=Algorithm ]{alg}
\newcommand{\inp}[2]{\left\langle#1, #2 \right\rangle}
\newcommand{\inpp}[2]{\langle#1, #2 \rangle}
\newcommand{\norm}[1]{\left\lVert#1\right\rVert}
\newcommand{\normp}[1]{ \lVert#1\rVert}
\newcommand{\onorm}[1]{\left\lVert#1\right\rVert_{\textrm{op}}}
\newcommand{\OO}[1]{O\left( #1\right)}
\newcommand{\re}{\mathbb{R}}
\newcommand{\eps}{\epsilon}
\newcommand{\exm}[2]{\mathrm{Exp}_{#1}\left(#2 \right)  }
\newcommand{\expm}{\mathrm{Exp} }
\newcommand{\lm}[2]{\mathrm{Exp}^{-1}_{#1}\left(#2 \right)  }
\newcommand{\grad}{\mathsf{Grad}}
\newcommand{\mirr}{\mathsf{Mirr}}
\newcommand{\DD}[3]{d_{#1}(#2,#3)}
\newcommand{\dd}[2]{d\left(#1,#2\right)}
\newcommand{\T}{T}
\newcommand{\drr}[1]{\delta_{#1}}
\newcommand{\tri}[1]{\zeta_{#1}}
\newcommand{\cc}{\xi}
\newcommand{\ddr}[1]{\delta_{#1}}
\newcommand{\init}{D_{0}}
\newcommand{\xg}{x}
\newcommand{\xm}{w}
\newcommand{\wg}{ \alpha }
\newcommand{\wm}{ \beta }
\newcommand{\sg}{ \gamma}
\newcommand{\sm}{\eta}
\newcommand{\drop}{\Delta_{\sg}}
\newcommand{\err}{\mathcal{E}}
\newcommand{\vd}{T_{\kappa}}
\newcommand{\WW}{\widetilde{W}}
\newcommand{\XX}{\widetilde{X}}
\newcommand{\NN}{\widetilde{\nabla}}
\newcommand{\CC}{\widetilde{C}}
\newcommand{\cho}{\phi}
\newcommand{\cht}{\psi}
\newcommand{\ctt}{\tau}
\newcommand{\cde}{\theta}
\newcommand{\co}{\mathcal{C}_{\mu,L,\sg}}
\newcommand{\ct}{\mathcal{D}_{\kappa,\mu,\sg}}
\newcommand{\rr}{\lambda}
\newcommand{\upp}{\sigma}
 \author{\Name{Kwangjun Ahn} \Email{kjahn@mit.edu} 
 \AND
 \Name{Suvrit Sra} \Email{suvrit@mit.edu}\\
 \addr Department of Electrical Engineering and Computer Science, Massachusetts Institute of Technology}
\begin{document}

\maketitle

\vspace*{-.8cm}
\begin{abstract}%
We propose the first global accelerated gradient method for Riemannian manifolds. Toward establishing our result we revisit Nesterov's estimate sequence technique and develop an alternative analysis for it that may also be of independent interest. Then, we extend this analysis to the Riemannian setting, localizing the key difficulty due to non-Euclidean structure into a certain ``metric distortion.'' We control this distortion by developing a novel geometric inequality, which permits us to propose and analyze a Riemannian counterpart to Nesterov's accelerated gradient method. 
\end{abstract}

\section{Introduction}
\vspace*{-3pt}
First-order methods enjoy a well-developed theory for convex problems, while also demonstrating practical success in current machine learning tasks involving \emph{non-convex} problems. But non-convex problems are in general intractable, so the corresponding theory of first-order methods either limits itself to local (stationarity) results, or relies on special structure that permits sharper global analysis.

A promising instance of such special structure arises via non-Euclidean geometry. Specifically, via the concept of \emph{geodesic convexity} (\emph{g-convexity}), which defines convexity along geodesics in metric spaces~\citep{gromov1978manifolds,burago2001course,bridson2013metric}. 
Problems that are non-convex under a Euclidean view can sometimes be transformed into g-convex problems in a non-Euclidean space, potentially uncovering their \emph{tractability}. This viewpoint has proved fruitful in several applications (e.g. see \citep[Section 1.1]{zhang2016first}), as well as towards tackling some theoretical questions in computer science, mathematics, and physics---see \citep{burgisser2019towards,goyal2019sampling} and references therein, as well as \S\ref{related} of this paper.

Paralleling this viewpoint, several works have sought to analyze first-order methods in non-Euclidean settings, primarily in Riemannian manifolds~\citep{udriste1994convex, absil2009optimization} and CAT(0) spaces~\citep{bacak2014convex}. Earlier studies focus on \emph{asymptotic} analysis, while \citet{zhang2016first} obtain the first \emph{non-asymptotic}  global iteration complexity analysis for Riemannian (stochastic) gradient descent methods, assuming g-convexity. Subsequently, iteration-complexity results were established for Riemannian proximal-point methods~\citep{bento2017iteration}, Frank-Wolfe schemes~\citep{weber2019nonconvex}, variance reduced methods~\citep{zhang2016riemannian,kasai2016riemannian,zhang2018r,zhou2019faster}, trust-region methods \citep{agarwal2018adaptive}, among others.
  
Despite this progress, a landmark achievement of Euclidean convex optimization remains elusive in the Riemannian setting: that is, obtaining an analogue of Nesterov's accelerated gradient method~\citep{nesterov1983method}. This gap motivates the central question of our paper:
\begin{center}
  \emph{Is it possible to develop accelerated gradient methods for Riemannian manifolds?}
\end{center}%
This natural question, however, is believed to be highly non-trivial, especially because Nesterov's analysis heavily relies on the linear structure of Euclidean space. In particular, recent efforts were able to answer this question only \emph{partially}. See \S\ref{related} for details.

\subsection{Overview of our main results}
\vspace*{-4pt}   
In this paper, we take a major step toward answering this question by developing the \emph{first global accelerated first-order method} for Riemannian manifolds (formal statement in Theorem~\ref{thm:main}):
\begin{thmbox}[Informal] \label{thm:main:informal}
  Let $f$ be $L$-smooth and $\mu$-strongly convex in a geodesic sense. Then, there exists a \emph{computationally tractable} optimization algorithm satisfying\\[-10pt]
  \begin{equation*}
    f(x_t)-f(x_*)=\OO{(1-\cc_1)(1-\cc_2)\cdots (1-\cc_t)},
    \vspace*{-3pt}
  \end{equation*}
  where $\{\cc_t\}$ satisfies (i) $\{\cc_{t}\}_{t\ge 1} > \nicefrac{\mu}{L}$ ({\bf strictly faster} than gradient descent); and (ii) $\exists \rr\in(0,1)$ such that $\forall\eps>0$, $|\cc_t-\sqrt{2\mu\drop}|\leq \eps$, for $t\geq \Omega\bigl(\frac{\log(1/\eps)}{\log(1/\rr)}\bigr)$  (eventually achieves {\bf full acceleration}).
\end{thmbox}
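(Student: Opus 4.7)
The plan is to mirror Nesterov's estimate-sequence construction inside Riemannian geometry, isolating the obstruction created by curvature into a single controllable ``metric distortion'' scalar $\drop$. Concretely, I would maintain a sequence of model functions of the form $\cho_t(x)=\cho_t^{*}+\tfrac{\mu}{2}\,d(x,v_t)^2$ anchored at a shifting centre $v_t\in\M$, together with coupled iterates $(x_t,w_t)$ generated by a gradient step and a geodesic ``momentum'' step via $\expm$ and $\expm^{-1}$. As in the Euclidean case, the goal is the invariant $\cho_t^{*}\le f(x_t)$ together with $f(x_*)\ge \cho_t(x_*)-R_t$ for some residual $R_t=\OO{\prod_{s\le t}(1-\cc_s)}$, whose telescoping drives the final rate.

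\textbf{Step 1: a portable Euclidean analysis.} Before passing to manifolds, I would rewrite Nesterov's estimate-sequence argument in Euclidean space in a form whose only geometry-specific step is a single cosine-law identity of the shape
\begin{equation*}
\tfrac{\mu}{2}\norm{v_{t+1}-y}^2 \;=\; (1-\cc_t)\tfrac{\mu}{2}\norm{v_t-y}^2 + \cc_t\tfrac{\mu}{2}\norm{w_{t+1}-y}^2 - \tfrac{\mu\cc_t(1-\cc_t)}{2}\norm{v_t-w_{t+1}}^2,
\end{equation*}
applied at $y=x_{t+1}$. All other ingredients, namely $\mu$-strong convexity and $L$-smoothness, survive verbatim once the inner product is replaced by its Riemannian analogue. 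Packaging the Euclidean proof so that only this identity has to be ``repaired'' is what makes the transplant feasible, and it is exactly what the paper's alternative analysis of the estimate sequence is presumably designed to provide.

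\textbf{Step 2: geometric inequality controlling the distortion.} On a manifold the cosine law fails, so the identity of Step~1 must be replaced by a one-sided inequality with a curvature-dependent slack. The core technical contribution would be a novel geometric inequality of the form
\begin{equation*}
d(v_{t+1},y)^2 \;\le\; (1-\cc_t)\,d(v_t,y)^2 + \cc_t\,d(w_{t+1},y)^2 - \cc_t(1-\cc_t)\,\drop\cdot d(v_t,w_{t+1})^2,
\end{equation*}
valid whenever the iterates remain in a region where the sectional-curvature bounds and the step sizes $\sg,\sm$ yield a distortion factor $\drop$ bounded away from zero. I expect this to be the main obstacle: the inequality must hold \emph{uniformly} along the trajectory (so the algorithm must be designed to keep iterates inside a geodesic ball where the comparison is tight), and simultaneously $\drop$ must be large enough that $\sqrt{2\mu\drop}>\mu/L$, otherwise one gains nothing over gradient descent.

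\textbf{Step 3: rate and asymptotic acceleration.} Plugging the geometric inequality of Step~2 into the template of Step~1 and choosing $\cc_t$ to balance the resulting terms yields a one-dimensional recursion of the form $\cc_{t+1}^{\,2}=(1-\cc_{t+1})\,\cc_t^{\,2}+2\mu\drop\,\cc_{t+1}$. Two elementary facts about this recurrence finish the theorem. First, its unique positive fixed point is $\sqrt{2\mu\drop}$, and the iteration converges to it at a linear rate $1-\rr$ for some $\rr\in(0,1)$ depending on $\mu$ and $\drop$; unrolling this gives $|\cc_t-\sqrt{2\mu\drop}|\le\eps$ after $\Omega(\log(1/\eps)/\log(1/\rr))$ steps, which is (ii). Second, provided the algorithmic parameters are tuned so that $\drop>\mu/(2L^2)$, the resulting monotone sequence satisfies $\cc_t>\mu/L$ for every $t\ge 1$, which is (i). Telescoping the per-step decrease factors $(1-\cc_s)$ through the estimate-sequence invariant then gives the claimed bound on $f(x_t)-f(x_*)$.
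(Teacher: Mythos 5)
Your proposal diverges from the paper in a way that, as written, would reproduce the \emph{local} acceleration of \citet{zhang2018estimate} rather than the global result being claimed; there are also a couple of genuine gaps.

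\textbf{Conflation of two distinct quantities.} You use $\drop$ to denote a ``metric distortion scalar,'' but in the paper $\drop := \sg(1-L\sg/2)$ is a purely Euclidean step-size quantity; the curvature-induced distortion is a separate, per-iteration quantity $\drr{t}$. This matters: in the paper the fixed point of the recursion for $\cc_t$ is not $\sqrt{2\mu\drop}$ in general but $\cc(\ddr{})$, which decreases toward $2\mu\drop$ as the distortion $\ddr{}\to\infty$. Your claim that ``(i) holds provided $\drop>\mu/(2L^2)$'' therefore does not account for distortion at all; the actual reason $\cc_t>2\mu\drop$ for all $t$ is that $\cc(\ddr{})>2\mu\drop$ for \emph{every} $\ddr{}\ge 1$, not a tuning condition.

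\textbf{The approach is intrinsically local.} Your plan is to obtain a cosine-law surrogate with a slack factor that ``must hold uniformly along the trajectory (so the algorithm must be designed to keep iterates inside a geodesic ball where the comparison is tight).'' That is precisely the strategy that yields only local acceleration: a single uniform distortion bound requires an a priori bound on how far the iterates wander from the center, and the ball in which this works shrinks with the condition number and curvature. The paper goes to some lengths to break out of this. Two ingredients are essential and absent from your proposal. First, the potential is built from the \emph{projected} distance $\DD{x_t}{z_t}{x_*}=\norm{\lm{x_t}{z_t}-\lm{x_t}{x_*}}$ taken in a single tangent space (Definition~\ref{def:pd}), not from $d(z_t,x_*)$; this shifts all the curvature loss into a single comparison between $\DD{x_t}{\cdot}{\cdot}$ and $\DD{x_{t+1}}{\cdot}{\cdot}$. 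Second, the improved distortion inequality (Lemma~\ref{lem:betterdist}) yields a valid distortion rate $\vd(\dd{x_t}{z_t})$ that depends only on $\dd{x_t}{z_t}$ and is therefore \emph{computable by the algorithm at step $t$}; the naive Rauch bound would instead depend on $\max\{\dd{x_t}{z_t},\dd{x_t}{x_*}\}$, which is inaccessible. You do not supply anything playing this role, and without it the algorithm cannot adaptively choose $\cc_{t+1}$ in the recursion.

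\textbf{No argument for eventual full acceleration.} Even granting an adaptive distortion rate, your claim (ii) requires showing that $\drr{t}\to 1$, i.e.\ that $\dd{x_t}{z_t}\to 0$. This is a nontrivial output of the potential-function analysis itself (the potential controls $\DD{x_t}{z_t}{x_*}$, and Lemma~\ref{lem:shrink:d} converts that into a bound on $\dd{x_t}{z_t}$ under a step-size restriction $\sg L\in(1,2-\cc_t]$). Your proposal has nothing that would force the distortion to diminish; it simply assumes a fixed distortion along the trajectory, which would give a \emph{constant}-rate improvement over gradient descent (as in the illustrative example in \S\ref{sec:riempoten}) rather than the claimed convergence of $\cc_t$ to $\sqrt{2\mu\drop}$. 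Finally, the recursion you write down, $\cc_{t+1}^2=(1-\cc_{t+1})\cc_t^2+2\mu\drop\cc_{t+1}$, is the paper's recursion only in the \emph{undistorted} case $\drr{}=1$; the correct recursion divides $\cc_t^2$ by $\drr{t+1}$, and that factor is precisely what makes the analysis of its evolution (Lemma~\ref{lem:appen}, Proposition~\ref{xiconv}) nontrivial.

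In short: packaging the Euclidean argument so that only a single distance inequality needs repair is the right instinct and matches the paper's design, but the repair you propose is a uniform local comparison, whereas the paper's core contributions are (a) moving to projected distances, (b) a distortion inequality whose rate is observable, and (c) a shrinking-distance lemma that drives the rate to $1$. Without (b) and (c), claims (i) and (ii) of the theorem cannot be established globally.
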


To establish Theorem~\ref{thm:main:informal}, we revisit and develop an alternative analysis to Nesterov's  analysis for the Euclidean case (\S\ref{sec:euclid}).
We use the technique of \emph{potential functions} (also referred to as \textit{Lyapunov functions})~\citep{lyapunov1992general} for our analysis--see \S\ref{sec:dis} for details. Interestingly, the parameters of the algorithm determined--from first principles--by our potential function analysis \emph{exactly} satisfy the complicated recursive relations derived by Nesterov, thereby providing a simple alternative to his famous estimate sequence technique as a byproduct (\S\ref{sec:ensure}). 
Further, we also develop a fixed-point based analysis of how  accelerated convergence rates are derived from such complicated relations (\S\ref{sec:conv}), again providing a simple alternate to Nesterov's original analysis based on clever algebras.

Our new analysis is then applied to the Riemannian case (\S\ref{sec:riem} and \S\ref{sec:riemaccel}).
By carefully choosing potential function with the notion of projected distances, we nail down the main difficulty in the non-Euclidean case through the distortion in such distances (\S\ref{sec:riempoten}).
We then demonstrate that our analysis for a simplified setting already recovers the local acceleration results by \citep{zhang2018estimate}.
To tackle global acceleration, we establish a novel metric distortion inequality based on comparison theorems in Riemannian geometry (\S\ref{sec:valid}).
Our metric distortion result engenders a distortion rate that is accessible to algorithm at each iteration.
Finally, we show that such chosen tractable distortion rate decreases over iterations (\S\ref{sec:achieve}), thereby obtaining Theorem~\ref{thm:main:informal}.
\vspace*{-6pt}   
\subsection{Related work} \label{related}
Recently, a few efforts have been made to answer the main question of this paper. The first attempt is in~\citep{liu2017accelerated}, where the authors consider some nonlinear equations ((4) and (5) therein) and claim that the solutions to those equations yield accelerated schemes. However, it is \emph{a priori} unclear whether such equations are even feasible and whether solving them is tractable.

As for different approach, \cite{alimisis2019continuous} establish a Riemannian analogue of the  differential-equation approach to accelerated methods~\citep{su2014differential}. In particular, they propose and analyze several second-order ODEs on Riemannian manifolds that achieve accelerated convergence rates. Employing discretization results from the Euclidean case~\citep{betancourt2018symplectic,shi2019acceleration}, they also derive some first-order methods from the ODEs.  However, it is not clear whether the resulting methods achieve acceleration because such discretization techniques do \textit{not} directly yield Nesterov's accelerated method even in the Euclidean case.
   
The most concrete progress is the work~\citep{zhang2018estimate} that proves accelerated convergence of their algorithm, albeit only \emph{locally}: acceleration is shown to hold within a neighborhood whose radius vanishes as the condition number $L/\mu$ and the curvature bound $\kappa$ grow. Indeed, their work does not characterize how the algorithm behaves outside such a local neighborhood. This is in stark contrast with our \emph{global} acceleration result.  See \S\ref{sec:riempoten} for a detailed comparison.
  
On a less related note, \citet{siegel2019accelerated} develop accelerated gradient methods for functions on \emph{Stiefel manifolds}, by designing (i) a non-convex analogue of the adaptive restart methods due to~\citet{o2015adaptive}, and (ii) an analogue of momentum steps specifically for Stiefel manifolds. 
However, the accelerated performance is only verified \emph{numerically}. 

  
\vspace*{-6pt}   
\section{Warm up in the Euclidean case: alternative analysis of Nesterov's optimal method}
\label{sec:euclid}
Before we consider the Riemannian setting, we first illustrate our analysis in the Euclidean case.
In particular, we consider Nesterov's optimal method which is derived  based on an ingenious construction called an \emph{estimate sequence}~\citeyearpar[Ch. 2.2.1]{nesterov2018lectures}: For $t\geq 0$, the iterates are updated as\\[-10pt]
\begin{subequations} \label{nesterov}
	\begin{align}
	    x_{t+1} &\leftarrow y_t + \wg_{t+1} (z_t-y_t)\label{nest:0}\\
		y_{t+1} &\leftarrow  x_{t+1} -\sg_{t+1} \nabla f(x_{t+1})  \label{nest:1} \\
		z_{t+1} &\leftarrow  x_{t+1} + \wm_{t+1} (z_t-x_{t+1})  - \sm_{t+1} \nabla f(x_{t+1})\,.\label{nest:2}
	\end{align}
\end{subequations}
for given initial iterates $y_0=z_0\in \re^n$.  
This construction yields optimal first-order methods that achieve the lower bounds under the black-box complexity model~\citep{nemirovsky1983problem}. Note that  
without resorting to estimate sequences, the updates~\eqref{nesterov} can be also derived via the
\emph{linear coupling} framework of \citet{allen2014linear}. See Appendix~\ref{sec:linear} for details.

Despite its fundamental nature, there is a well-known puzzling aspect of Nesterov's construction. In order to guarantee the standing assumption of the estimate sequence technique~\citeyearpar[(2.2.3)]{nesterov2018lectures}, Nesterov's original analysis~\citeyearpar[page 87]{nesterov2018lectures} finds complicated recursive relations between parameters $\wg,\wm,\sg,\sm$ via some non-trivial algebraic ``tricks.'' These tricks are carried out in a fortuitous manner, obscuring the driving principle and the scope of the underlying technique. Notably, the work~\citep{zhang2018estimate} favors estimate sequences over other approaches, but still achieves only local acceleration. Therefore, toward our goal of obtaining global acceleration, we revisit Euclidean analysis of acceleration from first-principles.

In particular, we provide an alternative analysis of iteration~\eqref{nesterov} that sheds new light on understanding the scope of  Nesterov's original analysis. Our analysis employs a \textit{potential function},\footnote{Also known as \textit{Lyapunov function} in control theory or \textit{invariant} in theoretical computer science and mathematics}, a classical tool for studying the stability of dynamical systems~\citep{lyapunov1992general}, which has received a resurgence of interest due to its success in analyzing first-order optimization methods (see \S\ref{sec:dis} for details). Roughly, the convergence analysis based on potential-functions proceeds as follows:
\begin{enumerate}
  \setlength{\itemsep}{0pt}
\item  
  \textbf{Choose a potential function:} 
  First, choose a performance measure $\err_t$ defined for each step  that ``measures'' how close the iterates at step $t$ are to the optimal solution.  Having chosen $\err_t$, define the potential function as $\Phi_t:=A_t \err_t$ for some quantity $A_t$ to be determined.
\item {\bf Ensure potential decrease:} Choose an increasing sequence $A_t$ for which $\Phi_t$ is decreasing.  
\end{enumerate}
Once $\Phi_t$ is chosen as above, it immediately implies that $\err_t \leq \err_0/A_t$, yielding a convergence rate of $\OO{1/A_t}$ under the chosen performance measure. 
See \S\ref{sec:dis} for a discussion positioning our proposed potential function to related as well as different techniques for analyzing acceleration. 

\vspace*{-3pt}
\subsection{Choosing the potential function}
The key to potential function based analysis is to choose the ``correct'' performance measure.  
For an iterate $u_t$ at step $t$, two prototypical choices might be (i) the suboptimality  $\err_t=f(u_t)-f(x_*)$; and (ii) the distance to an optimal point $\norm{u_t- x_*}$. 
Indeed, many existing analyses correspond to choosing either one for performance measure, as explicitly observed in~\citep{bansal2019potential}.

For iteration~\eqref{nesterov}, it turns out that a weighted sum of the suboptimality $f(y_t)-f(x_*)$ and the distance $\norm{z_t-x_*}^2$ is the ``correct'' performance measure, i.e., we choose the potential  function as
\begin{align}
   \Phi_t := A_t\cdot \left( f(y_t) -f(x_*)\right) + B_t \cdot \norm{z_t-x_*}^2\label{def:poten}
\end{align}
for some $A_t>0$ and $B_t\geq 0$. By taking a weighted sum of the two measures, this performance measure does not require either  one to be monotonically decreasing over iterations.
This property, also known as \textit{non-relaxational property}, was indeed the main innovation  in Nesterov's illustrious paper~\citeyearpar{nesterov1983method}. The reason why we choose $y_t$ for the cost and $z_t$ for the distance will be clearer when we carry out the analysis. See Remark~\ref{rmk:why}.

The current form of the potential function~\eqref{def:poten} is not new; indeed it also appears in prior works~\citep{wilson2016lyapunov,diakonikolas2019approximate,bansal2019potential}, although with different motivations; see \S\ref{sec:dis} for precise details. Moreover, this potential also has an interpretation under the linear coupling framework of~\citep{allen2014linear}; see Appendix~\ref{sec:linear}.

\vspace*{-3pt}
\subsection{Potential difference calculations}
\label{sec:poten}
Having chosen the potential function~\eqref{def:poten}, the main goal now is to choose the parameters  $A_{t+1}$, $B_{t+1}$, $\wg_{t+1}$, $\wm_{t+1}$, $\sg_{t+1}$, $\sm_{t+1}$ so that the potential decreases, i.e., $\Phi_{t+1}-\Phi_t\leq 0$. To that end, we first express the potential difference $\Phi_{t+1}-\Phi_t$ more simply and derive a manageable upper bound using \emph{first principles}. Using defintion~\eqref{def:poten}, the difference $\Phi_{t+1}-\Phi_t$ can be split into two parts: 
\begin{align}
      &A_{t+1}\cdot \left(f(y_{t+1})-f(x_*) \right)  - A_t\cdot (f(y_t)-f(x_*)) \label{eq:gd}\\
    +\ &\ B_{t+1}\cdot  \norm{z_{t+1}-x_*}^2-B_t\cdot \norm{z_t-x_*}^2 \,.
    \label{eq:md} 
\end{align}

Since $\wg,\wm,\sg,\sm$ will only appear with index $t+1$, we drop their subscripts for simplicity. We first relate the terms for step $t+1$ with those for step $t$.
To do that, we reinterpret \eqref{nesterov}. Using the notation $\grad_{s \cdot \nabla }(x):=x-s\cdot \nabla$, the updates~\eqref{nest:1} and \eqref{nest:2} can be rewritten as 
\begin{align}
    \tag{\ref{nest:1}$'$} y_{t+1}&=\grad_{\sg \cdot\nabla f(x_{t+1})}(x_{t+1}) \label{nest:1'}\\
    \tag{\ref{nest:2}$'$}z_{t+1}&= \grad_{\sm \cdot\nabla f(x_{t+1})}( x_{t+1}+\wm (z_t-x_{t+1}))  \label{nest:2'}\,,
\end{align}
respectively. Now the difference between \eqref{nest:1'} and 
\eqref{nest:2'} is clear:  the first one is  an \emph{exact} gradient step in the sense that $\nabla=\nabla f(x)$, while the second one is \emph{inexact}.
Hence, in relating the terms for step $t+1$ with those for step $t$, we need to invoke different analyses for two different gradient steps.

Let us now review folklore analyses for gradient steps; the proofs are provided in Appendix~\ref{pf:folk}.  
First, when the gradient step is exact, the following result is well-known for an $L$-smooth cost:
\begin{proposition} \label{folk:grad}
Assume $\nabla=\nabla f(x)$, and let  $y =\grad_{s \cdot \nabla}(x)$. If $f$ is $L$-smooth, then the gradient step decreases cost:
$f(y)-f(x) \leq -s\left( 1 - Ls/2 \right)  \norm{\nabla}^2$.
\end{proposition}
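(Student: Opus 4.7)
The plan is to deduce the claim in one line from the standard descent lemma for $L$-smooth functions, which is the only nontrivial tool in play. Specifically, $L$-smoothness gives the quadratic upper model
\begin{equation*}
f(y) \leq f(x) + \inpp{\nabla f(x)}{y - x} + \tfrac{L}{2}\norm{y - x}^2,
\end{equation*}
obtained (if one wishes to derive it from scratch) by writing $f(y) - f(x) = \int_0^1 \inpp{\nabla f(x + \tau(y - x))}{y - x}\,d\tau$, adding and subtracting $\inpp{\nabla f(x)}{y - x}$, and bounding the remainder via Cauchy--Schwarz together with the Lipschitz condition $\norm{\nabla f(x + \tau(y - x)) - \nabla f(x)} \leq L\tau\norm{y - x}$.

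Next, I would specialize to the exact gradient step $y = \grad_{s\cdot \nabla}(x) = x - s\nabla$ with $\nabla = \nabla f(x)$. Then $y - x = -s\nabla$, so the linear term becomes $\inpp{\nabla f(x)}{y - x} = -s\norm{\nabla}^2$ and the quadratic term becomes $\tfrac{L}{2}\norm{y - x}^2 = \tfrac{Ls^2}{2}\norm{\nabla}^2$. Plugging back into the descent inequality yields
\begin{equation*}
f(y) - f(x) \leq -s\norm{\nabla}^2 + \tfrac{Ls^2}{2}\norm{\nabla}^2 = -s\bigl(1 - \tfrac{Ls}{2}\bigr)\norm{\nabla}^2,
\end{equation*}
which is exactly the stated inequality.

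There is no substantive obstacle here; this is a one-step application of the descent lemma and is essentially what the paper labels as folklore. The only subtlety worth flagging is the hypothesis $\nabla = \nabla f(x)$: it is precisely this identification that makes the linear term align with the step direction and produces the clean coefficient $s(1 - Ls/2)$. The inexact counterpart implicit in the $z_{t+1}$ update~\eqref{nest:2'} will not benefit from this cancellation, so I would expect the companion result (on inexact/mirror-descent-style steps) to require a distinct argument rather than a trivial modification of the above.
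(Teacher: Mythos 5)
Your proof is correct and follows exactly the same route as the paper's: apply the $L$-smoothness descent lemma $f(y)\leq f(x)+\inpp{\nabla f(x)}{y-x}+\tfrac{L}{2}\norm{y-x}^2$, substitute $y-x=-s\nabla f(x)$, and collect terms. The extra material you include (deriving the descent lemma from the integral representation, and the remark on why $\nabla=\nabla f(x)$ matters) is sound but not part of the paper's one-line argument.
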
 
When the gradient step is inexact, one can only guarantee the following weaker result:
\begin{proposition}\footnote{Actually, the fact that this type of analyses (perhaps, better known as mirror descent analyses) can be applied to inexact gradient steps has brought about many successful applications in \textit{online optimization} (see e.g.~\citep{bubeck2011introduction}).} \label{folk :mirror}
  Let $z=\grad_{s\cdot \nabla }(x)$. Then, for any $x_*$, $\norm{z-x_*}^2- \norm{x-x_*}^2 = s^2 \norm{\nabla}^2 + 2s \inp{\nabla}{x_*-x}$, i.e.,   (inexact) gradient step decreases the distance to $x_*$ as long as direction $-\nabla$ is well aligned with the vector $x_*-x$ and has sufficiently small norm. 
 \end{proposition}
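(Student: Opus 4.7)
The statement to prove is the identity
\begin{equation*}
\norm{z-x_*}^2 - \norm{x-x_*}^2 = s^2\norm{\nabla}^2 + 2s\inp{\nabla}{x_*-x},
\end{equation*}
for $z = \grad_{s\cdot \nabla}(x) = x - s\nabla$ and arbitrary $x_*$. This is a pure algebraic identity; there is no convexity, smoothness, or optimality hypothesis, and the quantity $\nabla$ is just an arbitrary vector (it need not equal $\nabla f(x)$). So the plan is not to invoke any analytic machinery, but simply to expand a square.

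The plan is to write $z - x_* = (x - x_*) - s\nabla$ and expand $\norm{z-x_*}^2$ via the polarization-style identity $\norm{a-b}^2 = \norm{a}^2 - 2\inp{a}{b} + \norm{b}^2$, with $a = x - x_*$ and $b = s\nabla$. This gives
\begin{equation*}
\norm{z-x_*}^2 = \norm{x-x_*}^2 - 2s\inp{x - x_*}{\nabla} + s^2\norm{\nabla}^2,
\end{equation*}
and rearranging yields $\norm{z-x_*}^2 - \norm{x-x_*}^2 = s^2\norm{\nabla}^2 + 2s\inp{\nabla}{x_* - x}$, which is exactly the claim.

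The qualitative interpretation promised after the identity --- namely, that the distance to $x_*$ decreases provided $-\nabla$ is sufficiently well aligned with $x_*-x$ and sufficiently small in norm --- is immediate from inspecting the two terms on the right-hand side: the quadratic term $s^2\norm{\nabla}^2$ is always nonnegative, while the cross term $2s\inp{\nabla}{x_*-x}$ is negative precisely when $\inp{-\nabla}{x_*-x} > 0$, i.e. when $-\nabla$ points toward $x_*$ from $x$; the net effect is a decrease whenever $2s\inp{-\nabla}{x_*-x} > s^2\norm{\nabla}^2$, i.e. whenever the step size is small enough relative to the alignment.

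There is no real obstacle here: the whole content of the proposition is the three-point expansion of the Euclidean norm. The point of stating it separately, rather than as an observation, is presumably to contrast Proposition~\ref{folk:grad} (exact gradient step, gives \emph{cost} decrease) with this identity (arbitrary vector $\nabla$, gives a \emph{distance} identity), since later in the analysis the authors will instantiate $\nabla = \nabla f(x_{t+1})$ and apply this identity with $x$ replaced by $x_{t+1} + \wm(z_t - x_{t+1})$ to bound the mirror-descent piece~\eqref{eq:md} of the potential difference.
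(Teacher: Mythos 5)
Your proof is correct and matches the paper's: Appendix~\ref{pf:folk} expands $\norm{z-x_*}^2 = \norm{(z-x)+(x-x_*)}^2$ and substitutes $z-x=-s\nabla$, which is the same three-point expansion you carry out (written as $\norm{(x-x_*)-s\nabla}^2$). Your accompanying remarks on the sign of the cross term and on why the identity holds for an arbitrary vector $\nabla$ are consistent with how the paper uses the result.
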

\begin{remark} \label{rmk:why}
These observations reveal why we use $y_t$ for the cost term and $z_t$ for the distance term in~\eqref{def:poten}: Proposition~\ref{folk:grad} deals with the cost, while Proposition~\ref{folk :mirror} deals with the distance.
\end{remark}

\noindent Now we apply Proposition~\ref{folk:grad} to \eqref{nest:1'} and Proposition~\ref{folk :mirror} to \eqref{nest:2'}. For clarity, we denote:
\begin{equation}
\label{eq:2}
\drop  := \sg  (1-L\sg  /2)\,,\quad\nabla := \nabla f(x_{t+1})\,,\quad X:=-(x_*-x_{t+1})\,,~\text{and}~ W:=z_t-x_{t+1}\,.
\end{equation}
With this notation, Propositions~\ref{folk:grad} and \ref{folk :mirror} imply:
$f(y_{t+1}) \leq f(x_{t+1})-\drop\norm{\nabla}^2$ and $\norm{z_{t+1}-x_*}^2 = \norm{X+\wm W}^2 + \sm^2 \norm{\nabla}^2 -2\sm \inp{\nabla}{X+\wm W}$. Plugging these two back into to \eqref{eq:gd} and \eqref{eq:md}, one can derive the following upper bound on $\Phi_{t+1}-\Phi_t$ in terms of the vectors $\nabla, X, W$ from \emph{first principles} (i.e., using only smoothness and (strong) convexity; see Appendix~\ref{app:derive}):
\begin{mdframed}
$\Phi_{t+1}-\Phi_t \le C_1\cdot \norm{W}^2 + C_2\cdot \norm{X}^2 +C_3\norm{\nabla}^2 +C_4 \cdot \inp{W}{X} +C_5\cdot \inp{W}{\nabla}+C_6\cdot \inp{X}{\nabla}, \eqnum \label{upper:1}$\\[6pt]
where $\begin{cases}
  C_1:=\wm^2 B_{t+1}-B_t  -  \frac{\mu}{2}\frac{\wg^2}{(1-\wg)^2}  A_t\,, &    C_2:=B_{t+1}-B_t - \frac{\mu}{2}(A_{t+1}-A_t)\,,\\
    C_3:= \sm^2 B_{t+1} - \drop \cdot A_{t+1}\,, &
    C_4:=2 \cdot \left(\wm B_{t+1} -B_t  \right)\,,\\
    C_5:= \frac{\wg}{1-\wg} A_t-2\wm\sm B_{t+1}\,,\quad\text{and} &
    C_6:=(A_{t+1}-A_t)-2\sm B_{t+1}\,.
    \end{cases}$
\end{mdframed}
The reader may have noticed that in~\eqref{eq:2} the vectors $\nabla$,$X$,$W$ are rooted at the same point $x_{t+1}$. This choice is deliberate, and will be crucial in the Riemannian setting, because there the vectors have to lie in the same \emph{tangent space} for their compatibility. See Appendix~\ref{pf:thm2}.

\vspace*{-3pt}
\subsection{Ensuring potential decrease} \label{sec:ensure}
Having established the bound~\eqref{upper:1}, our goal is to now choose $A_{t+1},B_{t+1},\wg,\wm,\sg,\sm$ given $A_t,B_t$ so that \eqref{upper:1} is non-positive (recall that we have dropped indices of $\wg_{t+1},\wm_{t+1},\sg_{t+1},\sm_{t+1}$). In general, it is difficult to ensure non-positivity of a symbolic expression; but since \eqref{upper:1} is a quadratic form, one avenue might be to turn it into a \emph{negative sum of squares} (``$-$SoS''). The simplest strategy to make it ``$-$SoS'' would be to choose parameters so that the coefficients $C_4$, $C_5$, $C_6$ of the cross terms are set to $0$, while $C_1,C_2,C_3\leq 0$. It turns out this strategy \emph{fully} determines the parameters.
\begin{list}{{\tiny $\blacksquare$}}{\leftmargin=1.5em}
  \vspace*{-4pt}
  \setlength{\itemsep}{0pt}
    \item \emph{Coefficients of cross terms characterize $\wg,\wm,\sm$ in terms of $A_{t+1},B_{t+1}$:} 
    From $C_6=0$, we get $\sm  = (A_{t+1}-A_t)/(2B_{t+1})$, and from $C_4=0$, we get $\wm = B_t/B_{t+1}$. Plugging these into $C_5=0$, we obtain the equation $\frac{\wg}{1-\wg}A_{t} = (A_{t+1}-A_t)B_t/B_{t+1}$. To summarize:\\[-8pt]
    \begin{equation}
      \label{stepsizes}
      \sm  = \tfrac{A_{t+1}-A_t}{2B_{t+1}}\,,\quad \wm = \tfrac{B_t}{B_{t+1}}\quad~\text{and}\quad\tfrac{\wg}{1-\alpha} = \tfrac{(A_{t+1}-A_t)B_t}{A_t B_{t+1}}\,. 
      \vspace*{-4pt}
    \end{equation}
  \item \emph{For a fixed $\sg$, coefficients of squared terms determines $A_{t+1},B_{t+1}$ based on given $A_t,B_t$:}
    Beginning with $C_3\leq 0$, we replace $\sm$ with the one from \eqref{stepsizes} to obtain:\\[-8pt]
   \begin{equation}
     \label{b:from:a}
     (A_{t+1}-A_t)^2/(4\drop \cdot  A_{t+1}) \leq B_{t+1}\,.
     \vspace*{-4pt}
\end{equation}
Plugging \eqref{b:from:a} into $C_2\leq 0$, we get an inequality only in terms of $A_{t+1}$ (assuming $\sg$ is fixed):\\[-8pt]
 \begin{equation}
   \label{ineq:a}
   (A_{t+1}-A_t)^2/(4\drop\cdot  A_{t+1}) -  (A_{t+1}-A_t)\tfrac{\mu}{2} \leq B_t \,.  
   \vspace*{-3pt}
 \end{equation}
 Recall from the potential function analysis that we need to choose $A_{t+1}$ as large as possible. It turns out that the largest possible $A_{t+1}$ satisfies~\eqref{ineq:a} as an equality  (hence~\eqref{b:from:a} as well). To see this, let us follow Nesterov's notation and consider the \emph{suboptimality shrinking ratio} $1-\cc:=\nicefrac{A_{t}}{A_{t+1}}$.\footnote{More precisely, Nesterov's estimate sequence analysis finds $\alpha_i\in (0,1)$ such that $f(y_t)-f(x_*) \leq \prod_{i=1}^t(1-\alpha_i)\cdot \left[ f(x_0)-f(x_*) + C\norm{x_0-x_*}^2\right]$ for some constant $C>0$, where $x_0$ is an initial iterate (see \citeyearpar[Theorem 2.2.1]{nesterov2018lectures}). Note that these $\alpha_i$'s exactly correspond to our suboptimality shrinking ratios.}
 With this ratio, inequality~\eqref{ineq:a} can be rewritten as:\\[-8pt]
 \begin{equation}
   \label{ineq:a2}
   \cc(\cc-2\mu\drop) /(1-\cc) \leq 4\drop\cdot  B_t/A_t\,.
   \vspace*{-4pt}
 \end{equation}
 In~\eqref{ineq:a2} note that the RHS is a nonnegative constant (assuming $\drop >0$ is already chosen) and the LHS is an increasing function on $[2\mu\drop,1)$ whose value is $0$ at $2\mu\drop$ and approaches $+\infty$ as $\cc\to 1$. Hence, the largest $\cc$ (equivalently, the largest $A_{t+1}$) satisfies \eqref{ineq:a2} (or equivalently, \eqref{ineq:a})  with equality. Consequently, this choice of $\cc$ also satisfies \eqref{b:from:a} with equality. One can then verify that this choice satisfies $\wm^2 B_{t+1}\leq B_t$ and hence implies $C_1\leq0$ (see Appendix~\ref{pf:thm1}).

 \item \emph{Lastly, choose $\sg$ from $(0,2/L)$:} Now the last variable to determine is $\sg$. The above calculations are valid as long as $\drop>0$, so we can arbitrarily choose $\sg$ in $(0,2/L)$. Note that most accelerated methods in the literature choose $\sg=1/L$ since it is the maximizer of $\drop$.
\end{list}
Furthermore, note that one can express the other variables $B_{t+1},\wg,\wm,\sm$ via equation~\eqref{stepsizes} and the equality version of \eqref{b:from:a} (see Appendix~\ref{pf:thm1}). Therefore, our findings can be summarized in the main result of this section as follows (after recovering the indices of $\wg,\wm,\sg,\sm$):

\begin{thmbox}[Parameter choice for potential decrease] \label{thm:euclidean}
  For $\sg_{t+1}\in(0,2/L)$, $\drop:= \sg_{t+1}(1-L\sg_{t+1}/2)$, and given $y_t,z_t$ and $A_t,B_t>0$, 
  \begin{enumerate}
    \setlength{\itemsep}{1pt}
  \item Compute $\cc_{t+1} \in [2\mu\drop, 1)$ satisfying $\frac{\cc_{t+1}(\cc_{t+1}-2\mu\drop) }{1-\cc_{t+1}} = 4\drop\cdot \frac{B_t}{A_t}. \eqnum \label{eq:recur}$
  \item Determine parameters based on $\cc$: $A_{t+1} =\frac{A_t}{1-\cc_{t+1}}$, $B_{t+1} = \frac{\cc_{t+1}^2}{1-\cc_{t+1}}\cdot \frac{A_t}{4\drop}$, $\wg_{t+1}=\frac{\cc_{t+1}-2\mu\drop}{1-2\mu\drop}$, $\wm_{t+1}=1- 2\mu\drop\cc_{t+1}^{-1}$, and $\sm_{t+1}=2\drop\cc_{t+1}^{-1}$.
  \end{enumerate}
  Then, $y_{t+1},z_{t+1}$ defined as per \eqref{nesterov} satisfy $\Phi_{t+1}\leq \Phi_t$ (see \eqref{def:poten}). 
  In other words, $$f(y_{t+1})-f(x_{*}) +\tfrac{\cc_{t+1}^2}{4\drop}\cdot\norm{z_{t+1}-x_*}^2 \leq (1-\cc_{t+1})\cdot \bigl[ f(y_t)-f(x_*)+\tfrac{B_t}{A_t}\cdot \norm{z_t-x_*}^2\bigr]\,.$$
\end{thmbox}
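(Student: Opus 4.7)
The plan is to verify directly that the choice of $A_{t+1}, B_{t+1}, \wg_{t+1}, \wm_{t+1}, \sg_{t+1}, \sm_{t+1}$ specified in the theorem causes the quadratic upper bound~\eqref{upper:1} on $\Phi_{t+1}-\Phi_t$ to become non-positive, and then translate $\Phi_{t+1}\leq \Phi_t$ into the explicit one-step contraction. The preceding discussion in \S\ref{sec:ensure} has already essentially dictated the parameters by imposing $C_4=C_5=C_6=0$ together with the largest $A_{t+1}$ consistent with $C_2\le 0,\ C_3\le 0$. Thus the proof amounts to substituting the closed-form parameters back into the $C_i$'s and checking the signs.

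First I would restate~\eqref{upper:1}, set $\tau_{t+1}:=\cc_{t+1}$ for brevity, and record the three consequences of the theorem's formulas that I will use repeatedly: (i) $A_{t+1}-A_t = \frac{\cc_{t+1}}{1-\cc_{t+1}}\,A_t$, (ii) $B_{t+1} = \frac{\cc_{t+1}^2 A_t}{4\drop(1-\cc_{t+1})}$, and (iii) the defining recursion~\eqref{eq:recur}, which can be rewritten as $B_t = \frac{A_t\,\cc_{t+1}(\cc_{t+1}-2\mu\drop)}{4\drop(1-\cc_{t+1})}$. From (i)--(iii) a short algebraic check gives $\sm_{t+1}=\frac{A_{t+1}-A_t}{2B_{t+1}} = \frac{2\drop}{\cc_{t+1}}$ and $\wm_{t+1}=\frac{B_t}{B_{t+1}} = 1-\frac{2\mu\drop}{\cc_{t+1}}$, matching the statement and delivering $C_6=0$ and $C_4=0$ by definition. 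Next, substituting $\wg_{t+1}=\frac{\cc_{t+1}-2\mu\drop}{1-2\mu\drop}$ yields $\frac{\wg_{t+1}}{1-\wg_{t+1}} = \frac{\cc_{t+1}-2\mu\drop}{1-\cc_{t+1}}$, and plugging this together with $\wm_{t+1}\sm_{t+1}$ into $C_5$ verifies $C_5=0$.

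I then turn to the squared coefficients. For $C_3$, substituting $\sm_{t+1}$ and $B_{t+1}$ gives $\sm_{t+1}^2 B_{t+1} = \frac{\drop A_t}{1-\cc_{t+1}} = \drop A_{t+1}$, so $C_3=0$. For $C_2$, substituting $B_{t+1}$ and $A_{t+1}-A_t$ produces $\frac{\cc_{t+1}(\cc_{t+1}-2\mu\drop) A_t}{4\drop(1-\cc_{t+1})} - B_t$, which vanishes by (iii). Finally, because $\wm_{t+1}=B_t/B_{t+1}$, we have $\wm_{t+1}^2 B_{t+1} = B_t^2/B_{t+1}$, and the ratio $B_{t+1}/B_t = \cc_{t+1}/(\cc_{t+1}-2\mu\drop)\geq 1$ under the constraint $\cc_{t+1}\in [2\mu\drop,1)$, so $\wm_{t+1}^2 B_{t+1}\leq B_t$ and consequently $C_1\leq -\frac{\mu}{2}\frac{\wg_{t+1}^2}{(1-\wg_{t+1})^2}A_t \leq 0$. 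Combining the five sign facts with~\eqref{upper:1} gives $\Phi_{t+1}\leq \Phi_t$.

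The last step is to unfold $\Phi_{t+1}\leq \Phi_t$: dividing by $A_{t+1}$, using $A_t/A_{t+1}=1-\cc_{t+1}$, and noting $B_{t+1}/A_{t+1} = \cc_{t+1}^2/(4\drop)$ yields the stated one-step inequality. The only place where a genuine inequality rather than an equality enters is the $C_1$ bound, so the main (and really the only) obstacle is to confirm that $B_t\leq B_{t+1}$; this is exactly where the restriction $\cc_{t+1}\geq 2\mu\drop$ imposed in part~1 of the theorem is indispensable, and it is the same monotonicity of the map $\cc\mapsto \cc(\cc-2\mu\drop)/(1-\cc)$ used in \S\ref{sec:ensure} to argue that the largest feasible $A_{t+1}$ makes~\eqref{ineq:a2} an equality. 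Everything else is mechanical substitution.
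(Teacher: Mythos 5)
Your proof is correct and follows essentially the same route as the paper's (Appendix~\ref{pf:thm1}): substituting the closed-form parameters into the coefficients $C_1,\dots,C_6$ of the upper bound~\eqref{upper:1}, verifying $C_4=C_5=C_6=0$ and $C_2=C_3=0$ by algebra, and showing $C_1\le 0$ via $\wm_{t+1}^2 B_{t+1}\le B_t$ (you phrase this as $B_{t+1}/B_t=\cc_{t+1}/(\cc_{t+1}-2\mu\drop)\ge 1$, which is the same observation the paper packages as $\wm_{t+1}\in[0,1]$).
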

\begin{proof}
  An immediate consequence of the above calculations; See Appendix~\ref{pf:thm1} for details.
\end{proof}

A noteworthy outcome of the above analysis is that the parameter choices in Theorem~\ref{thm:euclidean} \emph{exactly match} those of Nesterov's ``General Scheme for Optimal Method''~\citeyearpar[(2.2.1)]{nesterov2018lectures}. Hence, our approach actually recovers Nesterov's optimal method that encompasses both strongly and non-strongly convex costs, without resorting to the estimate sequence technique. 
Another important byproduct of our analysis is the convergence of $z_t$ to $x_*$ for $\mu>0$ (in which case, $\cc >0)$, a property proved via additional analysis in the literature (see e.g.,~\citep[Corollary 1]{gasnikov2018universal}). More importantly, this convergence plays a crucial role in the Riemannian setting. See \S\ref{sec:achieve}.

Observe that upon applying Theorem~\ref{thm:euclidean} recursively, we can deduce that 
\begin{align} \label{rate:conv}
  f(y_t)-f(x_*)  = \OO{(1-\cc_1)(1-\cc_{2})\cdots (1-\cc_{t})}.
\end{align}
Thus, to identify the convergence rate of iteration~\eqref{nesterov} with parameters chosen via Theorem~\ref{thm:euclidean}, we only need to study how the sequence $\{\cc_t\}$ evolves. This evolution is the focus of the next subsection.

\subsection{Identifying the convergence rate of \eqref{nesterov}: a simple analysis based on fixed-point iteration}
\label{sec:conv}
For studying the evolution of $\cc_t$, we consider the strongly convex case ($\mu>0$) and assume that $\sg_t$ is fixed to be a constant $\sg\in(0,2/L)$. This assumption is not stringent as most accelerated methods in the literature choose $\sg_t\equiv 1/L$.

Our approach below offers an alternative to its counterpart in Nesterov's book~\citeyearpar[Lemma 2.2.4]{nesterov2018lectures}: In contrast to Nesterov's analysis based on clever algebraic manipulations, our approach \emph{directly} analyzes the evolution of the sequence by drawing a connection to \emph{fixed point iterations}.
\begin{wrapfigure}{r}{0.3\textwidth}  
\centering
    \begin{tikzpicture}[scale=0.6]
\begin{axis}[
    xmin=0, xmax=1,
    ymin=0, ymax=1,
    ]
\addplot [
    domain=0:1, 
    samples=100, 
    color=red, 
    ultra thick
]
{x^2};
\addplot [
    domain=0:0.9, 
    samples=100, 
    color=blue, 
    ultra thick
]
{x*(x-0.25)/(1-x)};
\addplot[
    color=black 
    ]
    coordinates {
    (0.9,0.81)(0.6625,0.81)(0.6625,0.4389) (0.5748,0.4389) (0.5748, 0.3304)(0.5360,0.3304)(0.5360,0.2873)
    }; 
\end{axis}
\end{tikzpicture}
    \caption{\footnotesize{An illustration of the evolution of \eqref{recur:xi} for $2\mu\drop =0.25$.
    We plot $\cho=\frac{v(v-2\mu\drop )}{(1-v)}$  in \textcolor{blue}{blue} and  $\cht(v)=v^2$  in \textcolor{red}{red}.}}
    \label{fig:1}
\end{wrapfigure}
Moreover, this approach will generalize better to the more complicated Riemannian setting.
As a byproduct of our simple approach, we were able to remove the technical condition on $\cc_0$ required by Nesterov's analysis. See Remark~\ref{rmk:improve:nesterov}.

First, let us pinpoint the recursive relation satisfied by $\cc_t$.
Applying Theorem~\ref{thm:euclidean} for $t\leftarrow t-1$, we have $\frac{B_t}{A_t}=\frac{\cc_t^2}{4\drop}$. Hence, \eqref{eq:recur} implies the following \emph{nonlinear} recursive relation on $\cc_t$'s:
\begin{align}  \cc_{t+1}(\cc_{t+1}-2\mu\drop) /(1-\cc_{t+1}) = \cc_{t}^2\,. \label{recur:xi}
\end{align}
Now, our objective is to characterize the evolution of $\cc_t$ from \eqref{recur:xi}.
Intuitively, \eqref{recur:xi} can be construed as the recursive relation for computing the root of $\cho(v)=\cht(v)$, where $\cho(v):= \frac{v(v-2\mu\drop )}{(1-v)}$ and $\cht(v):=v^2$.
Since the root is equal to $v=\sqrt{2\mu\drop}$, one can guess that $\cc_t\to \sqrt{2\mu\drop}$. 
See Figure~\ref{fig:1} for illustration. 
The following lemma  confirms this guess.

\begin{lembox}[Evolution of \eqref{recur:xi}] \label{lem:conv}
  For an arbitrary initial value $\cc_0\geq 0$, let $\cc_t$ ($t\geq 1$) be the sequence of numbers defined as per \eqref{recur:xi}.
  Then, $\cc_t\in[2\mu\drop,1)$ for all $t\geq 1$. 
  Furthermore, if \\
  {\footnotesize $\begin{cases} \cc_0>\sqrt{2\mu\drop}\,,\\ \cc_0 = \sqrt{2\mu\drop}\,,\\ \cc_0<\sqrt{2\mu\drop}\,, \end{cases}$} then {\footnotesize $\begin{cases} \cc_t \searrow \sqrt{2\mu\drop} \text{ as $t\to \infty$}\,.\\ \cc_t \equiv~ \sqrt{2\mu\drop}\,. \\ \cc_t \nearrow \sqrt{2\mu\drop}\text{ as $t\to \infty$}\,. \end{cases}$} In particular, the convergences are \emph{geometric}.
\end{lembox}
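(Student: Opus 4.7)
My plan is to analyze the one-dimensional map $v \mapsto \cho^{-1}(v^2)$ directly and extract the geometric rate from a clean algebraic identity for the residual $r_t := \cc_t^2 - 2\mu\drop$. First, I would show that $\cho(v) := v(v - 2\mu\drop)/(1-v)$ is a continuous, strictly increasing bijection from $[2\mu\drop, 1)$ onto $[0, \infty)$; this is transparent from the partial-fraction form $\cho(v) = -(v + 1 - 2\mu\drop) + (1 - 2\mu\drop)/(1-v)$, which simultaneously shows $\cho$ is convex with $\cho(2\mu\drop) = 0$ and $\cho(v) \to +\infty$ as $v \to 1^-$. Since $\cc_t^2 \ge 0$ always, the recursion $\cho(\cc_{t+1}) = \cc_t^2$ uniquely determines $\cc_{t+1} = \cho^{-1}(\cc_t^2) \in [2\mu\drop, 1)$ for every $t \ge 0$, which settles the first assertion of the lemma.

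Next, I would identify the fixed points: $\cho(v) = v^2$ simplifies (after clearing $1-v$) to $v^3 = 2\mu\drop\,v$, so the unique positive root is $v = \sqrt{2\mu\drop}$. The direct factorization
\[\cho(v) - v^2 \;=\; \frac{v\,(v^2 - 2\mu\drop)}{1-v}\]
then pins down the sign pattern: $\cho(v) > v^2$ for $v > \sqrt{2\mu\drop}$ and $\cho(v) < v^2$ for $0 < v < \sqrt{2\mu\drop}$. Combined with the monotonicity of $\cho$, this shows that each of the three cases of the lemma is invariant under the iteration (the property $\cc_t \gtreqless \sqrt{2\mu\drop}$ is preserved), and that in Cases~1 and~3 the iterates are strictly monotone.

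The heart of the proof is a one-line identity for the residual. Rearranging $\cc_{t+1}(\cc_{t+1} - 2\mu\drop) = (1 - \cc_{t+1})\cc_t^2$ and adding and subtracting $2\mu\drop$ on the left yields
\[r_{t+1} \;=\; (1 - \cc_{t+1})\, r_t, \qquad r_t := \cc_t^2 - 2\mu\drop.\]
In Case~1 every $\cc_{t+1} > \sqrt{2\mu\drop}$, so $1 - \cc_{t+1} < 1 - \sqrt{2\mu\drop}$ and hence $|r_t| \le (1 - \sqrt{2\mu\drop})^t |r_0|$. In Case~3 the iterates satisfy $\cc_{t+1} \ge 2\mu\drop$ and approach $\sqrt{2\mu\drop}$ from below, giving at worst $|r_t| \le (1 - 2\mu\drop)^t |r_0|$. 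Converting back via $|\cc_t - \sqrt{2\mu\drop}| = |r_t|/(\cc_t + \sqrt{2\mu\drop}) \le |r_t|/\sqrt{2\mu\drop}$ delivers the geometric convergence claim in both nontrivial cases.

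The step I expect to be the main obstacle is \emph{guessing the right auxiliary quantity}. A generic Mean Value Theorem estimate on $\cho^{-1}$ yields only the local rate $1 - \sqrt{2\mu\drop}$ (the derivative of $v \mapsto \cho^{-1}(v^2)$ at the fixed point) and leaves the global behaviour opaque---particularly in Case~3, where far from the fixed point the effective contraction factor $1 - \cc_t$ can be close to $1$. The residual identity $r_{t+1} = (1 - \cc_{t+1})r_t$ effectively \emph{linearizes} the recursion in $r_t$, so the monotonicity statements and the uniform geometric rate both drop out together with no further algebraic gymnastics.
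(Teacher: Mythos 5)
Your residual identity $r_{t+1}=(1-\cc_{t+1})r_t$ with $r_t=\cc_t^2-2\mu\drop$ is correct (expanding $(1-\cc_{t+1})(\cc_t^2-2\mu\drop)$ with the recursion $\cc_{t+1}^2-2\mu\drop\cc_{t+1}=(1-\cc_{t+1})\cc_t^2$ indeed gives $\cc_{t+1}^2-2\mu\drop$), and the remaining steps (bijectivity of $\cho$ onto $[0,\infty)$, sign pattern of $\cho(v)-v^2$, invariance and monotonicity of the three regimes, and the conversion $|\cc_t-\sqrt{2\mu\drop}|=|r_t|/(\cc_t+\sqrt{2\mu\drop})$) all check out. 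So your proof is correct, but it follows a genuinely different route than the paper. The paper writes $\cc_{t+1}=\ctt(\cc_t)$ with $\ctt=\cho^{-1}\circ\cht$, computes $\ctt$ in closed form, and establishes a global derivative bound $\ctt'(v)\le 1-\tfrac{4}{5+\sqrt5}\,\cc_t<1$ (its Proposition on the derivative $\cde$), then invokes the mean value theorem; your approach sidesteps both the inverse-function computation and the derivative estimate entirely. Yours is shorter and even yields sharper rate constants here: $1-\sqrt{2\mu\drop}$ above the fixed point and $1-2\mu\drop$ below, versus the paper's uniform $1-\tfrac{8\mu\drop}{5+\sqrt5}$ (note $\tfrac{8}{5+\sqrt5}\approx 1.1<2$). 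What the paper's MVT argument buys in exchange is generality: it applies verbatim to the perturbed recursion $\cho(\cc_{t+1})=\cc_t^2/\ddr{}$ with $\ddr{}\ge1$ (Lemma~\ref{lem:appen}), which is the version actually needed in the Riemannian analysis and in Theorem~\ref{thm:main}. Your linearization relies on the coincidence that the fixed point $\cc(1)=\sqrt{2\mu\drop}$ satisfies $\cc(1)^2=2\mu\drop$, so the ``$-a\cc_{t+1}$'' term telescopes cleanly; for general $\ddr{}$ the fixed point $\cc(\ddr{})$ obeys a different quadratic and the analogous residual $\cc_t^2-\cc(\ddr{})^2$ no longer factors through $(1-\cc_{t+1})$. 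So you've found a strictly nicer proof for Lemma~\ref{lem:conv} as stated, but it does not obviously replace the argument needed for the quantitative Riemannian bound.
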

\begin{proof}
  The proof and the formal statement (Lemma~\ref{lem:appen}) are provided in Appendix~\ref{pf:conv}.
\end{proof}

Lemma~\ref{lem:conv} delivers the desired accelerated convergence rate:
\begin{corobox} \label{coro:accel}
If  $\cc_0\geq  \sqrt{2\mu\drop}$, then 
$f(y_t)-f(x_*) =O(\prod_{i=1}^t (1-\sqrt{2\mu\drop})) =O(\exp ( - t\sqrt{2\mu\drop}))$.
In particular,  setting $\sg =1/L$,  $f(y_t)-f(x_*) =O(\exp ( - t\sqrt{\nicefrac{\mu}{L}}))$.
\end{corobox}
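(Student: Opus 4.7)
The plan is to combine the convergence bound \eqref{rate:conv} that follows from unrolling Theorem~\ref{thm:euclidean} with the monotonicity information provided by Lemma~\ref{lem:conv} to obtain a clean geometric rate. The first step is to verify that under the hypothesis $\cc_0 \geq \sqrt{2\mu\drop}$, every term of the sequence satisfies $\cc_t \geq \sqrt{2\mu\drop}$. This is immediate from Lemma~\ref{lem:conv}: the case $\cc_0 = \sqrt{2\mu\drop}$ gives the constant sequence $\cc_t \equiv \sqrt{2\mu\drop}$, while the case $\cc_0 > \sqrt{2\mu\drop}$ gives a monotonically decreasing sequence with limit $\sqrt{2\mu\drop}$, so in both cases each $\cc_i$ dominates $\sqrt{2\mu\drop}$.

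Next, I would substitute this uniform lower bound into \eqref{rate:conv}. Since $1 - \cc_i \leq 1 - \sqrt{2\mu\drop}$ for each $i \geq 1$, we immediately obtain
\begin{equation*}
  f(y_t) - f(x_*) = \OO{\prod_{i=1}^t (1-\cc_i)} \leq \OO{(1-\sqrt{2\mu\drop})^t}.
\end{equation*}
Applying the elementary inequality $1 - x \leq e^{-x}$ (valid for all $x \in \mathbb{R}$) with $x = \sqrt{2\mu\drop}$ converts this into the exponential form $\OO{\exp(-t\sqrt{2\mu\drop})}$, which is the first claim.

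For the specialization $\sg = 1/L$, I would simply compute $\drop = \sg(1 - L\sg/2) = (1/L)(1 - 1/2) = 1/(2L)$, whence $2\mu\drop = \mu/L$ and $\sqrt{2\mu\drop} = \sqrt{\mu/L}$. Plugging this value of $\drop$ into the previous display yields $f(y_t) - f(x_*) = \OO{\exp(-t\sqrt{\mu/L})}$, the advertised accelerated rate.

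There is no real obstacle here: the work has all been front-loaded into Theorem~\ref{thm:euclidean} (which produced the product bound) and Lemma~\ref{lem:conv} (which controlled the evolution of $\cc_t$). The only minor subtlety worth flagging in the write-up is that the initial value $\cc_0$ only enters \eqref{rate:conv} implicitly through the constant hidden in the $O(\cdot)$ (namely $f(y_0) - f(x_*) + (B_0/A_0)\norm{z_0 - x_*}^2$), so the assumption $\cc_0 \geq \sqrt{2\mu\drop}$ is purely what is needed to keep $\cc_t$ away from $0$ for all $t \geq 1$; once that is secured, the corollary follows by the two-line argument above.
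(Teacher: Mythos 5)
Your proposal is correct and matches the paper's intended argument: the paper simply says "Lemma~\ref{lem:conv} delivers the desired accelerated convergence rate" and leaves precisely the two-step argument you spell out (monotonicity of $\cc_t$ from Lemma~\ref{lem:conv} giving the uniform lower bound $\cc_i \geq \sqrt{2\mu\drop}$, then substitution into \eqref{rate:conv} plus $1-x\le e^{-x}$) implicit. Your remark about $\cc_0$ entering only through the hidden constant in the $O(\cdot)$ is a correct and useful clarification.
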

\begin{remark}[Removing technical conditions in Nesterov's analysis] \label{rmk:improve:nesterov}
Nesterov's original analysis requires a technical condition on the initial value $\cc_0$: $\sqrt{\nicefrac{\mu}{L}}\leq \cc_0 \leq \nicefrac{(2(3+\mu/L))}{(3+\sqrt{21+4\mu/L})}$ \citep[(2.2.21)]{nesterov2018lectures}.
In contrast, our analysis reveals that the upper bound on $\cc_0$ is not needed; the lower bound is also not needed in the sense that $\cc_t$ converges to $\sqrt{\mu/L}$, the accelerated rate.
\end{remark}

\vspace*{-12pt}
\section{Generalization to the non-Euclidean case: Riemannian potential function analysis}
\label{sec:riem}
\vspace*{-4pt}
This section develops the first key ingredient towards obtaining our main theorem (Theorem~\ref{thm:main}), namely, Theorem~\ref{thm:riem} that is a Riemannian analogue of Theorem~\ref{thm:euclidean}. We begin by introducing some preliminaries and stating the Riemannian analogue of iteration~\eqref{nesterov}.

\vspace*{-4pt}
\subsection{Riemannian geometry and Riemannian analogue of Nesterov method}
\label{sec:riemnest}
\vspace*{-4pt}
We recall below some basic concepts from Riemannian geometry, and defer to textbooks (e.g., \citep{jost2008riemannian,burago2001course}) for an in depth introduction. A \emph{Riemannian manifold} is a smooth manifold $M$ equipped with a smoothly varying inner product $\inp{\cdot}{\cdot}_x$ (called the \emph{Riemannian metric}) defined for each $x \in M$ on the tangent space $T_xM$.
One can define geometric concepts such as angles, length of curves, and surface areas on a Riemannian manifold $M$. With the concept of length of curves, one can introduce a distances function $d$ on $M$ and consequently view $(M,d)$ as a metric space. Length also allows us to define analogues of straight lines, namely \emph{geodesics}: A curve is a \emph{geodesic} if it is locally distance minimizing. The notions of curvature can be also defined in $M$. We focus on \emph{sectional curvature}, which characterizes the notion of curvature by measuring Gaussian curvatures of $2$-dimensional submanifolds of $M$. We make the following key assumption on curvature:
\begin{assump} We assume that the sectional curvature is lower bounded by $-\kappa$ for some nonnegative constant $\kappa$. This is a widely used standard assumption in Riemannian geometry; for a textbook treatment dedicated to this assumption see e.g.,~\citep[Chapter 10]{burago2001course}.
\end{assump}

\noindent\emph{Operations on Riemannian manifolds.} One can define analogues of vector addition and subtraction in Riemannian manifolds via the concept of \emph{exponential map}. 
 An exponential map $\expm_x: T_x M \to M$ maps $v\in T_x M$ to $g(1)\in M$ for a geodesic $g$ with $g(0)=x$ and $g'(0)=v$. We remark that it is always well-defined locally.
Notice that $\exm{x}{v} \in M$ is an analogue of vector addition ``$x+v$.'' Similarly, the inverse map $\lm{x}{y} \in T_x M$ is a non-Euclidean analogue of vector subtraction ``$y-x$.''  For $\expm_x^{-1}$ to be well-defined for each $x$, we assume that any two points on $M$ are connected by a unique geodesic. 
This property is called \emph{uniquely geodesic}, and it is valid  locally for general Riemannian manifolds and globally for Hadamard manifolds (Riemannian manifolds with global non-positive sectional curvatures).  
We assume further that $\expm_x,\expm_x^{-1}$ can be comptuted at each $x$, which is in fact the case for many widely used matrix manifolds~\citep{absil2009optimization}.   
 
\noindent\emph{Convexity on Riemannian manifolds.} The notion of convexity can be extended to Riemannian manifolds using geodesics where a convex combination of two points is defined along the geodesic connecting them. This generalized notion of convexity is called \emph{geodesic convexity} (\emph{g-convexity} for short)~\citep{gromov1978manifolds}.  In particular, one can define geodesic-smoothness and (strong) g-convexity for a function $f:M\to \re$ akin to their Euclidean counterparts.\\[-14pt]
\begin{assump}
We assume that the cost function $f$ is geodesically $L$-smooth and $\mu$-strongly convex (formal definitions in Appendix~\ref{pf:thm2}; see also \citep[Section 2.3]{zhang2016first}).\vspace*{-4pt}
\end{assump}
Now we are ready to generalize Nesterov's method~\eqref{nesterov} to Riemannian manifolds. 
Using the above noted Riemannian analogues of vector addition and subtractions, \eqref{nesterov} becomes
\begin{subequations} \label{r:nesterov}
	\begin{align}
	    x_{t+1} &\leftarrow \exm{y_t}{\wg_{t+1}\lm{y_t}{z_t}}\label{r:nest:0}\\
		y_{t+1} &\leftarrow  \exm{x_{t+1}}{-\sg_{t+1} \nabla f(x_{t+1})}  \label{r:nest:1} \\
		z_{t+1} &\leftarrow \mathrm{Exp}_{x_{t+1}}\bigl(\wm_{t+1} \lm{x_{t+1}}{z_t}  - \sm_{t+1} \nabla f(x_{t+1})\bigr)\,.\label{r:nest:2}
	\end{align}
\end{subequations}
Note that updates~\eqref{r:nest:1} and \eqref{r:nest:2} are well-defined since $\nabla f(x_{t+1})$ lies in the tangent space $T_x M$. Having formalized the Riemannian version of~\eqref{nesterov}, we study its potential function analysis.

\vspace*{-4pt}
\subsection{Riemannian potential function analysis and metric distortion} 
\label{sec:riempoten}
\vspace*{-4pt}
Since iteration~\eqref{r:nesterov} is a direct analog of its Euclidean counterpart~\eqref{nesterov}, one may be tempted to study the potential function
$\Psi_t:= A_t\cdot (f(y)-f(x_*))+B_t\cdot d(z_t,x_*)^2$
that is a direct analog of~\eqref{def:poten}. 
However, it turns out that the following less direct potential function is much more advantageous:\\[-14pt]
\begin{align}
  \Psi_t := A_t\cdot \left( f(y_t) -f(x_*)\right) + B_t \cdot  \norm{\lm{x_t}{z_t}-\lm{x_t}{x_*}}^2\,.\label{r:def:poten}
  \vspace*{-4pt}
\end{align}%
The distance term in~\eqref{r:def:poten} is preferable to $d(z_t,x_*)^2$ because it lets us use Euclidean geometry (since it is defined on the tangent space $\T_{x_t} M \cong \re^n$) to control it. To simplify notation, we define: 
\begin{definition}[Projected distance] \label{def:pd}
  For any three points $u,v,w \in M$, the projected distance between $v$ and $w$ relative to $u$ is defined as $\DD{u}{v}{w}:= \norm{\lm{u}{v}-\lm{u}{w}}$.
\end{definition}

However, there is one fundamental hurdle \emph{inherent} to comparing distances in the Riemannian setting: we need to handle the \emph{incompatibility} of metrics between two different points.
A key advantage of the potential function analysis is that one only needs to focus on comparing the distances appearing in \emph{adjacent} terms, namely $\Psi_t$ and $\Psi_{t+1}$, which simplifies the argument considerably. Motivated by~\eqref{r:def:poten}, we define the following quantity for comparing distances:
\begin{definition}[Valid distortion rate] 
\label{def:val}
We say $\ddr{t}$ is a \textit{valid distortion rate} at  iteration $t\geq 1$ if the following inequality holds:  $\DD{x_{t}}{z_{t-1}}{x_*}^2 \leq \ddr{t}\cdot \DD{x_{t-1}}{z_{t-1}}{x_*}^2$.
\end{definition}
Assuming the existence of valid distortion rates at \emph{each} iteration, we can analyze iteration~\eqref{r:nesterov} analogously to the analysis in~\S\ref{sec:poten} and \S\ref{sec:ensure}, to obtain the main result of this section:

\begin{thmbox}[Riemannian analogue of Theorem~\ref{thm:euclidean}] \label{thm:riem}
  Assume that $\ddr{t}$ is a valid distortion rate at iteration $t$.
  For $\sg_{t+1}\in(0,2/L)$, $\drop:= \sg_{t+1}(1-L\sg_{t+1}/2)$, and given $y_t,z_t$ and $A_t,B_t>0$,
  \begin{enumerate}
  \item Compute $\cc_{t+1} \in [2\mu \drop, 1)$ satisfying
    $ \frac{\cc_{t+1}(\cc_{t+1}-2\mu\drop) }{1-\cc_{t+1}} =\frac{ 4\drop}{\pmb{\ddr{t+1}}}\cdot \frac{B_t}{A_t}\,. \eqnum\label{r:eq:recur}$
  \item Compute $A_{t+1},B_{t+1},\wg_{t+1},\wm_{t+1},\sm_{t+1}$  as in Theorem~\ref{thm:euclidean}.
  \end{enumerate} 
  Then, $y_{t+1},z_{t+1}$ generated via iteration~\eqref{r:nesterov} satisfy $\Psi_{t+1}\leq \Psi_t$ (see \eqref{r:def:poten}). In other words,  
  $$f(y_{t+1})-f(x_{*}) +\tfrac{\cc_{t+1}^2}{4\drop}\cdot\DD{x_{t+1}}{z_{t+1}}{x_*}^2 \leq (1-\cc_{t+1})\cdot \bigl[ f(y_t)-f(x_*)+\tfrac{B_t}{A_t}\cdot \DD{x_{t}}{z_{t}}{x_*}^2\bigr]\,.$$
\end{thmbox}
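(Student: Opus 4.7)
My plan is to mimic the Euclidean derivation of Theorem~\ref{thm:euclidean} verbatim, carrying out every calculation inside the tangent space $T_{x_{t+1}}M$ (where Euclidean linear algebra is available), and introducing the factor $\ddr{t+1}$ precisely at the one place where metric-incompatibility enters. Concretely, I would set
\begin{equation*}
\nabla := \nabla f(x_{t+1})\,,\quad X := -\lm{x_{t+1}}{x_*}\,,\quad W := \lm{x_{t+1}}{z_t}\,,
\end{equation*}
all three living in $T_{x_{t+1}}M$. Then I would split $\Psi_{t+1}-\Psi_t$ into the cost-piece \eqref{eq:gd} and the distance-piece \eqref{eq:md} and bound each as in \S\ref{sec:poten}.

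For the cost-piece I would use the g-smooth analogue of Proposition~\ref{folk:grad} applied to \eqref{r:nest:1} to get $f(y_{t+1})-f(x_{t+1})\le -\drop\|\nabla\|^2$, and then g-strong-convexity to bound $f(x_{t+1})-f(x_*)$ and $f(y_t)-f(x_{t+1})$ in terms of $\nabla, X, W$. The second of these requires expressing $\lm{x_{t+1}}{y_t}$ through $W$; since \eqref{r:nest:0} places $x_{t+1}$ on the geodesic from $y_t$ to $z_t$ at parameter $\wg$, the three points are collinear on this geodesic, yielding the Euclidean-like identity $\lm{x_{t+1}}{y_t}=-\tfrac{\wg}{1-\wg}W$. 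For the distance-piece I would exploit the crucial fact that the update \eqref{r:nest:2} is defined in the tangent space $T_{x_{t+1}}M$, so directly $\lm{x_{t+1}}{z_{t+1}}=\wm W -\sm\nabla$, and consequently $\DD{x_{t+1}}{z_{t+1}}{x_*}^2 = \|\wm W + X -\sm\nabla\|^2$, which is the Riemannian analogue of Proposition~\ref{folk :mirror}.

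The single place where non-Euclidean structure has to be handled is the \emph{old} distance term $-B_t\DD{x_t}{z_t}{x_*}^2$, which lives in $T_{x_t}M$ and therefore is not directly comparable to the quantities above. This is precisely the purpose of the valid distortion rate: by Definition~\ref{def:val},
\begin{equation*}
-B_t\DD{x_t}{z_t}{x_*}^2 \leq -\tfrac{B_t}{\ddr{t+1}}\DD{x_{t+1}}{z_t}{x_*}^2 = -\tfrac{B_t}{\ddr{t+1}}\|W+X\|^2\,.
\end{equation*}
Combining all bounds, I obtain exactly the same quadratic form as \eqref{upper:1}, except that every occurrence of $B_t$ inside the coefficients $C_1, C_2, C_4$ is replaced by $B_t/\ddr{t+1}$ (the other coefficients $C_3,C_5,C_6$ are unchanged because the ``old distance'' term does not enter them). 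I expect this substitution step to be the main conceptual content; the rest is bookkeeping.

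Having arrived at this modified $-$SoS problem, I would repeat the strategy of \S\ref{sec:ensure}: zero out the cross-term coefficients $C_4, C_5, C_6$ and the square-term coefficient $C_3$. Zeroing $C_3, C_6$ leaves $\sm$ and the analogue of \eqref{b:from:a} untouched, while zeroing $C_4, C_5$ with $B_t\mapsto B_t/\ddr{t+1}$ leads, after setting $C_2=0$ as well, to the modified recursive relation $\cc_{t+1}(\cc_{t+1}-2\mu\drop)/(1-\cc_{t+1}) = \tfrac{4\drop}{\ddr{t+1}}\cdot B_t/A_t$, which is precisely \eqref{r:eq:recur}. Finally I would verify (mirroring the check in the Euclidean proof) that this choice also forces $C_1\le 0$ via $\wm^2 B_{t+1}\le B_t/\ddr{t+1}$, and would check by direct substitution, using $A_{t+1}=A_t/(1-\cc_{t+1})$ and $B_{t+1}=\cc_{t+1}^2 A_t/(4\drop(1-\cc_{t+1}))$, that the closed forms $\wg_{t+1}=(\cc_{t+1}-2\mu\drop)/(1-2\mu\drop)$, $\wm_{t+1}=1-2\mu\drop\,\cc_{t+1}^{-1}$, $\sm_{t+1}=2\drop\,\cc_{t+1}^{-1}$ come out identical to the Euclidean case -- a pleasant consequence of the fact that the distortion factor $\ddr{t+1}$ has been absorbed entirely into the definition of $\cc_{t+1}$.
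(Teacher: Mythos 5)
Your proposal follows essentially the same path as the paper's proof in Appendix C.5: define $\widetilde{W},\widetilde{X},\widetilde{\nabla}$ in $T_{x_{t+1}}M$, invoke Riemannian analogues of the gradient/mirror-descent propositions, insert the distortion via $-B_t\DD{x_t}{z_t}{x_*}^2\leq-\tfrac{B_t}{\ddr{t+1}}\DD{x_{t+1}}{z_t}{x_*}^2$ (yielding $B_t\mapsto B_t/\ddr{t+1}$ in precisely the coefficients $C_1,C_2,C_4$), and then rerun the Euclidean ``$-$SoS'' bookkeeping, observing that the closed-form parameters $\wg,\wm,\sm$ absorb $\ddr{t+1}$ entirely into $\cc_{t+1}$. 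The collinearity identity $\lm{x_{t+1}}{y_t}=-\tfrac{\wg}{1-\wg}W$ you supply, and the tangent-space identity $\lm{x_{t+1}}{z_{t+1}}=\wm W-\sm\nabla$, are exactly the ingredients needed; this is correct.
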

\begin{proof}
The proof resembles the arguments in \S\ref{sec:poten} and \S\ref{sec:ensure}, modulo the appearance of valid distortion rates in \eqref{r:eq:recur}. See Appendix~\ref{pf:thm2} for details.
\end{proof}
As before, we can deduce from Theorem~\ref{thm:riem} the suboptimality gap bound~\eqref{rate:conv}.
Hence, to identify the convergence rate we only need to determine the evolution of $\{\cc_t\}$. We provide an illustrative example below, before moving onto the full accelerated algorithm in \S\ref{sec:riemaccel}. 

\paragraph{Illustrative example: constant distortion rate.} Consider the simplified case where $\ddr{t}\equiv \ddr{}\geq 1$ for all $t\geq 0$.
Under this constant distortion condition, similarly to recursion~\eqref{recur:xi}, we can obtain a recursive relation on $\{\cc_t\}$ by choosing $\sg_t\equiv\sg\in(0,2/L)$:\\[-10pt]
  \begin{equation}\label{r:recur:xi}
      \cc_{t+1}(\cc_{t+1}-2\mu\drop)/(1-\cc_{t+1}) = \cc_{t}^2/\pmb{\ddr{}} \,. 
   \vspace*{-2pt}
 \end{equation}
Here the only difference relative to \eqref{recur:xi} is now the RHS is divided by $\ddr{}$. 
Analogously to Lemma~\ref{lem:conv}, one can establish geometric convergence of $\cc_t$ to the fixed point $\cc(\ddr{})$ of equation~\eqref{r:recur:xi} (see  Lemma~\ref{lem:appen}).
Solving for $\cc(\ddr{})$ explicitly, we obtain the following analogue of Corollary~\ref{coro:accel}: 
\begin{corobox} \label{r:coro:accel}
If  $\cc_0\geq  \cc(\ddr{}):=\nicefrac{\sqrt{(\ddr{}-1)^2+8\ddr{}\mu\drop }}{2}-\nicefrac{(\ddr{}-1)}{2}$, then the following convergence rate holds: $f(y_t)-f(x_*) =O\big(\prod_{i=1}^t (1-\cc(\ddr{}))\big) =O\big(\exp (- t\cdot \cc(\ddr{}))\big)$.
In particular,  setting $\sg =1/L$,  $f(y_t)-f(x_*) =O\bigl((\exp \bigl(- \frac{t}{2}\{  \sqrt{(\ddr{}-1)^2+4\ddr{}\nicefrac{\mu}{L} } -\frac{t}{2}(\ddr{}-1)\}  \bigr)\bigr)$.
\end{corobox}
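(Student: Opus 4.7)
The plan is to mimic the Euclidean analysis in \S\ref{sec:conv} with the modified recursion~\eqref{r:recur:xi} in hand. First, iterating Theorem~\ref{thm:riem} telescopically along $t=1,\ldots,T$ yields the Riemannian analogue of~\eqref{rate:conv}:
\begin{equation*}
  f(y_T) - f(x_*) \le \Bigl(\prod_{i=1}^{T}(1-\cc_i)\Bigr) \cdot \Bigl[f(y_0)-f(x_*) + \tfrac{B_0}{A_0}\DD{x_0}{z_0}{x_*}^2\Bigr],
\end{equation*}
so the whole problem reduces to a lower bound on the multipliers $\cc_t$. Since $\ddr{t}\equiv \ddr{}$ and $\sg_t\equiv\sg$ are both constant, and since the previous iteration's Step (2) in Theorem~\ref{thm:riem} gives $B_t/A_t=\cc_t^2/(4\drop)$, substituting into~\eqref{r:eq:recur} produces exactly the scalar recursion~\eqref{r:recur:xi}.

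Next I would identify the fixed point of~\eqref{r:recur:xi}. Setting $\cc_{t+1}=\cc_t=\cc$ and canceling one factor of $\cc$ produces the quadratic $\cc^2 + (\ddr{}-1)\cc - 2\ddr{}\mu\drop = 0$, whose unique positive root is the claimed
\begin{equation*}
  \cc(\ddr{}) \;=\; \tfrac{1}{2}\sqrt{(\ddr{}-1)^2+8\ddr{}\mu\drop}-\tfrac{\ddr{}-1}{2}.
\end{equation*}

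The main step is then the monotone-convergence analogue of Lemma~\ref{lem:conv}: graph $\cho(v):=v(v-2\mu\drop)/(1-v)$ against $\cht(v):=v^2/\ddr{}$ on $[2\mu\drop,1)$. The function $\cho$ is strictly increasing from $0$ to $+\infty$, while $\cht$ is strictly increasing and bounded by $1/\ddr{}$; they intersect exactly once, precisely at $\cc(\ddr{})$. A cobweb argument entirely analogous to Figure~\ref{fig:1} then shows that for $\cc_0\ge \cc(\ddr{})$ the iterates satisfy $\cc_t\searrow \cc(\ddr{})$; the one-step step $\cc_t > \cc(\ddr{}) \Longrightarrow \cc(\ddr{}) < \cc_{t+1} < \cc_t$ reduces to checking the sign of the fixed-point quadratic $v^2+(\ddr{}-1)v-2\ddr{}\mu\drop$ at $v=\cc_t$, which is positive precisely for $v>\cc(\ddr{})$. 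This is the content of Lemma~\ref{lem:appen} in the appendix. In particular $\cc_t \ge \cc(\ddr{})$ for all $t\ge 1$.

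Combining the ingredients gives $\prod_{i=1}^{t}(1-\cc_i) \le (1-\cc(\ddr{}))^t \le \exp(-t\,\cc(\ddr{}))$ via $1-x\le e^{-x}$, which is the first claim. For the specialization, $\sg = 1/L$ gives $\drop=1/(2L)$, hence $2\mu\drop=\mu/L$ and $8\ddr{}\mu\drop=4\ddr{}\mu/L$; substituting into the closed form for $\cc(\ddr{})$ and pulling a factor $\tfrac{1}{2}$ out of the exponent yields the stated bound. The only genuinely nontrivial step is the fixed-point convergence argument, since the recursion differs from its Euclidean sibling~\eqref{recur:xi} only by the constant $\ddr{}$ in the denominator of $\cht$, so all other manipulations are routine algebra that parallel \S\ref{sec:conv} line by line.
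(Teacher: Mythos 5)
Your proof is correct and follows essentially the same route the paper takes: telescope Theorem~\ref{thm:riem} to get $f(y_t)-f(x_*)=O\bigl(\prod_{i=1}^t(1-\cc_i)\bigr)$, reduce to the scalar recursion~\eqref{r:recur:xi}, identify the unique fixed point $\cc(\ddr{})$ by solving the quadratic $\cc^2+(\ddr{}-1)\cc-2\ddr{}\mu\drop=0$, and invoke monotone convergence of $\cc_t$ to $\cc(\ddr{})$ (Lemma~\ref{lem:appen}) to lower-bound each $\cc_i$ by $\cc(\ddr{})$. The only minor difference is one of emphasis: the paper's Lemma~\ref{lem:appen} establishes geometric convergence of $\cc_t$ via a mean-value derivative bound, whereas your cobweb/sign argument is weaker but fully sufficient here since only the one-sided monotonicity $\cc_t\geq\cc(\ddr{})$ is needed to conclude $\prod_{i=1}^t(1-\cc_i)\leq(1-\cc(\ddr{}))^t\leq e^{-t\,\cc(\ddr{})}$.
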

A notable aspect of Corollary~\ref{r:coro:accel} is that it characterizes a trade-off between the metric distortion and the convergence rate of the resulting algorithm. This point is elaborated by the following remark:
\begin{remark}[Properties of $\cc(\ddr{})$] \label{rmk:constant}
When there is no distortion, i.e., $\ddr{}=1$., then $\cc(1)=\sqrt{2\mu\drop}$ since \eqref{r:recur:xi} becomes \eqref{recur:xi}. Moreover, one can verify that $\cc(\ddr{})$ is (strictly) decreasing in $\ddr{}$, implying that the algorithm's performance gets worse as the distortion gets severer (see Appendix~\ref{appen:just} for verification). Hence, $\cc(\ddr{})>\lim_{\ddr{}\to \infty}\cc(\ddr{}) =2\mu\drop$ for all $\ddr{}>1$, implying that the convergence rate  is always \textbf{\emph{strictly}} better than gradient descent no matter how severe the distortion is. 
\end{remark}

Interestingly, our  illustrative analysis  already recovers the local acceleration result of \citet{zhang2018estimate}. More specifically, they showed that if the distance between initial iterates and the global optimum is bounded by $\nicefrac{1}{20}\cdot\kappa^{1/2}(\nicefrac{L}{\mu})^{-3/4}$ (see Appendix F therein), then the distortion is bounded by $\ddr{}=1+\nicefrac{1}{5}\cdot (\nicefrac{L}{\mu})^{-1/2}$. Simplifying the value $\cc(\ddr{})$ for this choice of $\ddr{}$, we obtain the following strengthening of their main result~\citep[Theorem 3]{zhang2018estimate}:
\begin{corobox}[Local acceleration]
  Let $\ddr{}=1+\frac{1}{5}\cdot (\nicefrac{\mu}{L})^{1/2}$,  $\sg =1/L$ and $\cc_0 \geq\cc(\ddr{})$.
  Then, assuming $\dd{x_0}{x_*}\leq \frac{1}{20}\cdot\kappa^{1/2}(\nicefrac{\mu}{L})^{3/4}$,
  we have $f(y_t)-f(x_*)  =O(\exp ( -  \frac{9}{10}t \sqrt{\nicefrac{\mu}{L}}))$.
  In particular,  $\cc_t= \cc(\ddr{})$ for all $t\geq0$, recovers \citep[Algorithm 2]{zhang2018estimate}.
\end{corobox}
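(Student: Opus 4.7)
The plan is to specialize the constant-distortion Corollary~\ref{r:coro:accel} to the value $\ddr{} = 1 + \frac{1}{5}\sqrt{\mu/L}$. Two things must be verified: (i) this $\ddr{}$ is a valid distortion rate at every iteration $t \geq 1$; and (ii) the fixed point $\cc(\ddr{})$ entering the convergence rate satisfies $\cc(\ddr{}) \geq \frac{9}{10}\sqrt{\mu/L}$.

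For (ii), which is just algebra, substitute $\ddr{} - 1 = \frac{1}{5}\sqrt{\mu/L}$ and $\mu\drop = \mu/(2L)$ (since $\sg = 1/L$ gives $\drop = 1/(2L)$) into the closed form $\cc(\ddr{}) = \frac{1}{2}\bigl(\sqrt{(\ddr{}-1)^2 + 8\ddr{}\mu\drop} - (\ddr{}-1)\bigr)$. Writing $\epsilon := \frac{1}{5}\sqrt{\mu/L}$ so that $\mu/L = 25\epsilon^2$, the radicand simplifies to $\epsilon^2(101 + 100\epsilon)$, whence
\[
  \cc(\ddr{}) = \frac{\epsilon}{2}\bigl(\sqrt{101 + 100\epsilon} - 1\bigr) \geq \frac{\epsilon}{2}\bigl(\sqrt{101} - 1\bigr) \geq \frac{9\epsilon}{2} = \frac{9}{10}\sqrt{\mu/L},
\]
as an unconditional inequality (since $\sqrt{101} \geq 10$). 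Plugging this into Corollary~\ref{r:coro:accel} yields the claimed $O(\exp(-\frac{9}{10}t\sqrt{\mu/L}))$ decay.

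For (i), I would proceed by induction on $t$, maintaining the invariant that $x_t, y_t, z_t$ all lie in the ball $\mathcal{B} := B(x_*, R)$ with $R = \frac{1}{20}\kappa^{1/2}(\mu/L)^{3/4}$. The base case is the hypothesis $d(x_0, x_*) \leq R$ (together with the convention $y_0 = z_0 = x_0$). For the inductive step, invoke the metric-distortion lemma of \citep[Appendix F]{zhang2018estimate}, which asserts that $\ddr{} = 1 + \frac{1}{5}\sqrt{\mu/L}$ is a valid distortion rate at iteration $t+1$ whenever $x_t, x_{t+1}, z_t \in \mathcal{B}$. The induction hypothesis supplies this, so Theorem~\ref{thm:riem} gives $\Psi_{t+1} \leq \Psi_t \leq \Psi_0$. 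Strong g-convexity then yields $\frac{\mu}{2} d(y_{t+1}, x_*)^2 \leq f(y_{t+1}) - f(x_*) \leq \Psi_0 / A_{t+1}$, and a standard Toponogov-type comparison (valid under the sectional curvature lower bound $-\kappa$) converts the tangent-space bound $\DD{x_{t+1}}{z_{t+1}}{x_*}^2 \leq \Psi_0 / B_{t+1}$ into a bound on $d(z_{t+1}, x_*)$. Because $x_{t+2}$ lies on the geodesic between $y_{t+1}$ and $z_{t+1}$ by update~\eqref{r:nest:0}, and balls of sufficiently small radius are g-convex under the curvature bound, $x_{t+2} \in \mathcal{B}$ as well, closing the induction. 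The constant $\frac{1}{20}$ is calibrated to absorb the cumulative losses from the strong-convexity and Toponogov constants.

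The main obstacle will be the bookkeeping in step (i): tracking constants through the strong-convexity bound and the curvature-comparison inequality, and verifying that they cumulatively fit within the radius budget allotted by $R$. Step (ii) reduces to the clean algebraic computation shown above, and the final sentence of the corollary is immediate: if $\cc_0 = \cc(\ddr{})$, then $\cc_0$ is the fixed point of the recursion~\eqref{r:recur:xi}, so $\cc_t \equiv \cc(\ddr{})$ throughout, producing constant parameters $\wg_{t+1}, \wm_{t+1}, \sm_{t+1}$ identical to those prescribed in \citep[Algorithm 2]{zhang2018estimate}.
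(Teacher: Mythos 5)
Your proposal is correct and follows the same route the paper takes: specialize Corollary~\ref{r:coro:accel} to the constant distortion $\ddr{}$, import Zhang and Sra's Appendix~F result to justify that this $\ddr{}$ is valid under the local initial condition, and then lower-bound $\cc(\ddr{})$. The algebraic reduction in step~(ii) is exactly right: with $\sg=1/L$ one gets $\drop=1/(2L)$, hence $2\mu\drop=\mu/L=25\eps^2$ for $\eps=\frac{1}{5}\sqrt{\mu/L}$, the radicand collapses to $\eps^2(101+100\eps)$, and $\sqrt{101}\geq 10$ gives $\cc(\ddr{})\geq\frac{9\eps}{2}=\frac{9}{10}\sqrt{\mu/L}$ unconditionally. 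The paper merely says "simplifying the value $\cc(\ddr{})$," so your explicit calculation is a welcome fill-in. For step~(i), the paper does not re-derive the locality argument; it cites \citep[Appendix~F]{zhang2018estimate} as a packaged implication (initial-distance bound $\Rightarrow$ constant distortion throughout). Your inductive sketch (iterates stay in the ball $B(x_*,R)$ via the potential decrease of Theorem~\ref{thm:riem}, g-convexity of small balls, and Toponogov/strong-convexity to convert potential bounds into distance bounds) is the internal structure that any such proof would have, and is consistent with Zhang--Sra's own argument; just note that the induction hypothesis also needs to be stated so that $x_{t+1}\in\mathcal{B}$ is available (via g-convexity of $\mathcal{B}$ and $y_t,z_t\in\mathcal{B}$) before the Appendix~F lemma can be applied at step $t+1$, which is the same bookkeeping you already perform for $x_{t+2}$. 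The closing observation about the fixed-point choice $\cc_0=\cc(\ddr{})$ recovering their Algorithm~2 is also correct.
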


\vspace*{-12pt}	
\section{Riemannian Accelerated Gradient Method}
\label{sec:riemaccel}
Thus far, the analysis assumed existence of valid distortion rates. Now the question is: \emph{are valid distortion rates available to the method?} We provide a positive answer below and therewith propose a new Riemannian accelerated gradient method. For clarity, we will focus on Hadamard manifolds;  the development for non-Hadamard manifolds is analogous and is provided in Appendix~\ref{sec:nonhada}.

\vspace*{-8pt}	
\subsection{Valid distortion rates and Riemannian accelerated gradient method} \label{sec:valid} 
\vspace*{-3pt}	
We estimate metric distortion by first invoking a classical comparison theorem of~\citet{rauch1951contribution}.
\begin{proposition}\label{prop:rauch}
  Let $x,y,z \in M$, a Riemannian manifold with curvature lower bounded by $-\kappa<0$. Let $S_\kappa(r):=\bigl(\frac{\sinh(\sqrt{\kappa}r)}{\sqrt{\kappa}r}\bigr)^2$; then, we have
  $\dd{y}{z}^2 \leq S_\kappa(\max\{\dd{x}{y},\dd{x}{z} \}) \cdot \DD{x}{y}{z}^2$.
\end{proposition}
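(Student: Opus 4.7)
The plan is to bound $\dd{y}{z}$ by the length of the smooth curve $c(s) := \exm{x}{v_s}$, $s \in [0,1]$, obtained by linearly interpolating in the tangent space: $v_s := (1-s)\lm{x}{y} + s\lm{x}{z}$. Since $c(0)=y$ and $c(1)=z$, we have $\dd{y}{z} \leq \int_0^1 \norm{c'(s)}\, ds$. For each fixed $s$, the map $\gamma_s(t) := \exm{x}{t v_s}$ is a geodesic from $x$ passing through $c(s)$ at time $t=1$, and the variation vector field $J_s(t) := \partial_s \gamma_s(t)$ is a Jacobi field along $\gamma_s$ with $J_s(0)=0$ and $J_s'(0) = \lm{x}{z}-\lm{x}{y}$; in particular, $c'(s) = J_s(1)$ and $\norm{J_s'(0)} = \DD{x}{y}{z}$.

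The next step is to bound $\norm{J_s(1)}$ via Rauch's comparison theorem. Since the sectional curvature of $M$ is bounded below by $-\kappa$, Rauch's theorem implies that Jacobi fields vanishing at $t=0$ in $M$ grow in norm no faster than the corresponding Jacobi fields in the model space of constant curvature $-\kappa$. A mildly technical point is that Rauch's theorem is typically stated for Jacobi fields whose initial derivative is \emph{perpendicular} to $\dot\gamma_s(0) = v_s$, so I would decompose $J_s'(0) = w_\parallel + w_\perp$ with respect to $v_s$. The parallel piece satisfies $J_{s,\parallel}'' = 0$ in any Riemannian manifold (since $R(\dot\gamma,\dot\gamma)\dot\gamma = 0$), so it grows linearly and contributes exactly $\norm{w_\parallel}^2$ at $t=1$; the perpendicular piece is the one controlled by Rauch. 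Solving the Jacobi equation $J_{s,\perp}'' - \kappa\,\norm{v_s}^2 J_{s,\perp} = 0$ explicitly in the constant-curvature model yields
\[\norm{J_s(1)}^2 \le \frac{\sinh^2(\sqrt{\kappa}\,\norm{v_s})}{\kappa\,\norm{v_s}^2}\,\norm{w_\perp}^2 + \norm{w_\parallel}^2 \le S_\kappa(\norm{v_s})\cdot \DD{x}{y}{z}^2,\]
where the last step absorbs the parallel contribution using $S_\kappa(r) \geq 1$ (which holds since $\sinh(r) \geq r$ for $r \geq 0$).

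Finally, $S_\kappa$ is monotonically increasing in its argument, and by convexity of the norm $\norm{v_s} \leq \max\{\norm{v_0},\norm{v_1}\} = \max\{\dd{x}{y},\dd{x}{z}\}$. Plugging this into the length integral gives
\[\dd{y}{z} \leq \int_0^1 \norm{J_s(1)}\, ds \leq \sqrt{S_\kappa(\max\{\dd{x}{y},\dd{x}{z}\})}\cdot \DD{x}{y}{z},\]
which is the claimed inequality upon squaring. The main obstacle I expect is keeping the direction of Rauch's comparison straight---a \emph{lower} bound on sectional curvature yields an \emph{upper} bound on Jacobi-field norms---and cleanly handling the non-perpendicular component of $J_s'(0)$; isolating the parallel part, which grows linearly in any manifold, resolves the latter. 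No conjugate-point obstruction arises because the model space of constant negative curvature is free of conjugate points, so Rauch's comparison is valid on all of $[0,1]$.
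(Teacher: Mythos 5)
Your proof is correct and takes essentially the same route as the paper: both bound $\dd{y}{z}$ by the length of the image under $\expm_x$ of the linear interpolation $v_s=(1-s)\lm{x}{y}+s\lm{x}{z}$ in $T_xM$, control the stretching of $\expm_x$ via the Rauch comparison theorem (splitting into radial and perpendicular components), and use $\norm{v_s}\leq\max\{\dd{x}{y},\dd{x}{z}\}$ with monotonicity of $S_\kappa$. Your Jacobi-field exposition is simply the unpacked version of the operator-norm bound $\onorm{d(\expm_x)_u}\leq\sinh(\sqrt{\kappa}\norm{u})/(\sqrt{\kappa}\norm{u})$ that the paper isolates as a separate proposition before applying it to the length integral.
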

\begin{proof}
   A direct consequence of the Rauch comparison theorem; see Appendix~\ref{appen:geo} for completeness. 
\end{proof}
Proposition~\ref{prop:rauch} applied to $x_t$,$z_t$,$x_*$ yields $\dd{z_t}{x_*}^2\leq S_\kappa(\max\{ \dd{x_t}{z_t},~\dd{x_t}{x_*} \} )\cdot \DD{x_t}{z_t}{z_*}^2$. Next, from Topogonov's comparison theorem (see e.g.~\citep[Section 6.5]{burago2001course}) we know that  $\DD{x_{t+1}}{z_t}{x_*}\leq \dd{z_t}{x_*}$ since $M$ is a Hadamard manifold.
Combining these two inequalities, we see that  $\delta_{t}=S_\kappa(\max\{ \dd{x_t}{z_t},~\dd{x_t}{x_*} \} )$ is a valid distortion rate. However, the issue with this argument is that the distortion rate depends on $d(x_t,x_*)$, which is in general unavailable to the algorithm. We overcome this fundamental difficulty by developing a new distortion inequality.
\begin{lembox}[Improved metric distortion inequality] \label{lem:betterdist}
  Let $x,y,z$ be points on Riemannian manifold $M$ with sectional curvatures lower bounded by $-\kappa<0$.
  Then for $\vd:\re_{\geq 0} \to \re_{\geq 1}$ defined as\\[3pt]
  $\vd(r) :=  
  \begin{cases}
    \max\Bigl\{ 1+4\bigl(\frac{\sqrt{\kappa}r}{\tanh(\sqrt{\kappa}r)} -1\bigr),~\bigl(\frac{\sinh(2\sqrt{\kappa}\cdot r)}{2\sqrt{\kappa}\cdot r}\bigr)^2 \Bigr\}, & \text{if $r>0$,}\\
    1, & \text{if $r=0$,} 
  \end{cases} \eqnum \label{def:vd}$\\
  the following inequality holds: $\dd{y}{z}^2\leq \vd(\dd{x}{y})\cdot \DD{x}{y}{z}^2$.
\end{lembox}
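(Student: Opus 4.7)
Fix $c := \dd{x}{y}$, $d := \dd{x}{z}$, $e := \DD{x}{y}{z}$, $f := \dd{y}{z}$, $\sigma := \sqrt{\kappa}$, and let $\theta$ be the angle between $\lm{x}{y}$ and $\lm{x}{z}$ in $T_x M$, so that the Euclidean law of cosines in that tangent space gives $e^2 = c^2 + d^2 - 2cd\cos\theta$. Proposition~\ref{prop:rauch} immediately delivers $f^2 \leq S_\kappa(\max\{c,d\})\,e^2$; the defect is the $d$-dependence, whereas $\vd(c)$ must depend only on $c$. My plan is to split on whether $d \leq 2c$ or $d > 2c$, the two cases recovering the two branches of the $\max$ defining $\vd(c)$.

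\textbf{Case $d \leq 2c$.} Since $S_\kappa$ is monotone increasing, Proposition~\ref{prop:rauch} with $\max\{c,d\}\leq 2c$ gives
\[
  f^2 \;\leq\; S_\kappa(2c)\,e^2 \;=\; \bigl(\sinh(2\sigma c)/(2\sigma c)\bigr)^2 e^2,
\]
which is exactly the second branch of $\vd(c)$.

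\textbf{Case $d > 2c$ (the main obstacle).} Here Rauch is too crude. The plan is to invoke Toponogov's comparison theorem, which under the curvature lower bound $K \geq -\kappa$ says that the angle at the vertex corresponding to $x$ in the comparison triangle of sides $c, d, f$ in the model space of constant curvature $-\kappa$ is at most $\theta$. Combined with the hyperbolic law of cosines in that model space, this yields
\[
  \cosh(\sigma f) \;\leq\; \cosh(\sigma c)\cosh(\sigma d) \;-\; \sinh(\sigma c)\sinh(\sigma d)\cos\theta.
\]
Substituting $\cos\theta = (c^2 + d^2 - e^2)/(2cd)$ and using the identities $\cosh(a\pm b) = \cosh a\cosh b \pm \sinh a\sinh b$, the right-hand side simplifies to $\cosh(\sigma(d-c)) + \tfrac{\sinh(\sigma c)\sinh(\sigma d)}{2cd}\bigl(e^2-(d-c)^2\bigr)$. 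Passing back from $\cosh(\sigma f)$ to $f^2$ via $\cosh(x)-1\geq x^2/2$ and bounding $\sinh(\sigma t)/t \leq \sigma\cosh(\sigma t)$, I would then exploit the constraint $d > 2c$ (which forces $e \geq d-c > c$) via a monotonicity argument in $d$ on $(2c,\infty)$ to eliminate the residual $d$-dependence and arrive at $f^2 \leq \bigl(1+4(\sigma c\coth(\sigma c)-1)\bigr)\,e^2$, matching branch~1 of $\vd(c)$. The main obstacle is precisely this final step: loose hyperbolic estimates blow up with $d$, so extracting the exact coefficient $4(\sigma c\coth(\sigma c)-1)$ rather than a weaker constant forces one to identify the extremal value of $d$ (likely the boundary $d=2c$ or the limit $d\to\infty$) where the Toponogov bound is sharpest, and to verify that at that extremum the bound reduces cleanly to branch~1.
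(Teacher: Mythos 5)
Your case split at $d=2c$ and your treatment of the case $d\leq 2c$ via Rauch with $\max\{c,d\}\leq 2c$ are exactly the paper's argument (the paper proves a parametrized version, Lemma~\ref{dist:formal}, with split at $(1+\eps)c$; Lemma~\ref{lem:betterdist} is the $\eps=1$ instance). The gap is in the main case $d>2c$. Your Toponogov bound and the rewriting to $\cosh(\sigma f)\leq\cosh(\sigma(d-c))+\tfrac{\sinh(\sigma c)\sinh(\sigma d)}{2cd}\bigl(e^2-(d-c)^2\bigr)$ are correct, but the proposed conversion to $f^2$ via $\cosh(u)-1\geq u^2/2$ cannot close the argument: applied to the first term it yields $\tfrac{2}{\sigma^2}\bigl(\cosh(\sigma(d-c))-1\bigr)$, which grows exponentially in $d-c$, whereas the target $\bigl(1+4(\tri{}-1)\bigr)e^2$ with $\tri{}:=\tfrac{\sqrt{\kappa}c}{\tanh(\sqrt{\kappa}c)}$ grows only quadratically in $e$, and $e$ can be as small as $d-c$. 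Concretely, take $\lm{x}{y}$ parallel to $\lm{x}{z}$ so that $e=d-c$: Toponogov then gives $\cosh(\sigma f)\leq\cosh(\sigma e)$, hence $f\leq e$, but your chain certifies only $f^2\leq\tfrac{2}{\sigma^2}(\cosh(\sigma e)-1)$, which is exponentially larger than $e^2$ once $\sigma e$ is of order one. No choice of ``extremal $d$'' repairs this, since the loss is in the $\cosh$-to-square conversion, not in a monotonicity argument over $d$.

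The paper avoids this by invoking the global trigonometric inequality of \citet{zhang2016first} (Proposition~\ref{lem:trig}, originally due to \citet{cordero2001riemannian}): for the geodesic triangle with vertices $x,y,z$,
\[
\dd{y}{z}^2 \;\leq\; \tri{}\cdot \dd{x}{z}^2 + \dd{x}{y}^2 - 2\inp{\lm{x}{y}}{\lm{x}{z}}\,.
\]
This is exactly the careful hyperbolic-to-quadratic conversion you would need to rederive, and its decisive feature is that the distortion factor $\tri{}$ depends only on $\dd{x}{y}$ and not on $\dd{x}{z}$. Rewriting the right-hand side as $(\tri{}-1)\dd{x}{z}^2+\DD{x}{y}{z}^2$ via the Euclidean law of cosines in $T_xM$, and then using the Euclidean triangle inequality $\DD{x}{y}{z}\geq\dd{x}{z}-\dd{x}{y}>\tfrac{1}{2}\dd{x}{z}$ (valid precisely when $d>2c$) eliminates the residual $\dd{x}{z}^2$ and gives branch 1 of $\vd$ in two lines. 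So the missing ingredient in your proposal is the Zhang--Sra lemma itself; once it is in hand, no optimization over $d$ is needed.
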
 
\begin{proof}
  The proof uses Proposition~\ref{prop:rauch} and a Riemannian trigonometric inequality due to \cite[Lemma 6]{zhang2016first}. See Appendix~\ref{appen:geo} for a formal statement and the proof.
\end{proof}%
%
Note that $\vd$  behaves similarly to $S_{\kappa}$. Most importantly, $\lim_{r\to 0+}\vd(r) =1$, implying that the effect of distortion diminishes as the distance decreases. Hence, one can essentially regard Lemma~\ref{lem:betterdist}  as a version of  Proposition~\ref{prop:rauch} in which the term $\max\{ \dd{x}{y}, \dd{x}{z} \}$ is replaced with $\dd{x}{y}$. 
Thanks to Lemma~\ref{lem:betterdist}, now we have $\vd(\dd{x_t}{z_t})$ as a valid distortion rate, which is \emph{accessible} to the algorithm at iteration $t$. Therefore, we propose the following algorithm:

\begin{alg}[Riemannian accelerated gradient method] \label{alg:1}

  {\bf Input:} $x_0=y_0=z_0\in M$; constant $\cc_0>0$; $\sg\in(0,2/L)$; $\drop:= \sg(1-L\sg/2)$; integer $T$.
  
{\bf for $t=0,1,2,\dots,T$:}

\quad Compute the distortion rate $\drr{t+1}:= \vd(\dd{x_t}{z_t})$ as per \eqref{def:vd}.

\quad Find $\cc_{t+1}\in[2\mu\drop,1)$ such that $\frac{\cc_{t+1}(\cc_{t+1}-2\mu\drop)}{1-\cc_{t+1}}= \frac{1}{\drr{t+1}}\cdot  \cc_t^2$.

\quad Compute $\wg_{t+1} :=  \frac{\cc_{t+1}-2\mu\drop}{1-2\mu\drop}$, $\wm_{t+1}:= 1-2\mu\drop \cc_{t+1}^{-1}$, and  $\sm_{t+1}:= 2\drop \cc_{t+1}^{-1}$.

\quad Update the next step iterates as per \eqref{r:nesterov} with $\sg_{t+1}:=\sg$.

{\bf end for}
\end{alg}
\subsection{Convergence rate analysis of the proposed method}\label{sec:achieve}
Having proposed the algorithm, our final task is to analyze its convergence rate. From Remark~\ref{rmk:constant}, we know the algorithm achieves a \emph{full} acceleration when  $\ddr{t}$ is close to $1$.
Due to the property $\lim_{r\to 0+}\vd (r) = 1$, one therefore needs to show that $\dd{x_t}{z_t}$ is close to $0$.
Although $\dd{x_0}{z_0}=0$, one can quickly notice that this is obviously not true for $t\geq 1$.

Now one natural follow-up question is whether $\dd{x_t}{z_t}$ shrinks over iterations.
As we have seen in \S\ref{sec:ensure},  the convergence of the iterates to the optimal point is a direct consequence of our potential function analysis.
Similarly, one can immediately see that $\DD{x_t}{z_t}{x_*}\to 0$.
It turns out that from this shrinking projected distance, one can also deduce $\dd{x_t}{z_t}\to 0$ under mild conditions:
\begin{lembox}[Shrinking $\dd{x_t}{x_t}$] \label{lem:shrink:d}
  Assume $0<\mu$; let $\init :=  f(x_0)-f(x_*) + \frac{1}{4\drop} \cc_{0}^2\cdot \dd{x_0}{x_*}^2$.
  If $1 < \sg L<2-\cc_{t}$ and $\cc_{t }>2\mu\drop$ hold at iteration $t\geq 1$, then Algorithm~\ref{alg:1} satisfies: $\dd{x_{t}}{z_{t}} \leq   \co \bigl[\init\prod_{j=1}^{t-1}(1- \cc_j)\bigr]^{1/2}$, where $ \co>0$ is a constant depending only  on $\mu,L,\sg$.
\end{lembox}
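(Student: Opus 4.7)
The plan is to bound $d(x_t,z_t)$ via the triangle inequality $d(x_t,z_t) \le d(x_t,x_*) + d(z_t,x_*)$ and to control both summands using the output of the potential function analysis (Theorem~\ref{thm:riem}) together with two classical Hadamard ingredients: strong g-convexity, giving $d(y,x_*)^2 \le (2/\mu)(f(y)-f(x_*))$, and the CAT(0) comparison inequality, giving $d(q,r) \le \DD{p}{q}{r}$ for any $p,q,r \in M$.

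First, iterating Theorem~\ref{thm:riem} from the natural initialization $A_0 = 1$, $B_0 = \cc_0^2/(4\drop)$ (which is consistent with Algorithm~\ref{alg:1} and the invariant $B_j/A_j = \cc_j^2/(4\drop)$) and using $x_0 = y_0 = z_0$ so that $\Psi_0 = \init$, the monotonicity $\Psi_t \le \init$ yields
\begin{equation*}
  f(y_t) - f(x_*) \le \init\prod_{j=1}^{t}(1-\cc_j)
  \quad\text{and}\quad
  \DD{x_t}{z_t}{x_*}^2 \le \tfrac{4\drop}{\cc_t^2}\,\init\prod_{j=1}^{t}(1-\cc_j).
\end{equation*}
Combining these with strong g-convexity and CAT(0), and using the invariant $\cc_j \in [2\mu\drop,1)$ that Algorithm~\ref{alg:1} enforces for every $j \ge 1$ (which gives the uniform bound $4\drop/\cc_j^2 \le 1/(\mu^2\drop)$), both $d(y_t,x_*)$ and $d(z_t,x_*)$ are at most $c_1(\mu,L,\sg)\,[\init\prod_{j=1}^{t}(1-\cc_j)]^{1/2}$.

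Second, to bound $d(x_t,x_*)$, I exploit that update~\eqref{r:nest:0} places $x_t$ on the geodesic joining $y_{t-1}$ and $z_{t-1}$ with coefficient $\wg_t \in (0,1)$. Applying the triangle inequality through $x_*$ to $d(y_{t-1},z_{t-1})$ and using $\wg_t \le 1$ gives
\begin{equation*}
  d(x_t,x_*) \le \wg_t\, d(y_{t-1},z_{t-1}) + d(y_{t-1},x_*) \le 2\,d(y_{t-1},x_*) + d(z_{t-1},x_*).
\end{equation*}
Applying the step-$(t-1)$ bounds from the previous paragraph then produces $d(x_t,x_*) \le c_2(\mu,L,\sg)\,[\init\prod_{j=1}^{t-1}(1-\cc_j)]^{1/2}$, and combining with the bound on $d(z_t,x_*)$ (using $\prod_{j=1}^{t}(1-\cc_j) \le \prod_{j=1}^{t-1}(1-\cc_j)$) yields the claim with $\co := c_1 + c_2$.

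The only substantive work is careful bookkeeping of constants, all of which reduce to explicit functions of $\mu,L,\sg$ via the algorithm's invariant $\cc_j \ge 2\mu\drop$. The stated hypotheses $1 < \sg L < 2-\cc_t$ and $\cc_t > 2\mu\drop$ enter only at this stage---guaranteeing $\drop > 0$, well-definedness of $\wg_t,\wm_t,\sm_t$, and the uniform positive lower bounds on the relevant denominators. Conceptually nothing beyond Theorem~\ref{thm:riem} and basic Hadamard comparison geometry is required.
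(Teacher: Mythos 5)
There is a genuine gap: your central comparison inequality is stated in the wrong direction. On a Hadamard (CAT(0)) manifold, angles of geodesic triangles are smaller than their Euclidean comparison angles, which yields $\DD{p}{q}{r} \leq \dd{q}{r}$, \emph{not} $\dd{q}{r} \leq \DD{p}{q}{r}$. This is precisely the direction the paper uses (e.g.\ ``projected distances are shorter than the actual distances'' in the proof of Proposition~\ref{prop:shrink}, and $\DD{x_{t+1}}{z_t}{x_*}\leq\dd{z_t}{x_*}$ in \S\ref{sec:valid}). Consequently your plan of bounding $\dd{z_t}{x_*}$ from above by the potential-controlled quantity $\DD{x_t}{z_t}{x_*}$ does not go through. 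The only way to convert a projected-distance bound into an actual-distance bound is via Rauch (Proposition~\ref{prop:rauch}), but that introduces a distortion factor $S_\kappa(\max\{\dd{x_t}{z_t},\dd{x_t}{x_*}\})$ which (i) depends on $\kappa$, contradicting the claim that $\co$ depends only on $\mu,L,\sg$, and (ii) is circular since $\dd{x_t}{z_t}$ is exactly what you are trying to bound. The same obstruction affects your bound on $\dd{x_t}{x_*}$, which you derive from $\dd{z_{t-1}}{x_*}$.

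Relatedly, your remark that the hypotheses $1 < \sg L < 2-\cc_t$ ``enter only'' to guarantee $\drop>0$ and well-definedness is a misdiagnosis. Any $\sg L\in(0,2)$ gives $\drop>0$; the tighter interval is doing serious work. The paper's proof never compares $\dd{z_t}{x_*}$ and $\DD{x_t}{z_t}{x_*}$ at all. Instead, it observes that by \eqref{r:nest:2} the vectors $\lm{x_{t+1}}{z_{t+1}}$ and $\lm{x_{t+1}}{y_{t+1}}$ live in the same tangent space and have explicit expressions, so $\dd{x_{t+1}}{z_{t+1}} = \normp{\wm_{t+1}\lm{x_{t+1}}{z_t} - \sm_{t+1}\nabla f(x_{t+1})}$ can be controlled by the Euclidean triangle inequality in $\T_{x_{t+1}}M$. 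This reduces the problem to bounding $\dd{y_t}{z_t}$ (Proposition~\ref{dyz}), which is done by applying the \emph{reverse} triangle inequality to $\DD{x_{t+1}}{y_{t+1}}{z_{t+1}}$ and solving for $\dd{y_t}{z_t}$; the hypotheses $1 < \sg L \le 2-\cc_{t+1}$ and $\cc_{t+1}>2\mu\drop$ are exactly what is needed to make the coefficient $\mathcal{K} = \wm_{t+1}(1-\wg_{t+1}) - L\wg_{t+1}|\sm_{t+1}-\sg|$ strictly positive so the inequality can be inverted. Without that structure, the quantity you need is not recoverable from the potential bound using only $\mu,L,\sg$-dependent constants.
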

\begin{proof}
  The proof relies on elementary geometric inequalites (see Appendix~\ref{pf:shrink}).
\end{proof}
Note that the assumption $\sg L\in (1,  2-\cc_{t }]$ can be roughly read as ``$\sg L \in (1 ,  2-\sqrt{\mu/L}]$'' because Remark~\ref{rmk:constant} ensures that $\cc(\ddr{}) \leq \sqrt{2\mu\drop} <\sqrt{\mu/L}$ for all $\ddr{}\geq 1$. More precisely, since $\cc_t$ quickly converges to the fixed point, one can easily ensure  $\cc_t \leq \sqrt{\mu/L}$ after few iterations. Formalizing this argument, we finally obtain our main theorem (which formalizes Theorem~\ref{thm:main:informal}):
\begin{thmbox}[Eventual full acceleration of Algorithm~\ref{alg:1}] \label{thm:main}
  Assume that  $ 0<\mu<L$. 
  For any $\cc_0 >0$, $\sg L\in(1,2-\sqrt{\mu/L}]$, let
  $\drop:= \sg(1-L\sg/2)$ and 
  $\rr:=1-\frac{8\mu\drop}{5+\sqrt{5}} \in (0,1)$.
  Then, Algorithm~\ref{alg:1} satisfies the following accelerated convergence:\\[-18pt]   
  \begin{align}
    f(y_t)-f(x_*)=\OO{(1-\cc_1)(1-\cc_2)\cdots (1-\cc_t)}\,, \label{accel:rate}
    \vspace*{-3pt}
  \end{align}
  where $\{\cc_t\}$ is a sequence such that (i) $\cc_{t}> 2\mu\drop$ $\forall t\geq 0$ and (ii)  for all $\eps>0$, $|\cc_t-\sqrt{2\mu\drop}|\leq \eps$ whenever $t= \Omega\left(\frac{\log(1/\eps)}{\log(1/\rr)}\right)$, where the constant involved in $\Omega(\cdot)$ depends only on $\mu,L,\sg,\kappa$. 
\end{thmbox}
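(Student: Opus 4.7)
}

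The plan is to apply Theorem~\ref{thm:riem} iteratively and then bootstrap the convergence of the distortion rate $\drr{t}$ to $1$. First I would verify the hypothesis of Theorem~\ref{thm:riem} at every step: since Lemma~\ref{lem:betterdist} says $\vd(\dd{x_t}{z_t})$ is a valid distortion rate, the choice $\drr{t+1} := \vd(\dd{x_t}{z_t})$ made in Algorithm~\ref{alg:1} legitimizes the per-step potential decrease. Unrolling the potential inequality of Theorem~\ref{thm:riem} gives $f(y_t)-f(x_*) \le \init \prod_{j=1}^t (1-\cc_j)$, which is~\eqref{accel:rate}. Property (i), $\cc_t > 2\mu\drop$, follows directly from the defining equation $\cc_{t+1}(\cc_{t+1}-2\mu\drop)/(1-\cc_{t+1}) = \cc_t^2/\drr{t+1}$: the RHS is strictly positive because $\cc_t>0$, while the LHS vanishes at $2\mu\drop$ and increases monotonically on $[2\mu\drop,1)$ to $+\infty$.

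For property (ii), the key chain of implications is $\dd{x_t}{z_t} \to 0 \;\Longrightarrow\; \drr{t}\to 1 \;\Longrightarrow\; \cc_t \to \sqrt{2\mu\drop}$. The second implication uses continuity of $\vd$ at $0$ (with $\vd(0)=1$); the third uses continuity, in $\drr{}$, of the fixed point $\cc(\drr{})$ of the perturbed recursion~\eqref{r:recur:xi}, combined with the geometric contraction of Lemma~\ref{lem:conv} applied to the distortion-free iteration. To drive $\dd{x_t}{z_t}\to 0$, I would invoke Lemma~\ref{lem:shrink:d}, which gives $\dd{x_t}{z_t} \le \co\,[\init\prod_{j=1}^{t-1}(1-\cc_j)]^{1/2}$ \emph{provided} the hypotheses $\sg L \in (1, 2-\cc_t)$ and $\cc_t > 2\mu\drop$ hold. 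The latter is (i); the former is where the assumption $\sg L \le 2-\sqrt{\mu/L}$ enters: once $\cc_t$ is bounded above by $\sqrt{\mu/L}$ (which we will argue holds eventually, since the fixed point of~\eqref{r:recur:xi} satisfies $\cc(\drr{})\le \sqrt{2\mu\drop}<\sqrt{\mu/L}$ for any $\drr{}\ge 1$, and perturbations around the fixed point can be controlled), the hypothesis is valid. Combining these, the crude uniform bound $\cc_j > 2\mu\drop$ yields $\dd{x_t}{z_t} = O((1-2\mu\drop)^{t/2})$; propagating this through $\vd$ (which is Lipschitz near $0$ in $r^2$) delivers an explicit geometric rate for $|\cc_t-\sqrt{2\mu\drop}|$, and a careful tracking of the constants produces the particular value $\rr = 1-8\mu\drop/(5+\sqrt{5})$.

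The main obstacle is the bootstrap: $\cc_t$ and $\drr{t}$ are mutually coupled, since a large distortion $\drr{t}$ pushes $\cc_t$ toward $2\mu\drop$, which in turn slows the decay of $\dd{x_t}{z_t}$ controlling $\drr{t+1}$. The favorable feature that makes the feedback loop contract is that (i) alone, $\cc_t > 2\mu\drop$ \emph{strictly} for all $t$, is already enough for Lemma~\ref{lem:shrink:d} to produce an unconditional geometric shrinkage of $\dd{x_t}{z_t}$ at rate $(1-2\mu\drop)^{1/2}$---this is slower than the target acceleration but independent of $\drr{t}$, which breaks the circularity. Once $\dd{x_t}{z_t}$ is small, $\vd(\dd{x_t}{z_t}) \approx 1 + O(\kappa\,\dd{x_t}{z_t}^2)$, and the perturbed fixed-point iteration~\eqref{r:recur:xi} inherits the geometric convergence of the unperturbed one (Lemma~\ref{lem:conv}), yielding $\cc_t \to \sqrt{2\mu\drop}$ at the advertised rate. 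The base case is automatic: $\dd{x_0}{z_0}=0$ so $\drr{1}=1$, providing a clean starting point from which the inductive argument unfolds.
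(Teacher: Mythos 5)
Your overall plan matches the paper's proof in Appendix~C.7: apply Theorem~\ref{thm:riem} iteratively for~\eqref{accel:rate}, use Lemma~\ref{lem:shrink:d} to shrink $\dd{x_t}{z_t}$, propagate through the local behavior of $\vd$ and $\cc(\cdot)$ to show $\cc_t\to\sqrt{2\mu\drop}$ geometrically, and record the constants. The chain $\dd{x_t}{z_t}\to0 \Rightarrow \ddr{t}\to1 \Rightarrow \cc_t\to\sqrt{2\mu\drop}$ is exactly the paper's Propositions~\ref{behav:1}--\ref{behav}, and the idea that $\cc_t$ eventually drops below $\sqrt{\mu/L}$ independently of the distortion is the paper's Proposition~\ref{xiconv}.

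However, your final paragraph misstates how the circularity is broken. You write that (i) alone---$\cc_t>2\mu\drop$---``is already enough for Lemma~\ref{lem:shrink:d},'' but that lemma requires the second hypothesis $\sg L\le 2-\cc_t$ as well, which does \emph{not} follow from (i). As you correctly note earlier in the second paragraph, that second hypothesis is only available once $\cc_t\le\sqrt{\mu/L}$, which requires a separate argument. The actual decoupling mechanism (the paper's Proposition~\ref{xiconv}) is different: one shows $\cc_t\le\sqrt{\mu/L}$ eventually by a pure contraction estimate on the recursion $\cc_{t+1}=\ctt_{t+1}(\cc_t)$, using only that $\ddr{t}\ge 1$ (always true) and that each fixed point $\cc(\ddr{t})\le\sqrt{2\mu\drop}$, with the derivative bound of Proposition~\ref{prop:der}. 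That step does not touch $\dd{x_t}{z_t}$ at all, and it is \emph{after} that step that Lemma~\ref{lem:shrink:d} becomes applicable and the geometric shrinkage $\dd{x_t}{z_t}=O((1-2\mu\drop)^{t/2})$ kicks in. If you fix that one claim, the rest of your sketch aligns with the paper's proof; the remaining work is quantitative: the Taylor estimates $\sqrt{2\mu\drop}-\cc(\vd(r))\le\kappa r^2$ for small $r$, and applying the contraction factor $1-\tfrac{8\mu\drop}{5+\sqrt 5}$ from Proposition~\ref{prop:der} a second time to pass from convergence of $\cc(\ddr{t})$ to convergence of $\cc_t$, which is where the stated $\rr$ comes from.
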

\begin{proof}
  \eqref{accel:rate} is immediate from Theorem~\ref{thm:riem}. For the convergence of $\{\cc_t\}$, see Appendix~\ref{pf:main}.
\end{proof}
Since $\drop \to 1/(2L)$ as $\sg \to 1/L$, one can achieve the convergence rate \emph{arbitrarily} close to the full acceleration rate by choosing $\sg$ bigger but sufficiently close to $1/L$.
This concludes our main results.

\section{Comparison with other potential function analyses} \label{sec:dis}
In this section, we compare existing potential function analyses with our approach. 
For a survey, see e.g.~\citep[Appendix B]{taylor2019stochastic}.

\noindent\textbf{Other approaches to \eqref{def:poten}.}
The potential function~\eqref{def:poten} has appeared in prior work on accelerated methods, corroborating its \emph{suitability}. Compared with our analysis, the main difference is that the existing analyses either work for (i) the case $\mu=0$, or (ii) just the fixed-step case $\cc_t=\cc(\ddr{})$.
We highlight that our analysis is the first to recover--from first principles--Nesterov's general scheme that smoothly interpolates the cases $\mu=0$ and $\mu>0$.
Moreover, our analysis allows $\cc_t$ to vary, which is crucial in the Riemannian case where the recursive relation changes over iterations.

Function~\eqref{def:poten} appears in~\citep[Proposition 4]{wilson2016lyapunov} within the context of a continuous dynamics approach to acceleration. That work studies methods for discretizing \emph{accelerated} ODEs derived in~\citep{su2014differential,wibisono2016variational} to transform the continuous dynamics into discrete methods. In that context, they show that \eqref{def:poten} is a discretization of a \emph{canonical} Lyapunov function. Another appearance is in \citep{diakonikolas2019approximate}, where they extend the continuous dynamics view via an \emph{approximate duality gap technique}. Roughly, to analyze a first-order method, they consider an upper bound $U_t$ and a lower bound $L_t$ on the optimal value $f(x_*)$. Their analysis then proceeds by showing the gap $G_t:=U_t-L_t$ diminishes with the rate $\alpha_t$, i.e., $\alpha_t G_t$ is decreasing, which corresponds to  showing $A_t\err_t$ is decreasing in our language (\S\ref{sec:euclid}).
Although motivated mostly for continuous dynamics, their techniques cover discrete methods with some modifications. In particular, their choice of $G_t$ for accelerated method corresponds  to~\eqref{def:poten} (see \S4.2 therein).

Yet another appearance of \eqref{def:poten} is~\citep[(5.50)]{bansal2019potential}, wherein the motivation was to modify the potential function analyses for gradient descent to cover accelerated methods. They propose the idea of running \emph{two different} gradient steps and linearly combining them to achieve desired accelerated convergence. Following their argument, it turns out \eqref{def:poten} is the right choice. Indeed, their approach bears resemblance to the \emph{linear coupling} framework \citep{allen2014linear}, in which \eqref{def:poten} has even more \emph{canonical} interpretations; see Appendix \ref{sec:linear}.

\noindent\textbf{SDP-based approaches.} 
The primary distinction between our approach and most SDP-based approaches~\citep{drori2014performance,lessard2016analysis,taylor2018lyapunov,taylor2019stochastic} is that our analysis is \emph{analytical}, whereas the analyses therein are \emph{numerical}. More specifically, the existing works require \emph{numeric values} of parameters (e.g., $\wg,\wm,L,\mu$) because they find  suitable potential functions via solving SDPs. Note that one \emph{cannot} solve SDPs unless the numeric coefficients are given. Abstractly, our choice of parameters in Theorem~\ref{thm:euclidean} can be interpreted as an \emph{analytical} solution to the \emph{symbolic} versions of SDPs formulated in the prior works.

Notable exceptions are \citep{kim2016optimized,hu2017dissipativity,safavi2018explicit,cyrus2018robust,aybat2018robust}, in which small SDPs are solved \emph{analytically}. Specifically, some optimized step sizes for Nesterov's method are derived via solving small SDPs explicitly in~\citep{kim2016optimized,safavi2018explicit}; robust versions of gradient methods are derived analytically via classical control-theoretic arguments 
in \citep{cyrus2018robust,aybat2018robust}, and Nesterov's method is reinterpreted using \emph{dissipativity theory} in \citep{hu2017dissipativity}. Indeed borrowing the dissipativity interpretation from \citep{hu2017dissipativity}, one can interpret our calculations in \S\ref{sec:ensure} as finding a \emph{analytic} solution to the dissipation inequality (Theorem 2 therein).

\section{Conclusion}
	\label{sec:conclusion}
	
In this paper, we established the first \emph{global} accelerated methods for non-Euclidean setting.
Our analysis demonstrated that our proposed scheme is always faster than gradient descent, and achieves the accelerated convergence after few iterations.
This establishment makes a solid progress towards understanding acceleration under non-Euclidean settings.
Our analysis builds on the potential function analysis, a recent paradigm of analyzing first-order methods.
Remarkably, our analysis for the Euclidean case \emph{exactly} recovers Nesterov's optimal method based on  estimate sequence, shedding a new light on the scope of the technique which has puzzled researchers for many years.

\setlength{\bibsep}{4pt}

\appendix

	\section{Interpretations via linear coupling}
\label{sec:linear}
Recently, \cite{allen2014linear} established a framework of designing fast first-order methods called \textit{linear coupling}.
The principle observation therein is that the two most fundamental first-order methods, namely \emph{gradient} and \emph{mirror} descent, have \emph{complementary} performances, and one can therefore design faster first-order methods by \textit{linearly coupling} the two methods.
In this section, we will discuss how one can derive from linear coupling (i) the Nesterov's optimal method iterations \eqref{nesterov} and (ii) our choice of potential function \eqref{def:poten}.

\subsection{Nesterov's iteration from linear coupling.}

One can actually derive the iteration updated rules  \eqref{nesterov} from linear coupling.
To illustrate, let us denote by  $\grad_{s\cdot \nabla }(x)$ and $\mirr_{s\cdot \nabla}(x)$ a single step of the gradient and mirror step, respectively.
If we choose the Bregman divergence associated with the mirror descent to be $D(u,v)=\frac{1}{2}\norm{u-v}_2^2$, then \eqref{nesterov} can be rewritten as follows:
\begin{align*}
    	\xg_{t+1} & \leftarrow \wg_{t+1} z_t  + (1-\wg_{t+1}) y_t  \\
    		\xm_{t+1} & \leftarrow \tilde{\wm}_{t+1} z_t  + (1-\tilde{\wm}_{t+1}) y_t  \\
    		\nabla_{t+1}&\leftarrow \nabla f(\xg_{t+1})\\
			y_{t+1} &\leftarrow  \grad_{\sg_{t+1}\nabla_{t+1}}(\xg_{t+1}) \\
			z_{t+1} &\leftarrow  \mirr_{\sm_{t+1}\nabla_{t+1}}( \xm_{t+1} ) \,,
\end{align*}
where $\tilde{\wm}_{t+1}=\wg_{t+1} +(1-\wg_{t+1})\wm_{t+1}$.
Note that these steps clearly respect linear coupling: for each step, we compute two different linear combinations of $z_t$ and $y_t$ and run gradient and mirror step from each combination to obtain the next iterates $y_{t+1}$ and $z_{t+1}$, respectively.
Indeed, the original algorithm considered in the paper~\citep{allen2014linear} chooses $\wm'\equiv 1$ and is hence a special case of the above steps.
One concrete advantage of viewing \eqref{nesterov} as above is that then \eqref{nesterov} can be naturally generalized to other settings where the smoothness of $f$ is defined with respect to a norm different from $\ell_2$.

\subsection{Choosing potential function from linear coupling.} 

Another advantage of linear coupling is that one can naturally  derive our choice of potential function \eqref{def:poten}.
To see this, we first note that the folklore analysis of gradient descent deals with the cost value $f(y)$, while that of mirror descent deals with the distance to the optimal, or more generally, the Bregman divergence $D(z,x_*)$. (See e.g. \citep[Section 2]{allen2014linear} for details.)
Since the algorithm is a \emph{linear combination} of the two methods, it is then natural to consider a \emph{linear combination} of the two performance measures, arriving at \eqref{def:poten} since  our case corresponds to the case where the Bregman divergence is chosen as $D(z,x_*)=\frac{1}{2}\norm{z-x_*}_2^2$. 

	\section{Some inequalities from Riemannian geometry (proof of Lemma~\ref{lem:betterdist})}
	\label{appen:geo}
	The main technical difficulty of analyzing optimization methods over Riemannian manifolds lies in handling its non-Euclidean metric.
	One machinery to overcome this difficulty is a classical comparison theorem due to \cite{rauch1951contribution}.
	At a high level, the theorem compares the exponential map on the manifold of interest to that on the manifold of constant sectional curvatures.

	\begin{proposition}[Rauch comparison theorem]
	\label{rauch}
		Let $M$ be a Riemannain manifold with  sectional curvatures lower bounded by $-\kappa<0$.
		Then, for any $x\in M$ and $u\in\T_xM$, the following upper bound on the operator norm of the differential of the exponential map holds:
	\begin{align*} \onorm{d(\expm_x)_{u}} \leq  \frac{\sinh(\sqrt{\kappa}\norm{u})}{\sqrt{\kappa}\norm{u}} \,.
		\end{align*}
	\end{proposition}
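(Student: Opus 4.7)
The plan is to translate the operator-norm bound on $d(\expm_x)_u$ into a comparison between a Jacobi field on $\M$ and one on the model hyperbolic space of constant sectional curvature $-\kappa$. Concretely, for any $w\in \T_u(\T_x\M)\cong \T_x\M$, the variation $(s,t)\mapsto \expm_x\bigl(t(u+sw)\bigr)$ produces, after differentiating in $s$ at $s=0$, a Jacobi field $J$ along the geodesic $\gamma(t):=\expm_x(tu)$ with the initial conditions $J(0)=0$, $\nabla_t J(0)=w$, and endpoint value $J(1)=d(\expm_x)_u(w)$. Hence it suffices to prove $\norm{J(1)}\le \frac{\sinh(\sqrt{\kappa}\norm{u})}{\sqrt{\kappa}\norm{u}}\norm{w}$ for every such $J$.

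Next I would split $w=w^{\parallel}+w^{\perp}$ according to whether the initial data is collinear with or orthogonal to $u$. Gauss's lemma says that $d(\expm_x)_u$ preserves this decomposition and acts as an isometry on the radial piece, so the contribution of $w^{\parallel}$ to $\norm{J(1)}$ is exactly $\norm{w^{\parallel}}$. For the perpendicular piece, the Jacobi equation $\nabla_t^2 J^{\perp}+R(J^{\perp},\gamma')\gamma'=0$ together with the hypothesis $K_\M\ge -\kappa$ places an upper bound on how quickly the curvature term can drag $J^{\perp}$ away from $0$; this is precisely where Rauch's first comparison theorem enters.

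Comparing $J^{\perp}$ with a Jacobi field $\widetilde J$ of matching initial data along a geodesic of speed $\norm{u}$ on the model space of constant curvature $-\kappa$ (which has no conjugate points, so the comparison is valid on all of $[0,1]$), and because $\M$ has the \emph{larger} sectional curvature, Rauch gives $\norm{J^{\perp}(t)}\le \norm{\widetilde J(t)}$ for $t\in[0,1]$. A direct integration of the model-space ODE $y''-\kappa\norm{u}^2 y=0$ with $y(0)=0$, $y'(0)=\norm{w^{\perp}}$ yields the closed form $\norm{\widetilde J(t)}=\norm{w^{\perp}}\cdot\frac{\sinh(\sqrt{\kappa}\norm{u}\,t)}{\sqrt{\kappa}\norm{u}}$. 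Evaluating at $t=1$, combining with the radial identity via orthogonality of $d(\expm_x)_u(w^{\parallel})$ and $d(\expm_x)_u(w^{\perp})$, and using the elementary $\sinh(s)/s\ge 1$ to absorb the radial term $\norm{w^{\parallel}}^2$ into the same prefactor then delivers $\norm{d(\expm_x)_u(w)}^2\le \bigl(\tfrac{\sinh(\sqrt{\kappa}\norm{u})}{\sqrt{\kappa}\norm{u}}\bigr)^2\norm{w}^2$, which is the stated operator-norm bound.

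The main obstacle is the bookkeeping around the speed of $\gamma$: since $\gamma$ has speed $\norm{u}$ rather than unit speed, one must carry the factor $\norm{u}$ through the Jacobi equation (either by rescaling time and the effective curvature, or by solving the non-unit-speed ODE directly) to land exactly on the denominator $\sqrt{\kappa}\norm{u}$ rather than $\sqrt{\kappa}$; once that is done, the rest of the argument is essentially a textbook application of Jacobi-field comparison.
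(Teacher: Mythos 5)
Your proposal is correct and is essentially the same argument as the paper's, just spelled out at the level of Jacobi fields: the paper quotes Gauss's lemma for the radial piece and invokes the Rauch comparison theorem as a black box for the perpendicular piece, whereas you unpack the Rauch step into the underlying Jacobi-field comparison with the constant-curvature model and explicitly solve the model ODE to land on the $\frac{\sinh(\sqrt{\kappa}\|u\|)}{\sqrt{\kappa}\|u\|}$ prefactor. The bookkeeping you flag about the non-unit speed $\|u\|$ and the use of $\sinh(s)/s \geq 1$ to absorb the radial term are exactly the implicit details the paper leaves to the reader.
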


	\begin{proof}
		Let $u_0:=u/\norm{u}$. First, it follows from the definition that the exponential map is radially isometric, i.e.,  $\norm{d(\expm_x)_u (u_0)} =\norm{u_0}$.
		Next, due to Rauch Comparison Theorem, for any $v$ orthogonal to $u$, we have $\norm{v}\leq \norm{d(\expm_x)_u (v)} \leq \frac{\sinh(\sqrt{\kappa}\norm{u})}{\sqrt{\kappa}\norm{u}} \norm{v}$.
		Since any vector in $\T_{u}(\T_xM)$ can be represented as a linear combination of $u_0$ and vectors orthogonal to $u_0$, the proof follows.
	\end{proof}
	The above consequence of Rauch comparison theorem then implies the following metric distortion inequality, which can be seen as a global version of the ones employed in the prior arts~\cite[Lemma 9]{dyer2015riemannian} and \cite[Theorem 2]{	zhang2018estimate}.
		\begin{proposition}[Restatement of Proposition~\ref{prop:rauch}] \label{formal:rauch}
	 	Let $x,y,z$ be points on Riemannian manifold $M$ with sectional curvatures lower bounded by $-\kappa<0$.
	 	Then, for the following inequality holds:
		$$\dd{y}{z} \leq \frac{\sinh(\sqrt{\kappa}\max\{\dd{x}{y},\dd{x}{z} \})}{\sqrt{\kappa}\max\{\dd{x}{y},\dd{x}{z} \}} \cdot \DD{x}{y}{z}\,.$$
	\end{proposition}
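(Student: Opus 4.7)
The plan is to compare the distance $d(y,z)$ against the length of the image under $\expm_x$ of the straight-line segment in $T_xM$ connecting $\lm{x}{y}$ and $\lm{x}{z}$, and then use Proposition~\ref{rauch} to bound the distortion caused by the exponential map at each point of the segment.

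First I would define the curve $\gamma : [0,1] \to T_xM$ by $\gamma(t) := (1-t)\lm{x}{y} + t\,\lm{x}{z}$ in the tangent space, and then consider its image $c(t) := \exm{x}{\gamma(t)}$ in $M$. Since $c$ joins $y$ and $z$, the metric distance is bounded by arc length:
\begin{equation*}
\dd{y}{z} \;\le\; \mathrm{length}(c) \;=\; \int_0^1 \norm{ d(\expm_x)_{\gamma(t)}(\gamma'(t)) } \, dt.
\end{equation*}
The velocity of $\gamma$ is constant and equal to $\lm{x}{z}-\lm{x}{y}$, so $\norm{\gamma'(t)} = \DD{x}{y}{z}$. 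Applying Proposition~\ref{rauch} to each tangent vector $\gamma(t)$ gives $\norm{d(\expm_x)_{\gamma(t)}(\gamma'(t))} \le \frac{\sinh(\sqrt{\kappa}\norm{\gamma(t)})}{\sqrt{\kappa}\norm{\gamma(t)}}\cdot\DD{x}{y}{z}$.

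Next I would bound $\norm{\gamma(t)}$ uniformly in $t$. By convexity of the norm, $\norm{\gamma(t)} \le (1-t)\norm{\lm{x}{y}} + t\,\norm{\lm{x}{z}} = (1-t)\dd{x}{y} + t\,\dd{x}{z} \le \max\{\dd{x}{y},\dd{x}{z}\}$. Because $s \mapsto \sinh(\sqrt{\kappa}s)/(\sqrt{\kappa}s)$ is monotonically non-decreasing on $(0,\infty)$ (it is a power series in $s^2$ with non-negative coefficients), this pointwise upper bound on $\norm{\gamma(t)}$ yields a uniform upper bound on the distortion factor, namely $\frac{\sinh(\sqrt{\kappa}\max\{\dd{x}{y},\dd{x}{z}\})}{\sqrt{\kappa}\max\{\dd{x}{y},\dd{x}{z}\}}$.

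Finally, pulling this constant outside the integral and using $\int_0^1 dt = 1$ gives the desired bound. There is no real obstacle beyond making sure that $\gamma$ stays inside the domain where $\expm_x$ is well-defined and that the monotonicity of $\sinh(s)/s$ is invoked correctly; on a Hadamard manifold the exponential is a diffeomorphism on the whole tangent space, and in general this is where the uniquely-geodesic assumption from Section~\ref{sec:riemnest} plays its role. The edge case $\max\{\dd{x}{y},\dd{x}{z}\}=0$ is trivial since then $y=z=x$ and both sides vanish (with $\sinh(s)/s\to 1$ as $s\to 0^+$).
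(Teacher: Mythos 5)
Your proof is correct and follows essentially the same route as the paper: both construct the straight-line path in $T_xM$ between $\lm{x}{y}$ and $\lm{x}{z}$, bound $\dd{y}{z}$ by the length of its image under $\expm_x$, apply Proposition~\ref{rauch} pointwise, and bound $\norm{\gamma(t)}$ by $\max\{\dd{x}{y},\dd{x}{z}\}$. You additionally make explicit the monotonicity of $s\mapsto\sinh(\sqrt{\kappa}s)/(\sqrt{\kappa}s)$ and the degenerate case, which the paper leaves implicit, but the argument is the same.
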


	\begin{proof}
	To upper bound $\dd{y}{z}$ in terms of $\DD{x}{y}{z}$, consider
	a path   $p:[0,1] \to \T_x M$ defined as $p(t) =(1-t)\cdot \lm{x}{y} + t\cdot \lm{x}{z}$.
	Then, its image  $\expm_x(p)$ is a path on $M$ connecting $y$ from $z$.
	By definition of the distance on the manifold, $\dd{y}{z}$ is clearly upper bounded by the length of $\expm_x(p)$.
	On the other hand, using Proposition~\ref{rauch}, the length of $\expm_x(p)$ can be upper bounded as follows (since $\norm{p'(t)} =\norm{\lm{x}{y}-\lm{x}{z}} = \DD{x}{y}{z}$):	
	\begin{align*}
	\int_{0}^1 \norm{\frac{d}{dt}\expm_x\big( p(t)\big)} dt &\leq \int_{0}^1 \onorm{d(\expm_x)_{p(t)}}\cdot \norm{p'(t)} dt \\
		&\leq S_\kappa(\max\{\dd{x}{y},\dd{x}{z} \}) \cdot \DD{x}{y}{z}\,,
		\end{align*}
		where the last inequality follows from  the fact that $\norm{p(t)} $ is upper bounded by $\max\{\norm{p(0)},\norm{p(1)}\}=\max\{\dd {x}{y},\dd{x}{z}\}$.
	\end{proof} 
	Next machinery is a user-friendly global trigonometric inequality from~\cite[Lemma 6]{zhang2016first} and \cite[Lemma 3.12]{cordero2001riemannian}:
	
	\begin{proposition}[Global trigonometric inequality]
		\label{lem:trig}
		Let $M$ be a Riemannain manifold with  sectional curvatures lower bounded by $-\kappa<0$.
		Let $x,y,z$ be the vertices of a geodesic triangle with the lengths of the opposite side being $a,b,c$, respectively,
		and $A$ be the angle of the triangle at the vertex $x$, then we have the following inequality:
		$$a^2 \leq \frac{\sqrt{\kappa}c}{\tanh(\sqrt{\kappa}c)} \cdot b^2 + c^2 -2bc \cos A\,.$$ 
	\end{proposition}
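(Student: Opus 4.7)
The plan is to reduce the inequality to the constant-curvature model space $\mathbb{H}^2_\kappa$ via Toponogov's hinge comparison theorem, and then verify the resulting inequality in the model space by a direct computation based on the hyperbolic law of cosines. Since the sectional curvature of $M$ is bounded below by $-\kappa$, Toponogov's hinge comparison applies: form a comparison hinge in $\mathbb{H}^2_\kappa$ with legs of lengths $b$ and $c$ meeting at angle $A$, and call its third side $\tilde{a}$. Then $a \le \tilde{a}$, so it suffices to prove $\tilde{a}^2 \le \phi(c)\, b^2 + c^2 - 2bc\cos A$ in $\mathbb{H}^2_\kappa$, where $\phi(c):=\sqrt{\kappa}c/\tanh(\sqrt{\kappa}c) \ge 1$ (since $\tanh x \le x$).

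\textbf{Model-space reduction to a one-variable claim.} After rescaling to $\kappa=1$, the hyperbolic law of cosines reads $\cosh\tilde{a} = \cosh b\cosh c - \sinh b\sinh c \cdot s$, with $s:=\cos A\in[-1,1]$. Thus both sides of the target inequality become functions of $s$ with $b,c$ fixed; set $F(s):=\tilde{a}(s)^2$ and $G(s):=\phi(c)b^2+c^2-2bc\,s$. Implicit differentiation gives $F'(s)=-2\tilde{a}\sinh b\sinh c/\sinh\tilde{a}$, and a second differentiation combined with the elementary identity $\sinh\tilde{a}-\tilde{a}\cosh\tilde{a}<0$ (equivalent to $\tanh\tilde{a}<\tilde{a}$) shows $F''(s)<0$. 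Consequently the difference $H:=G-F$ is convex on $[-1,1]$, so its minimum is attained either at one of the endpoints or at a unique interior critical point.

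\textbf{Endpoint and critical-point checks.} At $s=\pm 1$ the triangle degenerates to $\tilde{a}=|b-c|$ and $\tilde{a}=b+c$ respectively, and in both cases $H(\pm 1)=(\phi(c)-1)b^2\ge 0$. At an interior critical point $s^\star$, the first-order condition $H'(s^\star)=0$ rearranges to the symmetric identity $\tilde{a}/\sinh\tilde{a} = (b/\sinh b)(c/\sinh c)$; substituting this back into $H(s^\star)$ yields a one-variable inequality in $\tilde{a}$ alone, which I would verify by expanding $x/\tanh x$ around $0$ and exploiting the monotonicity of $x/\sinh x$.

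\textbf{Main obstacle.} The crux of the argument is the interior critical-point check in the last step: the coefficient $\phi(c)$ is sharply tuned, being tight in the Euclidean limit $b,c\to 0$ (where the inequality degenerates to the usual law of cosines $a^2=b^2+c^2-2bc\cos A$). A coarser Rauch-style factor such as $(\sinh(\sqrt{\kappa}c)/(\sqrt{\kappa}c))^2$, as in Proposition~\ref{formal:rauch}, fails to recover $\phi$, so the argument genuinely exploits the hinge geometry rather than merely bounding the differential of a single exponential map.
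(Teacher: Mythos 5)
The paper does not actually prove this proposition; the proof is simply a pointer to \citep[Section~3.1]{zhang2016first} (which in turn traces back to \citep{cordero2001riemannian}), so there is nothing internal to compare against. Evaluated on its own terms, your outline gets the reduction right but leaves the genuinely hard step undone.

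The Toponogov reduction to $\mathbb{H}^2_\kappa$ is correct, and so is the convexity computation: with $\cosh\tilde a = \cosh b\cosh c - s\sinh b\sinh c$, one indeed gets $F'(s)=-2\tilde a\sinh b\sinh c/\sinh\tilde a$ and $F''<0$ from $\tanh\tilde a<\tilde a$, so $H=G-F$ is convex in $s=\cos A$, and the endpoint values $H(\pm1)=(\phi(c)-1)b^2\ge 0$ check out. But for a \emph{convex} function, nonnegative endpoints tell you nothing about the minimum, which is attained at the interior critical point whenever one exists (and one generically does: $H'(-1)<0<H'(1)$ already when $b=c$). So the entire content of the proof is in the step you call the ``crux'' and then leave as ``which I would verify by expanding $x/\tanh x$ around $0$.'' Two problems there. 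First, after imposing $H'(s^\star)=0$, i.e.\ $\tilde a/\sinh\tilde a=(b/\sinh b)(c/\sinh c)$, you still have $b$ and $c$ as free parameters with one constraint linking them to $\tilde a$; this is a two-parameter family of inequalities, not ``a one-variable inequality in $\tilde a$ alone,'' so the proposed closing move is mis-stated. Second, the inequality at the critical point is razor-thin: expanding in small $b$ with $c$ fixed gives $\tilde a=c+\tfrac{b^2}{6(\coth c-1/c)}+O(b^4)$ and then $H(s^\star)=\phi(c)b^2-b^2c\coth c+O(b^4)=0+O(b^4)$, i.e.\ the leading $b^2$ terms cancel identically (recall $\phi(c)=c\coth c$). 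Numerically, $H(s^\star)\approx 3\times 10^{-3}$ at $(b,c)=(0.5,2)$ and $\approx 2\times 10^{-4}$ at $(b,c)=(0.1,3)$. A hand-wave about Taylor expansion and monotonicity of $x/\sinh x$ cannot close an estimate this tight; you would need to carry the expansion to the $b^4$ term and then separately cover the regime where $b$ is not small, and neither is sketched. As it stands, the proposal is a plausible reduction plus an unproved core lemma, so there is a genuine gap.
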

\begin{proof}
See \citep[Section 3.1]{zhang2016first}.
\end{proof}
Now based on these two machineries, we prove the improved metric distortion inequality (Lemma~\ref{lem:betterdist}):
\begin{lembox}[Formal statement of Lemma~\ref{lem:betterdist}]  \label{dist:formal}
		Let $x,y,z$ be points on Riemannian manifold $M$ with sectional curvatures lower bounded by $-\kappa<0$.
		For function $\widehat{\vd}:\re_{\geq 0}\to \re_{\geq 1}$ defined as
	\begin{align*}
		\widehat{\vd}(r) := \begin{cases}
		\min_{\eps>0}\max\left\{ 1+ \left(1+\eps^{-1}\right)^{2}\left(\frac{\sqrt{\kappa}r}{\tanh(\sqrt{\kappa}r)} -1\right),~\left(\frac{\sinh\left((1+\eps)\sqrt{\kappa}\cdot r\right)}{(1+\eps)\sqrt{\kappa}\cdot r}\right)^2 \right\} & \text{if $r>0$},\\
		1, &\text{if $r=0$,}
		\end{cases} 
	\end{align*}
	the following inequality holds: 
		$\dd{y}{z}^2\leq \widehat{\vd}(\dd{x}{y})\cdot \DD{x}{y}{z}^2$.
\end{lembox}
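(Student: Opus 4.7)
The plan is to parametrize the argument by a free variable $\eps>0$, perform a dichotomy that plays Proposition~\ref{formal:rauch} against Proposition~\ref{lem:trig}, and optimize over $\eps$ at the end. Write $a := \dd{y}{z}$, $b := \dd{x}{y} = r$, $c := \dd{x}{z}$, and let $A$ denote the angle at $x$ in $\triangle xyz$. Since $T_xM$ is Euclidean and $\lm{x}{y},\lm{x}{z}$ have lengths $b,c$ with included angle $A$, the Euclidean law of cosines gives the bookkeeping identity $\DD{x}{y}{z}^2 = b^2 + c^2 - 2bc\cos A$. I then split on whether $c \leq (1+\eps)b$ or $c > (1+\eps)b$: Rauch is tight in the balanced regime, whereas Rauch becomes loose in the unbalanced regime (since $\max\{b,c\}$ can blow up relative to $b$), so the trigonometric inequality has to take over.

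\emph{Balanced regime} ($c \leq (1+\eps)b$): Proposition~\ref{formal:rauch} gives $a^2 \leq S_\kappa(\max\{b,c\})\DD{x}{y}{z}^2$ with $S_\kappa(\rho) = (\sinh(\sqrt{\kappa}\rho)/(\sqrt{\kappa}\rho))^2$. Because $\sinh(\sqrt{\kappa}\rho)/(\sqrt{\kappa}\rho)$ is monotonically increasing on $[0,\infty)$ and $\max\{b,c\} \leq (1+\eps)r$, this upgrades to $a^2 \leq \bigl(\sinh((1+\eps)\sqrt{\kappa}r)/((1+\eps)\sqrt{\kappa}r)\bigr)^2 \DD{x}{y}{z}^2$, which is precisely the second argument of the max in $\widehat{\vd}(r)$.

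\emph{Unbalanced regime} ($c > (1+\eps)b$): Proposition~\ref{lem:trig} applied to $\triangle xyz$, with the proposition's ``$c$'' playing the role of our $b$, gives $a^2 \leq \frac{\sqrt{\kappa}b}{\tanh(\sqrt{\kappa}b)}c^2 + b^2 - 2bc\cos A$; subtracting the law of cosines to absorb the cross-term rewrites this as $a^2 \leq \DD{x}{y}{z}^2 + \bigl(\frac{\sqrt{\kappa}r}{\tanh(\sqrt{\kappa}r)} - 1\bigr) c^2$. The remaining task is to control $c^2$ by $\DD{x}{y}{z}^2$, which I handle via the reverse triangle inequality in $T_xM$: $\DD{x}{y}{z} = \norm{\lm{x}{y}-\lm{x}{z}} \geq c - b > c\eps/(1+\eps)$ in this regime, so $c^2 \leq (1+\eps^{-1})^2 \DD{x}{y}{z}^2$. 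Substituting produces the first argument of the max in $\widehat{\vd}(r)$.

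Taking the pointwise maximum of these two estimates yields a single bound valid without case distinction, and since $\eps>0$ was arbitrary, the infimum over $\eps$ delivers the claimed $\widehat{\vd}(\dd{x}{y})$. The boundary case $r=0$ forces $x=y$, whence $a=c=\DD{x}{y}{z}$ and $\widehat{\vd}(0)=1$ holds trivially. The main care in the argument lies in the unbalanced regime: recognizing that the $-2bc\cos A$ term in the trigonometric inequality \emph{telescopes exactly} with the law of cosines into $\DD{x}{y}{z}^2$, leaving only $c^2$ to handle, and then exploiting the regime hypothesis $c > (1+\eps)b$ to perform that control via the elementary tangent-space reverse triangle inequality; everything else is a direct invocation of the two stated geometric inputs.
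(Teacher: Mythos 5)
Your proposal is correct and follows essentially the same route as the paper's own proof: the identical $\eps$-parametrized dichotomy on whether $\dd{x}{z}\lessgtr(1+\eps)\dd{x}{y}$, Rauch with monotonicity of $\sinh(t)/t$ in the balanced case, and the trigonometric lemma telescoped against the law of cosines plus the tangent-space reverse triangle inequality in the unbalanced case, followed by optimization over $\eps$. The only cosmetic difference is that you say ``infimum'' where the paper writes $\min$; a continuity-and-boundary argument (both branches of the max blow up as $\eps\to 0^+$ or $\eps\to\infty$) shows the minimum is attained, so this is immaterial.
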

 
 	\begin{remark}[Properties of $\widehat{\vd}$]\label{rmk:express}
	Note that $\widehat{\vd}\leq \vd$ since $\vd$ corresponds to choosing $\eps=1$ for all $r$ in $\widehat{\vd}$. 
	Hence Lemma~\ref{dist:formal} implies Lemma~\ref{lem:betterdist}.
		In particular, the followings are true about $\widehat{\vd}(r)$:
		\begin{enumerate}
		    \item $\lim_{r\to 0+}\widehat{\vd}(r) = 1$.
		    \item For any $\eps>0$, there exists $R_{\eps}>0$ such that  $\widehat{\vd}(r) \leq\left(\frac{\sinh\left((1+\eps)\sqrt{\kappa}\cdot r\right)}{(1+\eps)\sqrt{\kappa}\cdot r}\right)^2 $ for $r>R_{\eps}$.
		\end{enumerate}
  Hence, one can essentially regard $\widehat{\vd}$ as a \emph{proxy} for $S_{\kappa}$.
	\end{remark}

	\begin{proof}[Proof of Lemma~\ref{dist:formal}]
	Let us fix an arbitrary constant $\eps>0$. 
	We will separately handle two cases: (i) $ (1+\eps)\cdot\dd{x }{y} < \dd{x}{z}$ and (ii) $ (1+\eps)\cdot \dd{x}{y} \geq    \dd{x }{z}$. 
	Let us begin with the first case.
Applying the trigonometric lemma to $\triangle xyz$,  and letting $\tri{} := \frac{\sqrt{\kappa}\dd{x}{y}}{\tanh(\sqrt{\kappa}\dd{x}{y})}$, we obtain:
		\begin{align*}
		\dd{y}{z}^2 &\leq \dd{x}{y}^2 + \tri{}\cdot \dd{x}{z}^2 -2\inp{\lm{x}{y}}{\lm{x}{z}}\\
	&=(\tri{}-1)\cdot \dd{x}{z}^2 + \dd{x}{y}^2 +  \dd{x}{z}^2 -2\inp{\lm{x}{y}}{\lm{x}{z}}\\
	&=(\tri{}-1)\cdot\dd{x}{z}^2 + \DD{x}{y}{z}^2\,.
		\end{align*}
		where the last line follows from the Euclidean law of cosine.
	On the other hand, from the Euclidean triangle inequality (consider the triangle $\triangle xyz$ in the tangent space $\T_{x}M$), $\DD{x}{y}{z} \geq (\dd{x }{z}-\dd{x}{y})>  \frac{\eps}{1+\eps} \cdot \dd{x}{z}$.
		Hence, combining these two, we get
		\begin{align}
	  \dd{y}{z}^2 &\leq (\tri{}-1)\cdot \dd{x}{z}^2 + \DD{x}{y}{z}^2 \nonumber\\
		&\leq \left(1+\eps^{-1}\right)^{2}\cdot \left(\tri{}-1\right) \cdot \DD{x}{y}{z}^2  + \DD{x }{y}{z}^2 \nonumber\\
		&= \left[1+\left(1+\eps^{-1}\right)^{2}\cdot  \left(\tri{}-1\right) \right] \cdot \DD{x}{y}{z}^2\,. \label{ineq:1}
		\end{align}
		Next, for the case  $(1+\eps)\cdot \dd{x}{y} \geq   \dd{x }{z}$,  Proposition~\ref{formal:rauch} implies:
		\begin{align}
		\dd{y}{z}^2 &\leq \left(\frac{\sinh\left((1+\eps)\sqrt{\kappa}\cdot \dd{x}{y} \right)}{(1+\eps)\sqrt{\kappa}\cdot \dd{x}{y}}\right)^2\cdot \DD{x}{y}{z}^2\,. \label{ineq:2}
		\end{align}
		Therefore, combining \eqref{ineq:1} and \eqref{ineq:2}, the proof is completed.
	\end{proof}

	\section{Proofs/calculations missing from the main text}
	\subsection{Proofs of folklore analyses of gradient steps (Propositions \ref{folk:grad} and \ref{folk :mirror})}
	\label{pf:folk}
 
 {\bf Proof of Proposition~\ref{folk:grad}:}
 By the $L$-smoothness of $f$, we have
$f(y) \leq f(x) +\inp{\nabla f(x)}{y-x} + \frac{L}{2}\norm{x-y}^2 = f(x) +\inp{\nabla f(x)}{-s\nabla f(x)} + \frac{L}{2}\norm{-s\nabla f(x)}^2 = f(x) -s\left(1 -\frac{Ls}{2} \right)\norm{\nabla f(x)}^2$.  \hfill\ensuremath{\blacksquare}\\
  {\bf Proof of Proposition~\ref{folk :mirror}:}
 The proof is an elementary calculation: $\norm{z-x_*}^2 = \norm{z-x+x-x_*}^2=\norm{z-x}^2 +\norm{x-x_*}^2 +2\inp{z-x}{x-x_*}= s^2\norm{\nabla f(x)}^2 +\norm{x-x_*}^2 +2s\inp{\nabla f(x)}{x_*-x}$.   \hfill\ensuremath{\blacksquare}

	\subsection{Derivation of the upper bound on the potential difference \eqref{upper:1}}
	\label{app:derive}
	First, one can easily express \eqref{eq:md} in terms of $\nabla,X,W$ using Proposition~\ref{folk :mirror}:
	\begin{align}
	    \eqref{eq:md}&=  B_{t+1}\cdot  \norm{z_{t+1}-x_*}^2-B_t\cdot \norm{z_t-x_*}^2 \nonumber\\
	    &= B_{t+1}\cdot \norm{X+\wm W}^2 + B_{t+1}\sm^2 \cdot \norm{\nabla}^2 -2B_{t+1}\sm\cdot \inp{\nabla}{X+\wm W} -B_t \cdot \norm{W+X}^2 \nonumber\\
\begin{split}
&= (B_{t+1}-B_t) \cdot \norm{X}^2
+ (\wm^2 B_{t+1}-B_t)\cdot \norm{W}^2 + \sm^2B_{t+1}\cdot \norm{\nabla}^2\\
&\quad+2(\wm B_{t+1}-B_t)\cdot \inp{X}{W}-2\wm\sm B_{t+1} \inp{W}{\nabla}-2\sm B_{t+1}\cdot \inp{X}{\nabla}\,.
\end{split} \label{md:final}
\end{align}
For  \eqref{eq:gd}, we apply Propostion~\ref{folk:grad} and rearrange terms to obtain:
\begin{align}
    \eqref{eq:gd}&\leq A_{t+1} \cdot \left(f(x_{t+1})-f(x_*)\right)-\drop A_{t+1}\cdot \norm{\nabla}^2 -A_t\cdot \left( f(y_t)-f(x_*) \right) \nonumber\\
    &=A_t  \cdot \left(f(x_{t+1})-f(y_t)\right)+(A_{t+1}-A_t)\cdot \left( f(x_{t+1})-f(x_*)\right)-\drop A_{t+1}\cdot \norm{\nabla}^2\,. \nonumber
\end{align}
Now using the inequality $f(u)-f(v)\leq \inp{\nabla f(u)}{u-v} -\frac{\mu}{2}\norm{u-v}^2$ for all $u,v$ ($\because$  $\mu$-strong convexity of $f$) together with the identity $x_{t+1}-y_t =\frac{\wg}{1-\wg} (z_t-x_{t+1}) = \frac{\wg}{1-\wg} W$, 
one can also derive an upper bound on \eqref{eq:gd} in terms of $\nabla,W,X$: 
\begin{align}
 \eqref{eq:gd} &\leq \frac{\wg}{1-\wg}A_t \cdot \inp{\nabla}{W} - \frac{\mu}{2}\left(\frac{\wg}{1-\wg}\right)^2 A_t\cdot  \norm{W}^2  \nonumber\\
 &\quad+(A_{t+1}-A_t)\cdot \inp{\nabla}{X}-\frac{\mu}{2}(A_{t+1}-A_t)\cdot \norm{X}^2 -\drop A_{t+1}\cdot \norm{\nabla}^2 \nonumber\\
 \begin{split}
 &= -  \frac{\mu}{2}\left(\frac{\wg}{1-\wg}\right)^2 \cdot A_t\cdot  \norm{W}^2 -\frac{\mu}{2}(A_{t+1}-A_t)\cdot \norm{X}^2-\drop A_{t+1}\cdot \norm{\nabla}^2\\
 &\quad+ \frac{\wg}{1-\wg}A_t \cdot \inp{W}{\nabla} +(A_{t+1}-A_t)\cdot \inp{X}{\nabla}\,.
 \end{split} \label{gd:final}
\end{align}
Putting \eqref{md:final} and \eqref{gd:final} together, we obtain \eqref{upper:1}.

\subsection{Proof of Theorem~\ref{thm:euclidean}}
\label{pf:thm1}
 From the equality version of \eqref{b:from:a}, i.e.,
$B_{t+1} = (A_{t+1}-A_t)^2/(4\drop \cdot  A_{t+1})$.
one can easily express $B_{t+1}$ in terms of $\cc_{t+1}$.
More specifically, using the relation  $ 1-\cc_{t+1}:=A_{t}/A_{t+1}$, we have $ (\frac{A_{t+1}-A_t}{A_{t+1}})^2= \cc_{t+1}^2$, and hence:
\begin{align}
       B_{t+1} = \left(\frac{A_{t+1}-A_t}{A_{t+1}}\right)^2\cdot \frac{A_{t+1}}{4\drop}  =\frac{\cc_{t+1}^2}{1-\cc_{t+1}} \cdot \frac{A_t}{4\drop } \,. \label{eq:b}
\end{align}
From this, one can also conclude that 
\begin{align}
    \frac{A_{t+1}}{B_{t+1}} = \frac{A_t}{(1-\cc_{t+1})}\cdot \frac{1-\cc_{t+1}}{\cc_{t+1}^2}\cdot \frac{4\drop}{A_t} = \frac{4\drop}{\cc_{t+1}^2}\,. \label{eq:ratio}
\end{align}
Now from \eqref{stepsizes}, \eqref{eq:recur}, \eqref{eq:b} and \eqref{eq:ratio}, one can express $\wg_{t+1},\wm_{t+1},\sm_{t+1}$ in terms of $\cc_{t+1}$:
\begin{align*}
    \sm_{t+1} &\overset{\eqref{stepsizes}}{=} \frac{A_{t+1}-A_t}{2B_{t+1}} = \frac{A_{t+1}-A_t}{A_{t+1}}\cdot \frac{A_{t+1}}{2B_{t+1}} \overset{\eqref{stepsizes}\&\eqref{eq:ratio}}{=} \cc_{t+1}\cdot \frac{2\drop}{\cc_{t+1}^2} =2\drop\cc_{t+1}^{-1}\,,\\
    \wm_{t+1} &\overset{\eqref{stepsizes}}{=} \frac{B_t}{B_{t+1}} \overset{\eqref{eq:b}}{=} \frac{1-\cc_{t+1}}{\cc_{t+1}^2}\cdot 4\drop \cdot \frac{B_t}{A_t}\overset{\eqref{eq:recur}}{=} \frac{\cc_{t+1}-2\mu\drop}{\cc_{t+1}} = 1-2\mu\drop\cc_{t+1}^{-1}\,,\quad \text{and}\\
    \frac{\wg_{t+1}}{1-\wg_{t+1}}&\overset{\eqref{stepsizes}}{=}\frac{(A_{t+1}-A_t)B_t}{A_t B_{t+1}} = \frac{A_{t+1}-A_t}{A_{t+1}}\cdot \frac{B_t}{A_t}\cdot \frac{A_{t+1}}{B_{t+1}}\\
    &\overset{\eqref{eq:recur}\&\eqref{eq:ratio}}{=} \cc_{t+1}\cdot \frac{\cc_{t+1}(\cc_{t+1}-2\mu\drop) }{4\drop (1-\cc_{t+1})} \cdot \frac{4\drop}{\cc_{t+1}^2}=\frac{\cc_{t+1}- 2\mu\drop }{1-\cc_{t+1}}\,.
\end{align*}
With the above choices of parameters, one can easily check that $\wg_{t+1},\wm_{t+1}$ both lie in $[0,1]$ since $\cc_{t+1}\in [2\mu\drop ,1)$. In particular,  $\wm_{t+1}^2 B_{t+1}\leq \wm_{t+1} B_{t+1} = B_t$, implying $C_1\leq 0$.
Therefore, the above choices of parameters satisfy $C_1,C_2,C_3\leq 0$ and $C_4,C_5,C_6=0$, and consequently, $\Phi_{t+1}\leq \Phi_t$ since $\Phi_{t+1}-\Phi_t\leq \eqref{upper:1}$.
This completes the proof of Theorem~\ref{thm:euclidean}.

\subsection{Analysis of recursive relations (\eqref{recur:xi} and \eqref{r:recur:xi})}
\label{pf:conv}
For simplicity, we replace $2\mu\drop$ with a constant $a\in(0,1)$ and consider:
\begin{align}
\frac{\cc_{t+1}(\cc_{t+1}-a) }{1-\cc_{t+1}} = \frac{1}{\ddr{} }\cdot \cc_{t}^2\,. \label{appen:recur}
\end{align}
In particular, $\ddr{}=1$ and $a=2\mu\drop$ recovers \eqref{recur:xi}.
Below, we state and prove a formal statement of Lemma~\ref{lem:conv} which demonstrates the geometric convergence of the $\cc_t$ to the fixed point.
\begin{lembox}  \label{lem:appen}
For any constants $\ddr{}\geq 1$, $a\in(0,1)$, and
 an  initial value $\cc_0\geq 0$, 
Then the followings properties are true about \eqref{appen:recur}: 
\begin{enumerate}
\item $\cc(\ddr{}):=(\sqrt{(\ddr{}-1)^2+4\ddr{}a }-(\ddr{}-1))/2$ is the unique fixed point of \eqref{appen:recur}. 
    \item $\lim_{t\to \infty}\cc_t \searrow \cc(\ddr{})$  if $\cc_0>\cc(\ddr{})$; $\cc_t \equiv \cc(\ddr{})$ if $\cc_0=\cc(\ddr{})$; and $\lim_{t\to \infty}\cc_t \nearrow \sqrt{2\mu\drop}$ if $0\leq \cc_0<\cc(\ddr{})$.
    \item $|\cc_t -\cc(\ddr{})| \leq \left(\frac{1}{\sqrt{\ddr{}}} \left(1-\frac{4}{5+\sqrt{5}}\cdot \frac{a}{\sqrt{\ddr{}}}\right)\right)^t |\cc_0 -\cc(\ddr{})|$, i.e., the convergences are geometrical.
\end{enumerate}
\end{lembox}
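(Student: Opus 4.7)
The plan is to rewrite the recursion as $\phi(\cc_{t+1}) = \psi(\cc_t)$ with $\phi(v) := v(v-a)/(1-v)$ (strictly increasing from $0$ to $+\infty$ on $[a,1)$) and $\psi(w) := w^{2}/\ddr{}$, so that $F(w) := \phi^{-1}(\psi(w))$ is well-defined on $[0,1)$ with image in $[a,1)$, and then to proceed in three steps. First, setting $\phi(v)=\psi(v)$ and clearing denominators reduces to $v\bigl[v^{2}+(\ddr{}-1)v-\ddr{} a\bigr]=0$, whose unique positive root yields the fixed point $r := \cc(\ddr{})$ of the stated form; the identity $\ddr{} a = r(r+\ddr{}-1)$, equivalently $r-a = r(1-r)/\ddr{}$, will be used throughout.

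For monotonicity, a direct computation gives $\phi(v)-\psi(v) = v[v^{2}+(\ddr{}-1)v-\ddr{} a]/[\ddr{}(1-v)]$, so $\phi<\psi$ on $(0,r)$ and $\phi>\psi$ on $(r,1)$. If $\cc_t<r$, then $\phi(\cc_{t+1})=\psi(\cc_t)>\phi(\cc_t)$ forces $\cc_t<\cc_{t+1}$, while $\psi(\cc_t)<\psi(r)=\phi(r)$ forces $\cc_{t+1}<r$; the case $\cc_t>r$ is symmetric. Hence $\{\cc_t\}$ is monotone and bounded, therefore convergent, and continuity of $F$ identifies the limit as $r$ (the $v=0$ root of $\phi=\psi$ is excluded since $\cc_t\in[a,1)$ for $t\ge 1$).

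For the geometric rate, I would factor $\phi(v)-\phi(r) = (v-r)[v+r-vr-a]/[(1-v)(1-r)]$; the identity for $a$ collapses the bracket to $(1-r)(v+r/\ddr{})$, giving $\phi(v)-\phi(r) = (v-r)(v+r/\ddr{})/(1-v)$. Combined with $\psi(v)-\psi(r) = (v-r)(v+r)/\ddr{}$, equating $\phi(\cc_{t+1})-\phi(r) = \psi(\cc_t)-\psi(r)$ yields the exact ratio
\[
\frac{\cc_{t+1}-r}{\cc_t-r} \;=\; Q_t \;:=\; \frac{(\cc_t+r)(1-\cc_{t+1})}{\ddr{}\,\cc_{t+1}+r}.
\]
At the fixed point $Q_t = 2(1-r)/(\ddr{}+1)$, which by $(\sqrt{\ddr{}}-1)^{2}\ge 0$ is at most $1/\sqrt{\ddr{}}$, already hinting at the target rate $\lambda$.

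The hard part is showing $Q_t \le \lambda := \frac{1}{\sqrt{\ddr{}}}\bigl(1-\tfrac{4a}{(5+\sqrt 5)\sqrt{\ddr{}}}\bigr)$ uniformly over the invariant range $\cc_t\in [a,\phi^{-1}(1/\ddr{})]$. My plan is to parameterize by $v := \cc_{t+1}$, use $\cc_t = \sqrt{\ddr{}\, v(v-a)/(1-v)}$ to eliminate $\cc_t$, and reduce to a one-variable optimization of $Q_t(v)$; the constraints $a<r\le \sqrt{\ddr{} a}$ (both from the defining quadratic for $r$), together with the monotonicity above, confine the worst case to an explicit endpoint or interior critical point. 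The specific constant $4/(5+\sqrt 5) = 1-1/\sqrt 5$ arises from the discriminant of a quadratic encountered in resolving this optimization, and this elementary but careful algebraic bookkeeping is where the bulk of the technical labor lies.
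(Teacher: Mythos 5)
Your setup (writing the recursion as $\phi(\cc_{t+1})=\psi(\cc_t)$, solving for the fixed point, and deducing monotonic convergence from the sign of $\phi-\psi$) matches the paper's, and items~1 and~2 of the lemma are handled correctly. For item~3, however, you depart from the paper's route --- the paper computes $\ctt(v)=\phi^{-1}(\psi(v))$ in closed form, writes $\ctt'(v)=\tfrac{1}{\sqrt{\ddr{}}}\,\cde(v/\sqrt{\ddr{}})$ for an explicit $\cde$, and proves the uniform derivative bound $\cde(v)<1-\tfrac{4}{5+\sqrt5}v$ by a two-case analysis ($v\le\sqrt a$ vs.\ $v>\sqrt a$), after which the mean value theorem gives the geometric rate. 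You instead factor $\phi(v)-\phi(r)$ and $\psi(w)-\psi(r)$ exactly and arrive at the identity
\[
\frac{\cc_{t+1}-r}{\cc_t-r}=Q_t=\frac{(\cc_t+r)(1-\cc_{t+1})}{\ddr{}\,\cc_{t+1}+r},
\]
which is a clean and correct step (I checked the algebra, including the collapse of $v+r-vr-a$ to $(1-r)(v+r/\ddr{})$) and arguably more transparent than differentiating $\ctt$.

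The problem is that the proof stops exactly where the work begins. You evaluate $Q_t$ only at the fixed point, where it equals $2(1-r)/(\ddr{}+1)$, and show this is $\le 1/\sqrt{\ddr{}}$ --- but this is strictly weaker than the claimed contraction factor $\lambda=\tfrac{1}{\sqrt{\ddr{}}}\bigl(1-\tfrac{4a}{(5+\sqrt5)\sqrt{\ddr{}}}\bigr)$, and even at the fixed point you do not verify $2(1-r)/(\ddr{}+1)\le\lambda$. More importantly, item~3 requires $Q_t\le\lambda$ uniformly along the orbit, and you explicitly defer this (``My plan is to parameterize by $v:=\cc_{t+1}$ \dots reduce to a one-variable optimization \dots this is where the bulk of the technical labor lies'') without carrying it out. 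That optimization is not routine: after eliminating $\cc_t$ via $\cc_t=\sqrt{\ddr{}\,\phi(\cc_{t+1})}$ one still has a nontrivial function of $v$ with nested square roots, and neither the claimed invariant range nor the worst-case location is established. As submitted, item~3 is therefore a statement of intent rather than a proof; to close it you would either need to complete the optimization you sketch (and handle $\cc_0$ possibly lying outside $[a,1)$, which neither your argument nor, strictly speaking, the paper's MVT argument addresses directly), or fall back to the paper's derivative bound, which the paper proves in Proposition~C.1 by the $\sqrt a$ case split.
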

\begin{proof} 
Define $\cho(v):=\frac{v(v-a)}{1-v}$ and $\cht(v):=\frac{1}{\ddr{}}v^2$.
Then, using these notations, \eqref{appen:recur} becomes
\begin{align} \label{appen:equiv}
\cho(\cc_{t+1})=\cht(\cc_t)\,.
\end{align}
Now, in order to understand \eqref{appen:equiv}, let us study the properties of the two functions.
First, it is straightforward to check  that $\cht$ is increasing on $\re_{\geq 0}$ and $\cho$ is increasing on $[a,1)$ with $\cho(a)=0$ and $\lim_{v\to 1-} \cho(v) = \infty$. 
Indeed,  $\cho$ is increasing since  $\frac{d}{dv}\cho(v)=\frac{1-a}{(1-v)^2}-1 \geq \frac{1}{1-a} -1 >0$.

Hence, one can consider the inverse of the restriction $\cho|_{[a,1)}$.
We will write the inverse as $\cho^{-1}$.
Letting $\ctt:=\cho^{-1}\circ \cht$, \eqref{appen:equiv} can be written as the standard recursive relation form
\begin{align}
    \cc_{t+1} = \ctt (\cc_{t})\,. \label{appen:stan}
\end{align}
Note that $\ctt:\re_{\geq 0} \to [a,1)$, and hence, $\cc_t\in (a,1]$ for $t\geq 1$. 
Solving $\cho(v)=\cht(v)$ on $v\in[a,1)$ yields $v=\cc(\ddr{})$.
Hence, $\cc(\ddr{})$ is the unique fixed point of \eqref{appen:stan}.
From this, together with the fact that both $\cho$ and $\cht$ are increasing,  we have $\cho<\cht$  for $x\in [a,\cc(\ddr{}))$, and $\cho>\cht$  for $x\in (\cc(\ddr{}),1)$.
Hence, $\{\cc_{t}\}$ is increasing if $\cc_0 \in [0,\cc(\ddr{}))$ and decreasing if $\cc_0>\cc(\ddr{})$.

Now we prove the geometric convergence of \eqref{appen:stan} to $\cc(\ddr{})$.
To that end, let us first compute $\ctt$:
  	    \begin{align*}
	        \ctt(v)= \cho^{-1}(\cht(v)) = \cho^{-1}(v^2/\ddr{}) = \frac{1}{2}\left( \sqrt{(v^2/\ddr{}-a)^2 +4v^2/\ddr{}}-(v^2/\ddr{}-a) \right)\,.
	    \end{align*}
By mean value theorem, the key to showing geometric convergences of \eqref{appen:stan} is to bound the derivative of $\ctt$.
In particular, if we can establish that $|\ctt'(v) | \leq K <1$ for $v\in[a,1)$, then we have  
\begin{align} \label{appen:mvt}
    \left|\cc_{t+1}-\cc(\ddr{})\right| = \left|\ctt(\cc_t)- \ctt(\cc(\ddr{})) \right| \leq K\cdot \left| \cc_t -\cc(\ddr{}) \right|\,.
\end{align}
Defining $\cde(v) =\frac{v(v^2-a)+2v}{\sqrt{(v^2-a)^2 +4v^2}} - v$,	one can express $\ctt$ in terms of $\cde$:
\begin{align*}
	        \ctt'(v) &= \frac{\frac{v}{\ddr{}}(v^2/\ddr{}-a)+2\frac{v}{\ddr{}}}{\sqrt{(v^2/\ddr{}-a)^2 +4v^2/\ddr{}}} - \frac{v}{\ddr{}} = \frac{1}{\sqrt{\ddr{}}}\cdot \left( \frac{\frac{v}{\sqrt{\ddr{}}}(v^2/\ddr{}-a)+2\frac{v}{\sqrt{\ddr{}}}}{\sqrt{(v^2/\ddr{}-a)^2 +4v^2/\ddr{}}} - \frac{v}{\sqrt{\ddr{}}}\right)\\
	        &= \frac{1}{\sqrt{\ddr{}}}\cdot \cde(v/\sqrt{\ddr{}})
	    \end{align*}
 Hence, it suffices to show that $\cde(v)<1$ for $v\in (0,1)$.
 \begin{proposition}\label{prop:der}
  The following inequality holds for $v\in(0,1)$: $0\leq\cde(v) <1-\frac{4}{5+\sqrt{5}}\cdot v$.
\end{proposition}
\begin{proof}
$\cde(v)\geq 0$ trivially holds since, $\ctt$ is a composition of increasing functions and hence increasing.
Now let us prove the upper bound.
We first consider the case $a<v\leq\sqrt{a}$. 
Since $v^2\leq a$, we have
\begin{align*}
    \cde(v)&=\frac{-v(a-v^2)+2v}{\sqrt{(v^2-a)^2 +4v^2}} - v \leq \frac{ 2v}{\sqrt{(v^2-a)^2 +4v^2}} - v \leq 1-v\,.
\end{align*}
Next, consider the case $v>\sqrt{a}$. Then, $v^2>a$, and hence
\begin{align*}
    \cde(v)&=\frac{v(v^2-a)+2v}{\sqrt{(v^2-a)^2 +4v^2}} - v = \frac{ 2v}{\sqrt{(v^2-a)^2 +4v^2}} - v \cdot  \frac{\sqrt{(v^2-a)^2 +4v^2}-(v^2-a)}{\sqrt{(v^2-a)^2 +4v^2}}\\
    &= \frac{ 2v}{\sqrt{(v^2-a)^2 +4v^2}} - v \cdot  \frac{4v^2}{ (v^2-a)^2 +4v^2 +(v^2-a)\sqrt{(v^2-a)^2 +4v^2}}\\
    &\overset{(\clubsuit)}{\leq} 1 - v \cdot  \frac{4v^2}{ v^2 +4v^2 +v\sqrt{v^2 +4v^2}} = 1- \frac{4 }{5+\sqrt{5}}\cdot v\,.
\end{align*}
where ({\footnotesize $\clubsuit$}) follows since $v\in (\sqrt{a},1)$; in particular, it follows that $0\leq v^2-a \leq v^2 \leq v$.
Combining the two cases, we complete the proof.
\end{proof}
 From Proposition~\ref{prop:der} and \eqref{appen:mvt}, the geometric convergence follows.
 \end{proof}
 
	 \subsection{Proof of Theorem~\ref{thm:riem}}
	 \label{pf:thm2}
	 
	 We first introduce the definitions of geodesical smoothness and (strong) convexity for completeness.
	For simplicity, we assume that the function $f:M\to \re$ is differentiable throughout the definitions, and following the notation for the Euclidean case, we will denote by $\nabla f(x)$ the gradient of $f$ at $x$.
 
	 	 \begin{definition}[Geodesical (strong) convexity]   \label{def:gconvex}$f$ is said to be geodesically $\mu$-strongly convex if 
$$ f(y)\geq f(x) +\inp{\nabla f(x) }{\lm{x}{y}}_x + \frac{\mu}{2}\dd{x}{y}^2\quad \text{for any $x, y \in M$,}$$
where $\inp{\cdot}{\cdot }_x$ denotes the inner product in the tangent space of $x$ induced by the Riemannian metric. 
	 \end{definition}
	 
	 \begin{definition}[Geodesical smoothness]
	  $f: M\to \re$ is said to be geodesically $L$-smooth if  
\begin{align*}
    f(y) \leq f(x) + \inp{\nabla f(x)}{\lm{x}{y}}_x +\frac{L}{2}\dd{x}{y}^2\quad\text{for any $x,y\in M$.}
\end{align*}
An Equivalent definition is 
\begin{align*}
    \norm{\nabla f(x) - \Gamma_y^x \nabla f(y)}_x \leq L\cdot \dd{x}{y}\quad\text{for any $x,y\in M$,}
\end{align*}
where $\Gamma_y^x$ is the parallel transport from $y$ to $x$.
	 \end{definition} 
	 Now with these definitions, one can establish analogues of Propositions \ref{folk:grad} and \ref{folk :mirror}:
	 \begin{proposition} \label{r:folk:grad}
	 Let $y =\exm{x}{-s\cdot \nabla f(x)}$. If $f$ is geodesically $L$-smooth, then 
$f(y)-f(x) \leq -s\left( 1 - Ls/2 \right)  \norm{\nabla f(x) }_x^2$.
 \end{proposition}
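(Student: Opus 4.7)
The plan is to mimic the Euclidean proof of Proposition~\ref{folk:grad} almost verbatim, using the geodesic $L$-smoothness inequality in place of its Euclidean counterpart. The only substantive new ingredient is that the exponential map is radially isometric, which lets us convert $\dd{x}{y}$ into $s\cdot\norm{\nabla f(x)}_x$.

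Concretely, I would proceed as follows. First, observe that by definition of the update, $y=\exm{x}{-s\cdot\nabla f(x)}$, so $\lm{x}{y}=-s\cdot\nabla f(x)\in T_xM$. Since the exponential map is radially isometric along the geodesic emanating from $x$ in the direction $-\nabla f(x)$, we have $\dd{x}{y}=\norm{\lm{x}{y}}_x = s\cdot\norm{\nabla f(x)}_x$. Next, I apply the defining inequality of geodesic $L$-smoothness:
\begin{equation*}
f(y) \leq f(x) + \inp{\nabla f(x)}{\lm{x}{y}}_x + \tfrac{L}{2}\dd{x}{y}^2.
\end{equation*}
Substituting $\lm{x}{y}=-s\cdot\nabla f(x)$ and $\dd{x}{y}^2 = s^2\norm{\nabla f(x)}_x^2$, the middle term becomes $-s\norm{\nabla f(x)}_x^2$ and the last term becomes $\tfrac{Ls^2}{2}\norm{\nabla f(x)}_x^2$. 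Combining yields
\begin{equation*}
f(y)-f(x) \leq -s\bigl(1-Ls/2\bigr)\norm{\nabla f(x)}_x^2,
\end{equation*}
which is the desired bound.

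There is essentially no obstacle here: the statement is the direct Riemannian translation of Proposition~\ref{folk:grad}, and the only non-Euclidean subtlety, namely that the inner product and the norm are those on the tangent space $T_xM$, is already built into Definition~\ref{def:gconvex} of geodesic smoothness. The key structural reason the proof goes through unchanged is that the entire computation takes place relative to the single base point $x$, so no parallel transport between different tangent spaces is required. This is consistent with the discussion in \S\ref{sec:poten} of always rooting the vectors $\nabla$, $X$, $W$ at a common point $x_{t+1}$ when generalizing to the Riemannian setting.
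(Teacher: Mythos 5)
Your proof is correct and follows the same route the paper intends: the paper's proof simply states that it is ``identical to that of Proposition~\ref{folk:grad},'' i.e., substitute $\lm{x}{y}=-s\nabla f(x)$ and $\dd{x}{y}=s\norm{\nabla f(x)}_x$ into the geodesic $L$-smoothness inequality. Your spelled-out version, including the explicit appeal to radial isometry of the exponential map to justify $\dd{x}{y}=\norm{\lm{x}{y}}_x$, is exactly the right fleshing-out of that one-liner.
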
 
 \begin{proof}
 Identical to that of Proposition~\ref{folk:grad}.
 \end{proof}
\begin{proposition} \label{r:folk :mirror}
  Let $z=  \exm{u}{v-s\cdot\nabla f(u)}$ for some vector $v\in T_x M$.
   Then, for any $x_*$, 
   $\DD{u}{z}{x_*}^2- \DD{u}{\exm{u}{v}}{x_*}^2 = s^2 \norm{\nabla f(u)}_u^2 + 2s \inp{\nabla f(u)}{\lm{u}{x_*}-v}_u$.
 \end{proposition}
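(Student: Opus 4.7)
The plan is to reduce the identity to an elementary Euclidean computation in the tangent space $T_u M$. The key observation is that the projected distance $\DD{u}{\cdot}{\cdot}$ in Definition~\ref{def:pd} is defined intrinsically as a Euclidean norm on $T_u M$, so once both points of interest are pulled back to $T_u M$ via $\lm{u}{\cdot}$, all Riemannian structure disappears and only the inner product on $T_u M$ remains. This is precisely the reason the potential function~\eqref{r:def:poten} was chosen in terms of projected distances rather than manifold distances.

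First, I would identify the two tangent vectors that enter the projected distances. Since $z = \exm{u}{v - s\nabla f(u)}$ and the manifold is uniquely geodesic (so $\lm{u}{\cdot}$ is a genuine inverse of $\expm_u$), we have $\lm{u}{z} = v - s\nabla f(u)$. Trivially $\lm{u}{\exm{u}{v}} = v$. Therefore, writing $w := \lm{u}{x_*}$ and $g := \nabla f(u)$ for brevity,
\begin{align*}
\DD{u}{z}{x_*}^2 - \DD{u}{\exm{u}{v}}{x_*}^2
&= \norm{(v - sg) - w}_u^2 - \norm{v - w}_u^2.
\end{align*}

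Next, I would expand the first term using the standard inner-product identity on $T_u M$: $\norm{(v-w) - sg}_u^2 = \norm{v-w}_u^2 - 2s\inp{v-w}{g}_u + s^2\norm{g}_u^2$. Substituting this in and cancelling gives
\begin{align*}
\DD{u}{z}{x_*}^2 - \DD{u}{\exm{u}{v}}{x_*}^2
= s^2\norm{g}_u^2 + 2s\inp{g}{w-v}_u,
\end{align*}
which is precisely the stated identity once one re-substitutes $g = \nabla f(u)$ and $w = \lm{u}{x_*}$.

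There is essentially no obstacle here; the proof is a one-line Euclidean expansion. The only subtlety worth flagging is the well-definedness of the inverse exponential maps, which is handled by the standing uniquely-geodesic assumption from \S\ref{sec:riemnest}, and the implicit requirement that $v - s\nabla f(u)$ lies in the domain of $\expm_u$, which holds whenever $\expm_u$ is defined at the point producing $z$. This mirrors exactly the role played by Proposition~\ref{folk :mirror} in the Euclidean case, and explains \emph{why} the potential~\eqref{r:def:poten} was designed to live on $T_{x_t}M$: doing so preserves the mirror-descent-type identity verbatim, localising all non-Euclidean difficulty into the comparison between projected distances at adjacent tangent spaces (captured by the valid distortion rate of Definition~\ref{def:val}).
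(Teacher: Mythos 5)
Your proof is correct and matches the paper's own argument: both identify $\lm{u}{z}=v-s\nabla f(u)$ via the uniquely-geodesic assumption and then expand the squared norm in the Euclidean inner product on $T_uM$. The extra discussion of well-definedness and the role of projected distances is sound but not part of the paper's (one-line) proof.
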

 {\bf Proof} 
 The proof follows immediately from the definition of the projected distances (Definition~\ref{def:pd}):
 \begin{align*}
     \DD{u}{z}{x_*}^2 &= \norm{\lm{u}{z} -\lm{u}{x_*}}_u^2\\
     &=\norm{v-s\cdot \nabla f(u) -\lm{u}{x_*}}_u^2\\
     &= \norm{v -\lm{u}{x_*}}_u^2 +\norm{-s\cdot \nabla f(u)}_u^2 +2\inp{-s\cdot \nabla f(u)}{v -\lm{u}{x_*}}_u\\
     &= \DD{u}{\exm{u}{v}}{x_*}^2 +s^2 \norm{\nabla f(u)}_u^2 +2\inp{\cdot \nabla f(u)}{\lm{u}{x_*}-v}_u\,. &{\blacksquare}
 \end{align*}	 
 
 Now we prove Theorem~\ref{thm:riem}.
It turns out one can establish an upper bound of the potential difference  $\Psi_{t+1}-\Psi_t$ similar to \eqref{upper:1}.
Here the difference is that instead of $W,X,\nabla$, we now have the following three vectors in the same tangent space $T_xM$:
\begin{align}
    \WW:= \lm{x_{t+1}}{z_t}\,,\quad \XX:= -\lm{x_{t+1}}{x_*}\,,~~ \text{and}~~ \NN:=\grad f(x_{t+1})\,.
\end{align}
As pointed out in \S\ref{sec:poten}, the fact that the three vectors are in the same tangent space (since they are all rooted at $x_{t+1}$) is crucial for the analysis to follow.
With these vectors and Propositions~\ref{r:folk:grad} and \ref{r:folk :mirror}, one can derive the following upper bound similarly to Appendix~\ref{app:derive} (here $\norm{\cdot}$ denotes $\norm{\cdot}_{x_{t+1}}$):
\begin{mdframed}
$\CC_1\cdot \normp{\WW}^2 + \CC_2\cdot \normp{\XX}^2 +\CC_3\normp{\NN}^2  +\CC_4 \cdot \inpp{\WW}{\XX} +\CC_5\cdot \inpp{\WW}{\NN}+\CC_6\cdot \inpp{\XX}{\NN}\,,\eqnum \label{r:upper}$\\
where 
$\begin{cases}
  \CC_1:=\wm_{t+1}^2 B_{t+1}-\frac{B_t}{\pmb{\ddr{t+1}}}  -  \frac{\mu}{2}\frac{\wg_{t+1}^2}{(1-\wg_{t+1})^2}  A_t\,, &    \CC_2:=B_{t+1}-\frac{B_t}{\pmb{\ddr{t+1}}} - \frac{\mu}{2}(A_{t+1}-A_t)\,,\\
    \CC_3:= \sm_{t+1}^2 B_{t+1} - \drop \cdot A_{t+1}\,, &
    \CC_4:=2 \cdot \left(\wm_{t+1} B_{t+1} -\frac{B_t}{\pmb{\ddr{t+1}}}  \right)\,,\\
    \CC_5:= \frac{1-\wg}{\wg} A_t-2\wm_{t+1}\sm_{t+1} B_{t+1}\,,\quad\text{and} &
    \CC_6:=(A_{t+1}-A_t)-2\sm_{t+1} B_{t+1}\,.
    \end{cases}$
\end{mdframed}
Notice the similarity between \eqref{r:upper} and \eqref{upper:1}: the only difference  is that $B_t$'s in \eqref{upper:1} are replaced with $B_t/\ddr{t+1}$'s.
Indeed, this difference is precisely attributed to the definition of valid distortion rate (Definition~\ref{def:val}).
More specifically, in the derivation of \eqref{r:upper}, we use  $-B_t\cdot \DD{x_{t}}{z_t}{x_*}^2 \leq -\frac{B_t}{\ddr{t}} \cdot  \DD{x_{t+1}}{z_t}{x_*}^2$, which precisely accounts for the appearance of $B_t/\ddr{t+1}$ instead of $B_t$:

	We now follow   Section~\ref{sec:ensure} to make \eqref{r:upper} ``$-$SoS.''
First, from $\CC_4=\CC_5=\CC_6=0$, we get:
\begin{align}
    \sm_{t+1}  = \frac{A_{t+1}-A_t}{2B_{t+1}}\,,\quad \wm_{t+1} = \frac{B_t}{\pmb{\ddr{t+1}} B_{t+1}}\,,~\text{and}~\frac{\wg_{t+1}}{1-\alpha_{t+1}} = \frac{ (A_{t+1}-A_t)B_t}{\pmb{\ddr{t+1}}A_t B_{t+1}}\,. 
    \label{r:stepsizes}
\end{align}
Moving on, one can similarly obtain the following from $\CC_3\leq 0$ and $\CC_2\leq 0$:
\begin{align}
    &\frac{(A_{t+1}-A_t)^2}{4\drop \cdot  A_{t+1}} \leq B_{t+1} \quad \text{and}  \label{r:b:from:a}\\
    &\frac{(A_{t+1}-A_t)^2}{4\drop\cdot  A_{t+1}} -  (A_{t+1}-A_t)\frac{\mu}{2} \leq \frac{B_t}{\pmb{\ddr{t+1}}}\,.     \label{r:ineq:a}
\end{align}
Again, using the suboptimality shrinking ratio $ 1-\cc_{t+1}:=A_{t}/A_{t+1}$, 
\eqref{r:ineq:a} becomes
\begin{align}
      \frac{\cc_{t+1}(\cc_{t+1}-2\mu\drop)}{1-\cc_{t+1}} \leq \frac{4\drop}{\pmb{\ddr{t+1}}}\cdot  \frac{B_t}{A_t}\,. \label{r:ineq:a2}
\end{align}
 Again, due to  the LHS being an increasing function on $\cc_{t+1}\in[2\mu\drop,1)$,  the largest $\cc_{t+1}$ (or equivalently, the largest $A_{t+1}$) satisfies \eqref{r:ineq:a2} (or equivalently, \eqref{r:ineq:a})  with equality:
 \begin{align}
      \frac{\cc_{t+1}(\cc_{t+1}-2\mu\drop)}{1-\cc_{t+1}} = \frac{4\drop}{\pmb{\ddr{t+1}}}\cdot  \frac{B_t}{A_t}\,. \label{r:eq:appen}
\end{align}
 Consequently, such choice of $\cc_{t+1}$ (or corresponding $A_{t+1}$) also satisfies \eqref{r:b:from:a} with equality.

Now one can follow the calculations in Appendix~\ref{pf:thm1} to express parameters in terms of $\cc_{t+1}$.
From the equality version of \eqref{r:b:from:a},
one can easily deduce the identical relations as \eqref{eq:b} and \eqref{eq:ratio}.
Now using \eqref{eq:b}, \eqref{eq:ratio}, \eqref{r:stepsizes}, and \eqref{r:eq:appen}, one can express $\wg_{t+1},\wm_{t+1},\sm_{t+1}$ in terms of $\cc_{t+1}$.
It turns out that  the final expressions do not depend on $\ddr{t+1}$ and are \emph{identical} to Appendix~\ref{pf:thm1}:
\begin{align*}
    \sm_{t+1} &\overset{\eqref{r:stepsizes}}{=} \frac{A_{t+1}-A_t}{2B_{t+1}} = \frac{A_{t+1}-A_t}{A_{t+1}}\cdot \frac{A_{t+1}}{2B_{t+1}} \overset{\eqref{r:stepsizes}\&\eqref{eq:ratio}}{=} \cc_{t+1}\cdot \frac{2\drop}{\cc_{t+1}^2} =2\drop\cc_{t+1}^{-1}\,,\\
    \wm_{t+1} &\overset{\eqref{r:stepsizes}}{=} \frac{B_t}{\pmb{\ddr{t+1}}B_{t+1}} \overset{\eqref{eq:b}}{=} \frac{1-\cc_{t+1}}{\cc_{t+1}^2}\cdot \frac{4\drop}{\pmb{\ddr{t+1}}} \cdot \frac{B_t}{A_t}\overset{\eqref{r:eq:appen}}{=} \frac{\cc_{t+1}-2\mu\drop}{\cc_{t+1}} = 1-2\mu\drop\cc_{t+1}^{-1}\,,\quad \text{and}\\
    \frac{\wg_{t+1}}{1-\wg_{t+1}}&\overset{\eqref{r:stepsizes}}{=}\frac{(A_{t+1}-A_t)B_t}{\ddr{t+1}A_t B_{t+1}} = \frac{A_{t+1}-A_t}{A_{t+1}}\cdot \frac{B_t}{\ddr{t+1}A_t}\cdot \frac{A_{t+1}}{B_{t+1}}\\
    &\overset{\eqref{r:eq:appen}\&\eqref{eq:ratio}}{=} \cc_{t+1}\cdot \frac{ \cc_{t+1}(\cc_{t+1}-2\mu\drop) }{4\drop (1-\cc_{t+1})} \cdot \frac{4\drop}{\cc_{t+1}^2}= \frac{\cc_{t+1}- 2\mu\drop }{1-\cc_{t+1}}\,.
\end{align*}

With the above choices of parameters, one can again check that $\wg_{t+1},\wm_{t+1}$ both lie in $[0,1]$ since $\cc_{t+1}\in [2\mu\drop ,1)$. In particular,  $\wm_{t+1}^2 B_{t+1}\leq \wm_{t+1} B_{t+1} =  B_t/\pmb{\ddr{t+1}}$, implying $\CC_1\leq 0$.
Therefore, the above choices of parameters satisfy $\CC_1,\CC_2,\CC_3\leq 0$ and $\CC_4,\CC_5,\CC_6=0$, and consequently, $\Psi_{t+1}\leq \Psi_t$ since $\Psi_{t+1}-\Psi_t\leq \eqref{r:upper}$.
This completes the proof of Theorem~\ref{thm:riem}.

	 \subsection{Proofs of distance shrinking results (Lemma~\ref{lem:shrink:d})}
\label{pf:shrink}

We first show the following result which is a direct consequence of Theorem~\ref{thm:riem}:
	\begin{proposition} 
	\label{prop:shrink}
	Assume that $\mu>0$ and $\sg \in(0,2/L)$.
	Let $\init :=  f(x_0)-f(x_*) + \frac{1}{4\drop} \cc_{0}^2\cdot \dd{x_0}{x_*}^2$.
Then, $x_t$, $y_t$, $z_t$ generated from Algorithm~\ref{alg:1} satisfy the following distance bounds:
	\begin{enumerate}
	    \item 
	   $\DD{x_{t}}{z_{t}}{x_*} \leq \sqrt{\init\prod_{j=1}^{t}(1- \cc_j)} \cdot \sqrt{\frac{1}{\mu^2\drop} }$.
	    \item $\dd{y_{t}}{x_*} \leq \sqrt{\init \prod_{j=1}^{t}(1- \cc_j)}\cdot \sqrt{\frac{2}{\mu}} $.
	   \item $\DD{x_t}{y_{t}}{z_{t}} \leq  \sqrt{ \init \prod_{j=1}^{t}(1- \cc_j)} \cdot \left(\sqrt{\frac{2}{\mu}}+\sqrt{\frac{1}{\mu^2\drop}}\right)$.
	 \end{enumerate}
	\end{proposition}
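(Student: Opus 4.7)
The plan is to iterate Theorem~\ref{thm:riem} to obtain a single master inequality on the potential, then peel off each of the three desired bounds using elementary convexity and geometry. No step is genuinely hard; the routine nature is exactly why the paper flags this as a direct consequence.

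The master inequality comes from the recursive bound in Theorem~\ref{thm:riem}. Since the parameter choice prescribed there forces $B_t/A_t = \cc_t^2/(4\drop)$, the one-step guarantee reads
$$
f(y_{t+1})-f(x_*) + \tfrac{\cc_{t+1}^2}{4\drop}\DD{x_{t+1}}{z_{t+1}}{x_*}^2 \le (1-\cc_{t+1})\Bigl[f(y_t)-f(x_*) + \tfrac{\cc_t^2}{4\drop}\DD{x_t}{z_t}{x_*}^2\Bigr].
$$
Since $x_0=y_0=z_0$, the $t=0$ bracketed quantity equals exactly $\init$, so telescoping yields
$$
f(y_t)-f(x_*) + \tfrac{\cc_t^2}{4\drop}\DD{x_t}{z_t}{x_*}^2 \;\le\; \init\prod_{j=1}^t(1-\cc_j).
$$
Both summands on the left are non-negative, so each is individually bounded by the right-hand side.

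Bounds (1) and (2) then follow immediately. For (1), I would isolate the projected-distance term, divide by $\cc_t^2/(4\drop)$, and use $\cc_t\ge 2\mu\drop$ (guaranteed by Theorem~\ref{thm:riem}) to simplify $4\drop/\cc_t^2 \le 1/(\mu^2\drop)$; taking square roots yields (1). For (2), I would invoke $\mu$-strong geodesic convexity at the optimum, $f(y_t)-f(x_*)\ge \tfrac{\mu}{2}\dd{y_t}{x_*}^2$, and rearrange. For (3), I would work inside the common tangent space $\T_{x_t}M$, where $\lm{x_t}{y_t}$, $\lm{x_t}{z_t}$, $\lm{x_t}{x_*}$ all live: the Euclidean triangle inequality gives $\DD{x_t}{y_t}{z_t}\le \DD{x_t}{y_t}{x_*}+\DD{x_t}{z_t}{x_*}$, and on a Hadamard manifold $\expm^{-1}$ is $1$-Lipschitz (a consequence of Toponogov's comparison theorem already invoked in \S\ref{sec:valid}), hence $\DD{x_t}{y_t}{x_*}\le \dd{y_t}{x_*}$. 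Plugging (1) and (2) into this bound yields (3).

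The one subtlety worth flagging is that (1) measures a projected distance based at $x_t$ while (2) measures an intrinsic geodesic distance, and these must be combined to produce (3). This is precisely the incompatibility-of-metrics issue highlighted in \S\ref{sec:riempoten}, and on Hadamard manifolds it is resolved cleanly by non-expansiveness of $\expm^{-1}$; in the non-Hadamard case of Appendix~\ref{sec:nonhada}, a curvature-dependent correction factor would enter at exactly this step.
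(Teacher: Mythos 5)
Your proof is correct and follows essentially the same route as the paper's: telescope Theorem~\ref{thm:riem} to get the master potential bound, then isolate the two nonnegative summands for (1) and (2) (via $\cc_t\ge 2\mu\drop$ and $\mu$-strong g-convexity respectively), and combine them through the triangle inequality in $T_{x_t}M$ plus non-expansiveness of $\expm_{x_t}^{-1}$ on a Hadamard manifold for (3). The subtlety you flag about combining a projected distance with an intrinsic one is exactly the point the paper resolves the same way.
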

	\begin{proof}
	    By recursively applying Theorem~\ref{thm:riem}, we have the following for any $t\geq 1$:
	  \begin{align*}
	      f(y_{t})-f(x_*) + \frac{1}{4\drop} \cc_{t}^2\cdot \DD{x_{t}}{z_{t}}{x_*}^2&\leq \prod_{j=1}^{t} (1-\cc_{j}) \cdot \left[ f(y_0)-f(x_*) + \frac{1}{4\drop} \cc_{0}^2\cdot \DD{x_0}{z_0}{x_*}^2 \right]\\
	     &=\prod_{j=1}^{t} (1-\cc_{j}) \cdot \init \,,
	  \end{align*}   
	  where the equality follows since $x_0=y_0=z_0$ (which implies $\DD{x_0}{z_0}{x_*} = \dd{x_0}{x_*}$).

	   Hence, the bound on $\DD{x_t}{z_t}{x_*}$ follows immediately due to $\xi_t \in [2\mu\drop,1)$.
	     As for the bound on $\dd{y_t}{x_*}$, 
	   it follows from the $\mu$-strong g-convexity of $f$ (Definition~\ref{def:gconvex}), which implies $\frac{\mu}{2}\cdot \dd{y_{t}}{x_*}^2 \leq f(y_{t})-f(x_*)$.
	    Lastly, the bound on $\DD{x_t}{y_t}{z_t}$ follows from 
	  $$\DD{x_t}{y_{t}}{z_{t}} \leq \DD{x_t}{y_{t}}{x_*}+\DD{x_t}{z_{t}}{x_*}\leq  \dd{y_{t}}{x_*}+\DD{x_t}{z_{t}}{x_*}\, $$
	  which is a consequence of the (Euclidean) triangle inequality together with the fact that the projected distances are shorter than the actual distances (a property of Hadamard manifolds; see  e.g. \citep[Section 6.5]{burago2001course}).\end{proof}
From Proposition~\ref{prop:shrink}, we established that the  projected distance $\DD{x_t}{y_t}{z_t}$ is shrinking over iterations.
Now one can also demonstrate that $\dd{y_t}{z_t}$ is shrinking under mild conditions, and that is what the next proposition is about.
\begin{proposition} \label{dyz}
  Let $\init :=  f(x_0)-f(x_*) + \frac{1}{4\drop} \cc_{0}^2\cdot \dd{x_0}{x_*}^2$. If $\sg L >1$, $\sg L \leq 2-\cc_{t+1}$ and $\cc_{t+1}>2\mu\drop$ hold for $t\geq 0$, then Algorithm~\ref{alg:1} satisfies:
  $$\dd{y_t}{z_t} \leq \frac{\sqrt{\init\prod_{j=1}^{t}(1- \cc_j)}}{1-2\mu\drop \cc_{t+1}^{-1}}  \cdot \frac{ \left(\sqrt{\frac{2}{\mu}}+\sqrt{\frac{1}{\mu^2\drop}} + \frac{L}{\mu} \sqrt{\frac{2}{\mu}}  \right)(1-2\mu\drop)}{(\sg L -1) (\sg L-1+2\mu\drop)}\,. $$ 
\end{proposition}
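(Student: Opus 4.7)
The proposition bounds $d(y_t, z_t)$ by a shrinking quantity under the overshoot condition $\sigma L > 1$, so the plan is to reduce the task to quantities already controlled by Proposition~\ref{prop:shrink} via a chain of elementary triangle inequalities, with the $(\sigma L-1)$ factor emerging from the geometry of the gradient step.

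First I would exploit the fact that $x_{t+1}$ lies on the geodesic from $y_t$ to $z_t$ by the update \eqref{r:nest:0}, so $(1-\alpha_{t+1})\,d(y_t,z_t) = d(x_{t+1},z_t)$, which reduces the task to bounding $d(x_{t+1},z_t)$. To do this I would invert the $z$-update at step $t+1$: the identity $\beta_{t+1}\exp_{x_{t+1}}^{-1}(z_t) = \exp_{x_{t+1}}^{-1}(z_{t+1}) + \eta_{t+1}\nabla f(x_{t+1})$ combined with the Euclidean triangle inequality in $T_{x_{t+1}}M$ gives
\[
\beta_{t+1}\, d(x_{t+1},z_t) \;\leq\; d(x_{t+1},z_{t+1}) + \eta_{t+1}\|\nabla f(x_{t+1})\|,
\]
which produces the $1/\beta_{t+1} = 1/(1-2\mu\drop\cc_{t+1}^{-1})$ prefactor in the target bound. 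In the same tangent space, $d(x_{t+1},z_{t+1}) \leq \DD{x_{t+1}}{z_{t+1}}{x_*} + d(x_{t+1},x_*)$, with $\DD{x_{t+1}}{z_{t+1}}{x_*}$ controlled by Proposition~\ref{prop:shrink}(1) at step $t+1$, accounting for the $\sqrt{1/(\mu^2\drop)}$ term.

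The crucial remaining step is to bound $\|\nabla f(x_{t+1})\|$ and $d(x_{t+1},x_*)$ in terms of $d(y_{t+1},x_*)$. Since $y_{t+1} = \exm{x_{t+1}}{-\sigma\nabla f(x_{t+1})}$ and $\sigma L > 1$ makes the gradient step overshoot, the reverse triangle inequality yields
\[
d(y_{t+1},x_*) \;\geq\; \bigl|\sigma\|\nabla f(x_{t+1})\| - d(x_{t+1},x_*)\bigr|,
\]
which, combined with the convexity--smoothness envelope $\mu\, d(x_{t+1},x_*) \leq \|\nabla f(x_{t+1})\| \leq L\,d(x_{t+1},x_*)$ and a careful case analysis, should yield a bound of the form $\|\nabla f(x_{t+1})\| \leq \frac{L(1-2\mu\drop)}{(\sigma L-1)(\sigma L-1+2\mu\drop)}\,d(y_{t+1},x_*)$; strong convexity then converts this into $d(x_{t+1},x_*) \leq \|\nabla f(x_{t+1})\|/\mu$, producing the extra $L/\mu$ factor in the constant. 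Finally, Proposition~\ref{prop:shrink}(2) applied at step $t+1$ bounds $d(y_{t+1},x_*)$ by $\sqrt{2/\mu}\,[\init\prod_{j=1}^{t+1}(1-\cc_j)]^{1/2}$, and monotonicity $\prod_{j=1}^{t+1}(1-\cc_j) \leq \prod_{j=1}^t(1-\cc_j)$ lets us absorb the extra index.

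The main obstacle is the overshoot step: $\sigma L>1$ alone only guarantees $\sigma\|\nabla f(x_{t+1})\| \geq d(x_{t+1},x_*)$ in the worst-case alignment of $\nabla f(x_{t+1})$ with $\exp_{x_{t+1}}^{-1}(x_*)$, and not in all cases, so one cannot simply use $d(y_{t+1},x_*) \geq (\sigma\mu-1)d(x_{t+1},x_*)/\mu$ directly. This is where the companion condition $\sigma L \leq 2-\cc_{t+1}$ (equivalently $\eta_{t+1}\geq \sigma$) enters, enabling the sharp algebraic factor $(\sigma L-1)(\sigma L-1+2\mu\drop)$ in the denominator after combining with the strong-convexity identity for $D_{x_{t+1}}(y_{t+1},x_*)^2$ and simplifying. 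Assembling all these estimates then yields the stated bound.
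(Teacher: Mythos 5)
Your proposal diverges from the paper precisely at the step that carries the proof, and that step has a genuine gap.

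The paper's argument is \emph{self-referential}: it expands $\DD{x_{t+1}}{y_{t+1}}{z_{t+1}} = \normp{(\sm_{t+1}-\sg)\nabla f(x_{t+1}) - \wm_{t+1}\lm{x_{t+1}}{z_t}}$ (note the partial cancellation yielding $|\sm_{t+1}-\sg|$ rather than $\sm_{t+1}$), then bounds $\normp{\nabla f(x_{t+1})}\le L\dd{x_{t+1}}{x_*}\le L\wg_{t+1}\dd{y_t}{z_t}+L\dd{y_t}{x_*}$, so that a copy of $\dd{y_t}{z_t}$ reappears on the right and is moved back to the left. The resulting coefficient $\mathcal K = \wm_{t+1}(1-\wg_{t+1})-L\wg_{t+1}|\sm_{t+1}-\sg|$ is where $\sg L>1$ and $\sg L\le 2-\cc_{t+1}$ are used, and its positivity is precisely what produces the $(\sg L-1)(\sg L-1+2\mu\drop)$ denominator. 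You instead try to avoid the recursion by bounding $\normp{\nabla f(x_{t+1})}$ directly against $\dd{y_{t+1}}{x_*}$ via an overshoot argument. That step does not go through: the reverse triangle inequality $\dd{y_{t+1}}{x_*}\ge \bigl|\sg\normp{\nabla f(x_{t+1})}-\dd{x_{t+1}}{x_*}\bigr|$ only yields a lower bound on $\normp{\nabla f(x_{t+1})}$ if you already know $\sg\normp{\nabla f(x_{t+1})}>\dd{x_{t+1}}{x_*}$, and $\sg L>1$ alone (even with $\sg L\le 2-\cc_{t+1}$) does not give this, since $\sg\mu$ can be arbitrarily small. Worse, the conditions do not exclude the scenario where $\nabla f(x_{t+1})$ is nearly aligned with $\lm{x_{t+1}}{x_*}$ and $\sg\normp{\nabla f(x_{t+1})}\approx \dd{x_{t+1}}{x_*}$, in which case $\dd{y_{t+1}}{x_*}$ is nearly zero while $\normp{\nabla f(x_{t+1})}$ is not, so a uniform bound $\normp{\nabla f(x_{t+1})}\le C\,\dd{y_{t+1}}{x_*}$ is simply false; no ``careful case analysis'' can produce the constant you posit.

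Two smaller mismatches also signal the structural problem: your factor of $1/(1-\wg_{t+1})=(1-2\mu\drop)/(1-\cc_{t+1})$ introduces a $1/(1-\cc_{t+1})$ that does not appear in the stated bound, and by inverting the $z$-update on $\dd{x_{t+1}}{z_{t+1}}$ you end up with a $\sm_{t+1}$ prefactor on the gradient norm, which is larger than the paper's $|\sm_{t+1}-\sg|$ and loses the cancellation the paper exploits. The fix is to adopt the paper's bootstrapping: expand $\DD{x_{t+1}}{y_{t+1}}{z_{t+1}}$ (so the $\sg$ and $\sm_{t+1}$ cancel partially), push $\normp{\nabla f(x_{t+1})}$ through $L\dd{x_{t+1}}{x_*}$ and the triangle inequality back into $\dd{y_t}{z_t}$, and argue positivity of $\mathcal K$.
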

\begin{remark} 
A careful reader might realize  that the appearance of the term  $1-2\mu\drop \cc_{t+1}^{-1}$  in the denominator of the bound could be potentially problematic as this could be arbitrarily small in general when $\cc_{t+1}$ is very close to $2\mu\drop$.
However, the appearance of this term will not be problematic for our purpose since the term will be canceled out with the algorithm parameter $\wm_{t+1} = 1-2\mu\drop \cc_{t+1}^{-1}$  (see Algorithm~\ref{alg:1}) when we use Proposition~\ref{dyz} to bound $\dd{x_t}{z_t}$.
\end{remark}
\begin{proof} 
First, from \eqref{r:nest:1} and \eqref{r:nest:2} together with the triangle inequality,
	\begin{align*}
	    \DD{x_{t+1}}{y_{t+1}}{z_{t+1}} &=\norm{-\sg \nabla f(x_{t
	    +1})-\wm_{t+1} \lm{x_{t+1}}{z_t} + \sm_{t+1}\nabla f(x_{t+1}) }_{x_{t+1}}\\
	  &\geq \wm_{t+1}\norm{\lm{x_{t+1}}{z_t}}_{x_{t+1}}-|\sm_{t+1}-\sg |\cdot \norm{\nabla f(x_{t+1}) }_{x_{t+1}}\\
    &=\wm_{t+1}\cdot \dd{x_{t+1}}{z_t}-|\sm_{t+1}-\sg |\cdot \norm{\nabla f(x_{t+1}) }\,.
	\end{align*}
	From \eqref{r:nest:0}, we have $\dd{x_{t+1}}{z_t} = (1-\wg_{t+1})\cdot \dd{y_t}{z_t}$. Hence, the above inequality becomes
	\begin{align*}
	    \wm_{t+1}(1-\wg_{t+1})\cdot \dd{y_t}{z_t} &\leq \DD{x_{t+1}}{y_{t+1}}{z_{t+1}} +|\sm_{t+1}-\sg|\cdot \norm{\nabla f(x_{t+1})} \\
	    &\leq \DD{x_{t+1}}{y_{t+1}}{z_{t+1}} +L|\sm_{t+1}-\sg|\cdot \dd{x_{t+1}}{x_*}\,,
	\end{align*}
	where the last inequality follows from the geodesically $L$-smoothness of $f$, which implies
	$\norm{\nabla f(x_{t+1})} \leq L\cdot \dd{x_{t+1}}{x_*}$.
Due to the Riemannian triangle inequality, we have $\dd{x_{t+1}}{x_*}\leq \dd{x_{t+1}}{y_t} +\dd{y_t}{x_*}$, and hence the upper bound becomes
	\begin{align*}
	& \DD{x_{t+1}}{y_{t+1}}{z_{t+1}} +L|\sm_{t+1}-\sg|\cdot  \dd{x_{t+1}}{y_t}+L|\sm_{t+1}-\sg|\cdot \dd{y_t}{x_*}, \\
	=&\DD{x_{t+1}}{y_{t+1}}{z_{t+1}} +L\wg_{t+1}|\sm_{t+1}-\sg|\cdot  \dd{y_t}{z_t}+L|\sm_{t+1}-\sg|\cdot \dd{y_t}{x_*}\,,
	\end{align*}
where the second line follows from the identity $\dd{x_{t+1}}{y_t} =\wg_{t+1} \cdot \dd{y_t}{z_t}$ ($\because$ \eqref{r:nest:0}).	
	Moving   $L\wg_{t+1}|\sm_{t+1}-\sg| \cdot  \dd{y_t}{z_t}$ term to the LHS, we then obtain:
	\begin{align} \label{bd:int}
	    \mathcal{K}\cdot \dd{y_t}{z_t} &\leq \DD{x_{t+1}}{y_{t+1}}{z_{t+1}} +L|\sm_{t+1}-\sg|\cdot \dd{y_t}{x_*}\,,
	\end{align}
where $\mathcal{K}:= \wm_{t+1}(1-\wg_{t+1}) - L\wg_{t+1} \left| \sm_{t+1}-\sg\right|$.

Having established \eqref{bd:int}, it is straightforward to see that Proposition~\ref{dyz} is a direct consequence the following two statements:
\begin{enumerate}
    \item The RHS is upper bounded by $\sqrt{\init\prod_{j=1}^{t}(1- \cc_j)} \cdot \left(\sqrt{\frac{2}{\mu}}+\sqrt{\frac{1}{\mu^2\drop}} + \frac{L}{\mu} \sqrt{\frac{2}{\mu}}  \right)$.
    \item $\mathcal{K}\geq  \frac{1-2\mu\drop \cc_{t+1}^{-1}}{1-2\mu\drop}\cdot  (\sg L -1) (\sg L-1+2\mu\drop)$, where the lower bound is always positive since $\sg L >1$ together with the fact that $1-2\mu\drop\cc_{t+1}^{-1} > 1-2\mu\drop \cdot (2\mu\drop)^{-1}=0$.
\end{enumerate}
Hence, the proof is completed as soon as we prove the above two statements.

We first prove the first statement. From Proposition~\ref{prop:shrink}, we have
\begin{itemize}
    \item $\DD{x_{t+1}}{y_{t+1}}{z_{t+1}} \leq  \sqrt{ \init \prod_{j=1}^{t+1}(1- \cc_j)} \cdot \left(\sqrt{\frac{2}{\mu}}+\sqrt{\frac{1}{\mu^2\drop}}\right)$
    \item $L|\sm_{t+1}-\sg|\cdot \dd{y_{t}}{x_*} \leq L|\sm_{t+1}-\sg|\cdot \sqrt{\init \prod_{j=1}^{t}(1- \cc_j)}\cdot \sqrt{\frac{2}{\mu}} $.
\end{itemize}
Moreover, since $\sm_{t+1} = 2\drop \cc_{t+1}^{-1} = \sg (2-\sg L)\cc_{t+1}^{-1}$, it is straightforward from  the assumption $\sg L \leq 2-\cc_{t+1}$ that $\sm_{t+1} \geq \sg $.
Hence, using inequalities (i) $1-\cc_{t+1}\leq 1$ and (ii) $L|\sm_{t+1}-\sg|\leq L\sm_{t+1}= 2L\drop \cc_{t+1}^{-1}  \leq \frac{L}{\mu}$, we obtain the first statement.

Now, let us prove the second statement. 
From the parameter choices in Algorithm~\ref{alg:1} together with the fact $\sm_{t+1}\geq \sg$ we have established above, one can simplify and lower bound $\mathcal{K}$ as follows:
	    \begin{align*}
	  \mathcal{K} &=(1-2\mu\drop \cc_{t+1}^{-1})\frac{1-\cc_{t+1}}{1-2\mu\drop} - L  \frac{\cc_{t+1}-2\mu\drop }{1-2\mu\drop}\left(2\drop\cc_{t+1}^{-1}-\sg\right) \\
	        &=  \frac{1-2\mu\drop \cc_{t+1}^{-1}}{1-2\mu\drop}\cdot \left[ 1-\cc_{t+1} - 2L\drop  +\sg L \cc_{t+1} \right]\\
 &=\frac{1-2\mu\drop \cc_{t+1}^{-1}}{1-2\mu\drop}\cdot \left[ (\sg L -1)^2 +(\sg L-1)\cc_{t+1} \right]\\
 &>\frac{1-2\mu\drop \cc_{t+1}^{-1}}{1-2\mu\drop}\cdot \left[ (\sg L -1)^2 +(\sg L-1)\cdot 2\mu\drop \right]\,.   \end{align*}
 where the last line follows from the fact $\cc_{t+1}>2\mu\drop$ and $\sg L -1 >0$.
 \end{proof}
Now we are ready to provide the formal statement and the proof of Lemma~\ref{lem:shrink:d}:
  \begin{lembox}[Formal statement of Lemma~\ref{lem:shrink:d}] \label{lem:shrink:f}
	   Assume that $\mu>0$.
	   Let   $\init :=  f(x_0)-f(x_*) + \frac{1}{4\drop} \cc_{0}^2\cdot \dd{x_0}{x_*}^2$.
	  If $\sg L >1$, $\sg L \leq 2-\cc_{t+1}$ and $\cc_{t+1}>2\mu\drop$  hold, then  Algorithm~\ref{alg:1} satisfies:
	   $$\dd{x_{t+1}}{z_{t+1}} \leq   \co \cdot  \sqrt{\init\prod_{j=1}^{t}(1- \cc_j)}\,,$$
	   where  $\co = \frac{ \left(\sqrt{\frac{2}{\mu}}+\sqrt{\frac{1}{\mu^2\drop}} + \frac{L}{\mu} \sqrt{\frac{2}{\mu}}  \right)(2L\drop+ 1-2\mu\drop)}{(\sg L -1) (\sg L-1+2\mu\drop)} + \frac{L}{\mu}\sqrt{\frac{2}{\mu}}$.
	     \end{lembox}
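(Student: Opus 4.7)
\textbf{Proof proposal for Lemma~\ref{lem:shrink:f}.} The plan is to estimate $\dd{x_{t+1}}{z_{t+1}}$ directly from the defining update \eqref{r:nest:2} and then reduce everything to quantities already controlled by Propositions~\ref{prop:shrink} and~\ref{dyz}, namely $\dd{y_t}{z_t}$ and $\dd{y_t}{x_*}$. The key combinatorial observation will be that the parameter $\wm_{t+1}=1-2\mu\drop\cc_{t+1}^{-1}$ produced by Algorithm~\ref{alg:1} is exactly what is needed to cancel the potentially problematic denominator $1-2\mu\drop\cc_{t+1}^{-1}$ appearing in Proposition~\ref{dyz}.

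First, I would use \eqref{r:nest:2} together with the definition of the Riemannian exponential map to get
\begin{equation*}
\dd{x_{t+1}}{z_{t+1}}=\normp{\wm_{t+1}\lm{x_{t+1}}{z_t}-\sm_{t+1}\nabla f(x_{t+1})}_{x_{t+1}}\leq \wm_{t+1}\dd{x_{t+1}}{z_t}+\sm_{t+1}\normp{\nabla f(x_{t+1})}\,.
\end{equation*}
Next, \eqref{r:nest:0} gives the identity $\dd{x_{t+1}}{z_t}=(1-\wg_{t+1})\dd{y_t}{z_t}$, and geodesic $L$-smoothness gives $\norm{\nabla f(x_{t+1})}\leq L\dd{x_{t+1}}{x_*}$, which by the Riemannian triangle inequality and \eqref{r:nest:0} is at most $L\wg_{t+1}\dd{y_t}{z_t}+L\dd{y_t}{x_*}$. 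Collecting terms yields
\begin{equation*}
\dd{x_{t+1}}{z_{t+1}}\leq \bigl[\wm_{t+1}(1-\wg_{t+1})+\sm_{t+1}L\wg_{t+1}\bigr]\dd{y_t}{z_t}+\sm_{t+1}L\dd{y_t}{x_*}\,.
\end{equation*}

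The crux is to rewrite the bracketed coefficient so that it has an explicit factor of $\wm_{t+1}$. Using the Algorithm~\ref{alg:1} definitions $\sm_{t+1}=2\drop\cc_{t+1}^{-1}$ and $\wg_{t+1}=(\cc_{t+1}-2\mu\drop)/(1-2\mu\drop)$, a direct calculation gives the identity $\sm_{t+1}L\wg_{t+1}=\tfrac{2L\drop}{1-2\mu\drop}\wm_{t+1}$, and since $1-\wg_{t+1}=(1-\cc_{t+1})/(1-2\mu\drop)$, the bracketed coefficient equals
\begin{equation*}
\wm_{t+1}\cdot\frac{(1-\cc_{t+1})+2L\drop}{1-2\mu\drop}\,\leq\,\wm_{t+1}\cdot\frac{(1-2\mu\drop)+2L\drop}{1-2\mu\drop}\,,
\end{equation*}
where the inequality uses $\cc_{t+1}>2\mu\drop$. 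Now I would invoke Proposition~\ref{dyz} for $\dd{y_t}{z_t}$: the factor $\wm_{t+1}=1-2\mu\drop\cc_{t+1}^{-1}$ in front exactly cancels the denominator $1-2\mu\drop\cc_{t+1}^{-1}$ in Proposition~\ref{dyz}'s bound, leaving the clean constant $(2L\drop+1-2\mu\drop)$ times Proposition~\ref{dyz}'s numerical factor, which is precisely the first summand of $\co$. Separately, Proposition~\ref{prop:shrink} bounds $\dd{y_t}{x_*}\leq\sqrt{\init\prod_{j=1}^{t}(1-\cc_j)}\cdot\sqrt{2/\mu}$, and the estimate $\sm_{t+1}L=2L\drop\cc_{t+1}^{-1}\leq L/\mu$ (from $\cc_{t+1}>2\mu\drop$) produces the second summand $(L/\mu)\sqrt{2/\mu}$ of $\co$. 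Summing the two contributions yields the claimed inequality.

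The only delicate point is the cancellation in the previous paragraph: if one naively bounded the bracketed coefficient by $1+L/\mu$ without extracting the $\wm_{t+1}$ factor, Proposition~\ref{dyz}'s $1/(1-2\mu\drop\cc_{t+1}^{-1})$ factor would remain and the bound would blow up as $\cc_{t+1}\downarrow 2\mu\drop$. So the main ``obstacle'' is really just the algebraic identification $\sm_{t+1}L\wg_{t+1}=\tfrac{2L\drop}{1-2\mu\drop}\wm_{t+1}$, which is a direct consequence of the Algorithm~\ref{alg:1} choices; everything else is triangle inequalities and the two propositions already proved.
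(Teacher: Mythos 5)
Your proposal is correct and is essentially the paper's own argument. You start from \eqref{r:nest:2} with the tangent-space triangle inequality, invoke geodesic $L$-smoothness and the Riemannian triangle inequality to produce $\dd{y_t}{z_t}$ and $\dd{y_t}{x_*}$, use the \eqref{r:nest:0} identities, and conclude via Propositions~\ref{prop:shrink} and~\ref{dyz}. The algebraic identity $\sm_{t+1}L\wg_{t+1}=\tfrac{2L\drop}{1-2\mu\drop}\wm_{t+1}$ and the resulting cancellation of $1-2\mu\drop\cc_{t+1}^{-1}$ against Proposition~\ref{dyz}'s denominator is exactly the ``$\diamondsuit$'' step in the paper's derivation, which you have merely made explicit (and correctly flagged as the non-obvious point).
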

\begin{proof}
  From \eqref{r:nest:2}, one can use the Euclidean triangle inequality on $\T_{x_{t+1}}M$ to obtain: 
\begin{align*}
\dd{x_{t+1}}{z_{t+1}}& \leq \wm_{t+1} \cdot \dd{x_{t+1}}{z_t} + \sm_{t+1} \cdot \norm{\nabla f(x_{t+1})}_{x_{t+1}}\\
&\overset{(\clubsuit)}{\leq}  \wm_{t+1} \cdot \dd{x_{t+1}}{z_t} + L\sm_{t+1} \cdot  \dd{x_{t+1}}{x_*}\\
&\overset{(\spadesuit)}{\leq} \wm_{t+1} \cdot \dd{x_{t+1}}{z_t} + L\sm_{t+1} \cdot  \dd{x_{t+1}}{y_t}+L\sm_{t+1} \cdot  \dd{y_t}{x_*}\\
&\overset{(\heartsuit)}{=}\left( \wm_{t+1}(1-\wg_{t+1}) + L\sm_{t+1} \wg_{t+1}  \right) \cdot \dd{y_t}{z_t} +L\sm_{t+1} \cdot  \dd{y_t}{x_*}\\
&\overset{(\diamondsuit)}{=}
\frac{2L\drop+ 1-\cc_{t+1}}{1-2\mu\drop}(1-2\mu\drop\cc_{t+1}^{-1})\cdot \dd{y_t}{z_t} +2L\drop \cc_{t+1}^{-1}  \cdot  \dd{y_t}{x_*}\,,
\end{align*}
where ({\footnotesize $\clubsuit$}) is due to the geodesically $L$-smoothness of $f$, which implies   
$\norm{\nabla f(x_{t+1})} \leq L\cdot \dd{x_{t+1}}{x_*}$; ({\footnotesize $\spadesuit$}) is due to Riemannian triangle inequality; ({\footnotesize $\heartsuit$}) is due to \eqref{r:nest:0}; and
  ({\footnotesize $\diamondsuit$}) follows from  from the choice of parameters in Algorithm~\ref{alg:1}.

Now  after we apply Propositions~\ref{prop:shrink} and \ref{dyz} to the last upper bound, and use the fact $\cc_{t+1}\in [2\mu\drop,1)$ to upper bound $\cc_{t+1}$'s in the resulting upper bound, Lemma~\ref{lem:shrink:f} readily follows.
	\end{proof}

	\subsection{Proof of Theorem~\ref{thm:main}}
	\label{pf:main}
	We first  demonstrate that regardless of what initial value $\cc_0\geq 0$ we choose, $\cc_t$ becomes less than $\sqrt{\mu/L}$  after few iterations.
	Before the demonstration, we denote by $\cc_{t+1} = \ctt_{t+1}(\cc_t)$ the relation with which $\{\cc_{t}\}$ is produced in Algorithm~\ref{alg:1}.
	In other words, $\cc_{t+1} = \ctt_{t+1}(\cc_t)$ is equivalent to \eqref{r:recur:xi} with $\ddr{} =\ddr{t+1}$.  
	\begin{proposition} \label{xiconv}
	If $\cc_0 > \sqrt{ \mu/L}$, then $\cc_t\leq \sqrt{\mu/L}$ for all $t$ greater than or equal to 
	\begin{align} \label{num:iter}
	   \frac{ \log\left((\cc_0-\sqrt{2\mu\drop})/(\sqrt{\mu/L}-\sqrt{2\mu\drop})\right)}{\log\left(1/\left(1-\frac{8\mu\drop}{5+\sqrt{5}}\right)\right)}\,.
	\end{align}
	If $\cc_0<\sqrt{\mu/L}$, then $\cc_t\leq \sqrt{\mu/L}$ for all $t\geq0$.
	\end{proposition}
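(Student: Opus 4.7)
The challenge is that $\ddr{t+1}$ varies with $t$, so one cannot invoke Lemma~\ref{lem:appen} directly. The plan is to sandwich $\{\cc_t\}$ from above by an auxiliary sequence $\{\tilde\cc_t\}$ defined by the \emph{constant}-distortion recursion with $\ddr{}\equiv 1$, starting at $\tilde\cc_0=\cc_0$. The reduction hinges on two monotonicity properties already contained in the proof of Lemma~\ref{lem:appen}: (a)~each $\ctt_{t+1}$ is increasing on $[0,1)$ with unique fixed point $\cc(\ddr{t+1})$, and since $\cc(\ddr{})$ is decreasing in $\ddr{}$ (Remark~\ref{rmk:constant}) and $\ddr{t+1}\geq 1$, one has $\cc(\ddr{t+1})\leq \cc(1)=\sqrt{2\mu\drop}$; (b)~for fixed $v$, the map $\ctt_{\ddr{}}(v)=\cho^{-1}(v^2/\ddr{})$ is decreasing in $\ddr{}$, hence $\ctt_{t+1}(v)\leq \tilde\ctt(v)$. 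Combining (a) and (b) in a one-line induction yields $\cc_t\leq \tilde\cc_t$ for all $t$.

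For the easy case $\cc_0<\sqrt{\mu/L}$, no sandwich is needed. First I would note $\sqrt{\mu/L}>\sqrt{2\mu\drop}\geq \cc(\ddr{t+1})$, where the first inequality follows from $\sg L>1$ (so $\sg\neq 1/L$, giving $\drop<1/(2L)$, hence $2\mu\drop<\mu/L\leq \sqrt{\mu/L}$). Since $\ctt_{t+1}$ is increasing and fixes $\cc(\ddr{t+1})$, we get $\ctt_{t+1}(\sqrt{\mu/L})\leq \sqrt{\mu/L}$, and the induction $\cc_{t+1}=\ctt_{t+1}(\cc_t)\leq \ctt_{t+1}(\sqrt{\mu/L})\leq \sqrt{\mu/L}$ closes immediately.

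For the main case $\cc_0>\sqrt{\mu/L}$, I would apply Lemma~\ref{lem:appen}~(3) to the auxiliary sequence with $\ddr{}=1$ and $a=2\mu\drop$, obtaining
\begin{equation*}
\tilde\cc_t-\sqrt{2\mu\drop}\;\leq\;\rr^{t}\bigl(\cc_0-\sqrt{2\mu\drop}\bigr),\qquad \rr=1-\tfrac{8\mu\drop}{5+\sqrt{5}}.
\end{equation*}
Combining with the sandwich $\cc_t\leq \tilde\cc_t$, the condition $\cc_t\leq\sqrt{\mu/L}$ is guaranteed once $\rr^{t}(\cc_0-\sqrt{2\mu\drop})\leq \sqrt{\mu/L}-\sqrt{2\mu\drop}$; taking logarithms and rearranging recovers exactly the threshold~\eqref{num:iter}.

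The main conceptual obstacle is precisely the time-varying distortion rate, which prevents a direct invocation of Lemma~\ref{lem:appen}. The sandwich resolves it cleanly by reducing the problem to the constant-distortion analysis already available, and the remaining work is the routine verification of the two monotonicities in $\ddr{}$ and the elementary inequality $2\mu\drop<\mu/L$.
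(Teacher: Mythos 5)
Your proof is correct, and it takes a genuinely different route from the paper's. The paper works directly with the actual (time-varying) sequence $\{\cc_t\}$: it splits into cases based on whether $\cc_t$ has already dropped below $\sqrt{2\mu\drop}$, and in the case $\cc_t>\sqrt{2\mu\drop}$ it applies the mean value theorem to $\ctt_{t+1}$ together with the derivative bound from Proposition~\ref{prop:der}, comparing $\cc_t$ to the preimage $\ctt_{t+1}^{-1}(\sqrt{2\mu\drop})$ rather than to $\sqrt{2\mu\drop}$ itself; the uniform contraction factor $\rr=1-\tfrac{8\mu\drop}{5+\sqrt5}$ emerges by observing that $\ddr{}\mapsto\tfrac{1}{\sqrt{\ddr{}}}\bigl(1-\tfrac{4}{5+\sqrt5}\cdot\tfrac{2\mu\drop}{\sqrt{\ddr{}}}\bigr)$ is maximized at $\ddr{}=1$. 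Your sandwich argument reduces the time-varying recursion to the constant-distortion recursion at $\ddr{}=1$ via the two monotonicity properties of $\ctt_{\ddr{}}$, and then invokes Lemma~\ref{lem:appen}(3) as a black box. This is more modular: all the delicate derivative estimation is confined to the already-proved Lemma~\ref{lem:appen}, and the induction $\cc_t\leq\tilde\cc_t$ automatically yields persistence of the bound $\cc_t\leq\sqrt{\mu/L}$ for all $t$ past the threshold, whereas the paper handles persistence implicitly via the case split. The trade-off is that you must introduce and track an auxiliary sequence, and must verify the monotonicity of $\ctt_{\ddr{}}$ in $\ddr{}$ (which, though elementary from $\cho^{-1}$ being increasing and $v^2/\ddr{}$ being decreasing in $\ddr{}$, is not spelled out anywhere in the paper). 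Both arrive at the same geometric rate and the same threshold~\eqref{num:iter}.
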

	\begin{proof}
At some iteration $t$, we consider the two cases depending on whether $\cc_{t}\leq \sqrt{2\mu\drop}$ or not:
\begin{enumerate}
    \item First, if $\cc_t\leq \sqrt{2\mu\drop}$, then we have $\cc_{t'}\leq  \sqrt{2\mu\drop}$ for all $t'\geq t$ because $\cc(\ddr{t'})$ is less than $\sqrt{2\mu\drop }$ for all $t'$ and the  relation at each step, $\cc_{t'+1} = \ctt_{t'+1}(\cc_{t'})$, will only bring $\cc_t'$ closer to $\cc(\ddr{t'})$ due to Lemma~\ref{lem:appen}.
    \item Now consider the case  $\cc_{t}>\sqrt{2\mu\drop}$. We may assume that $\cc_{t+1} >\sqrt{2\mu\drop}$ (otherwise, $\cc_{t'}\leq \sqrt{2\mu\drop}$ for $t'\geq t+1$ due to the first case).
    Then, the mean value theorem implies:
    \begin{align*}
	    \cc_{t+1} -\sqrt{2\mu\drop }&=\ctt_{t+1}(\cc_{t}) -\ctt_{t+1}(\ctt_{t+1}^{-1}(\sqrt{2\mu\drop }))\\
	    &\overset{(\clubsuit)}{\leq}    \frac{1}{\sqrt{\ddr{t+1}}} \left(1-\frac{4}{5+\sqrt{5}}\cdot \frac{2\mu\drop}{\sqrt{\ddr{t+1}}} \right)\cdot \left(\cc_t-\ctt_{t+1}^{-1}(\sqrt{2\mu\drop })\right)\\
	    &\overset{(\spadesuit)}{<}\left(1-\frac{4}{5+\sqrt{5}}\cdot2\mu\drop \right)\cdot \left(\cc_t-\sqrt{2\mu\drop}\right)\,, 
	\end{align*}  
	where  ({\footnotesize $\clubsuit$})  is due to Proposition~\ref{prop:der} together with $\cc_{t+1}>\sqrt{2\mu\drop} \Rightarrow \cc_t > \ctt_{t+1}^{-1}(\sqrt{2\mu\drop})$;
	({\footnotesize $\spadesuit$}) follows since  the maximum of $\frac 
	{1}{\sqrt{\ddr{}}}(1-\frac{4}{5+\sqrt{5}}\frac{2\mu\drop}{\sqrt{\ddr{}}})$ is achieved by $\ddr{}=1$ and  $\sqrt{2\mu\drop }<\ctt_{t+1}^{-1}(\sqrt{2\mu\drop }) $ due to  $\sqrt{2\mu\drop} \geq \cc(\ddr{t+1})$ and Lemma~\ref{lem:appen}.
	Hence, the distance between $\cc_t$ and $\sqrt{2\mu\drop}$ shrinks geometrically.
\end{enumerate}
Combining the two cases, we conclude the proof.
	\end{proof}
We now study the rate of convergence of $\{\cc_t\}$.
To that end, we first study the convergence of $\{\cc(\ddr{t})\}$.
For simplicity, we assume that $\cc_0\leq \sqrt{\mu/L}$.
By Proposition~\ref{xiconv}, the arguments below remain true for $\cc_0>\sqrt{\mu/L}$ after we substitute $t\leftarrow t+ \eqref{num:iter}$.
We first characterize the behaviour of $\cc(\ddr{})$ near $\ddr{}=1$:
\begin{proposition}\label{behav:1}
  Let $\cc(\ddr{}):=\frac{1}{2}\left(\sqrt{(\ddr{}-1)^2+8\ddr{}\mu\drop }-(\ddr{}-1)\right)$ for $\ddr{}\geq 1$.
  Then, $0\leq \sqrt{2\mu\drop}- \cc(\ddr{}) \leq \frac{1}{2}(\ddr{}-1)$ for $1\leq \ddr{} \leq 1+3/(1+ (4\mu\drop)^{-1})$.
\end{proposition}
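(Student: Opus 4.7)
My plan is to prove both bounds by direct algebraic manipulation of the closed-form expression $\cc(\ddr{}) = \tfrac{1}{2}\bigl(\sqrt{(\ddr{}-1)^2 + 8\ddr{}\mu\drop} - (\ddr{}-1)\bigr)$. Squaring (which is valid because the relevant sides are nonnegative for $\ddr{} \geq 1$) should reduce each inequality to an elementary polynomial statement in $\ddr{}-1$ and $\mu\drop$. The only scalar fact I will need beyond this is $2\mu\drop \leq 1$, which holds in our setting because $\sg \in (0, 2/L)$ forces $\drop = \sg(1 - L\sg/2) \leq 1/(2L)$, whence $2\mu\drop \leq \mu/L \leq 1$.

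For the lower bound, I will rewrite $\cc(\ddr{}) \leq \sqrt{2\mu\drop}$ in the form $\sqrt{(\ddr{}-1)^2 + 8\ddr{}\mu\drop} \leq (\ddr{}-1) + 2\sqrt{2\mu\drop}$ and square; after cancelling $(\ddr{}-1)^2$ and $8\mu\drop$ this collapses to $8(\ddr{}-1)\mu\drop \leq 4(\ddr{}-1)\sqrt{2\mu\drop}$, which for $\ddr{} > 1$ is equivalent to $\sqrt{2\mu\drop} \leq 1$ and thus follows from the fact noted above. The boundary case $\ddr{} = 1$ gives equality, since $\cc(1) = \sqrt{2\mu\drop}$.

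For the upper bound, I will rewrite $\sqrt{2\mu\drop} - \cc(\ddr{}) \leq \tfrac{1}{2}(\ddr{}-1)$ as $2\sqrt{2\mu\drop} \leq \sqrt{(\ddr{}-1)^2 + 8\ddr{}\mu\drop}$ and again square; this reduces to $(\ddr{}-1)\bigl[(\ddr{}-1) + 8\mu\drop\bigr] \geq 0$, which is manifest for $\ddr{} \geq 1$. Notably, both bounds then hold for \emph{all} $\ddr{} \geq 1$, so the explicit cap $\ddr{} \leq 1 + 3/(1 + (4\mu\drop)^{-1})$ in the statement is not required for the bounds themselves; I suspect it is imposed because subsequent steps in the proof of Theorem~\ref{thm:main} want $\tfrac{1}{2}(\ddr{}-1)$ to be quantitatively small relative to $\sqrt{2\mu\drop}$ in order to feed into the geometric rate $\rr = 1 - 8\mu\drop/(5+\sqrt{5})$. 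No step of the plan appears genuinely hard; the only care required is checking the signs before squaring, which the condition $\ddr{} \geq 1$ automatically supplies.
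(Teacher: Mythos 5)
Your argument is correct, and it takes a genuinely different route from the paper's. The paper sets $d = \ddr{}-1$ and applies the elementary bound $\sqrt{1+r}\geq 1+\tfrac{1}{3}r$ (valid for $0\leq r\leq 3$) to the square root inside $\cc(1+d)$; this gives $\cc(1+d)\geq \sqrt{2\mu\drop}-\bigl(\tfrac{1}{2}-\tfrac{\sqrt{2\mu\drop}}{3}\bigr)d$, which is slightly stronger than the stated upper bound, but the restriction $r\leq 3$ is exactly what forces the cap $\ddr{}\leq 1+3/(1+(4\mu\drop)^{-1})$ and makes the proof conditional. (The paper's proof block only addresses the upper bound explicitly; the lower bound $\cc(\ddr{})\leq \sqrt{2\mu\drop}$ is left to the monotonicity observation in Remark~\ref{rmk:constant}/Appendix~\ref{appen:just}.) Your approach squares directly, reducing the lower bound to $2\mu\drop\leq 1$ (which indeed follows from $\drop\leq 1/(2L)$ and $\mu\leq L$) and the upper bound to $(\ddr{}-1)\bigl[(\ddr{}-1)+8\mu\drop\bigr]\geq 0$, both unconditionally valid on $\ddr{}\geq 1$. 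What the paper's approach buys is a marginally sharper slope $\tfrac{1}{2}-\tfrac{\sqrt{2\mu\drop}}{3}$; what your approach buys is a self-contained proof of both bounds that holds for \emph{all} $\ddr{}\geq 1$, clarifying that the cap is not actually needed for Proposition~\ref{behav:1} itself. Your guess about why the cap appears is on the right track: it is used downstream in Proposition~\ref{behav}, where $\vd(r)$ must be kept in the regime where Propositions~\ref{behav:1} and~\ref{behav:2} both apply, so with your unconditional version the hypothesis check there becomes slightly simpler.
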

\begin{proof}
For simplicity, let us write $\ddr{}=1+d$.
Then, $\cc(1+d) =\frac{1}{2}\left(\sqrt{d^2 + 8\mu\drop (1+d)}-d\right)$.
Using the inequality $\sqrt{1+r}\geq 1+\frac{1}{3}r$ for $0\leq r\leq 3$, we get the following as long as $d+\frac{1}{8\mu\drop}d^2 \leq 3$:
\begin{align*}
    \cc(1+d) &\geq   \sqrt{2\mu\drop}\cdot \left(1+ \frac{1}{3}d + \frac{1}{24\mu\drop}d^2\right) -\frac{1}{2}d\\
    &\geq  \sqrt{2\mu\drop} -\left(\frac{1}{2}-\frac{\sqrt{2\mu\drop}}{3} \right) d\,.
\end{align*}
Now all we need to check is that $d \leq 3/(1+\frac{1}{4\mu\drop})$ implies $d+\frac{1}{8\mu\drop}d^2 \leq 3$.
Indeed, if $d \leq 3/(1+\frac{1}{4\mu\drop})$, then we have $d \leq 3/(1+\frac{1}{4\mu\drop}) \leq 3/ (3/2)=2$, and hence  $d+\frac{1}{8\mu\drop}d^2 = d\left(1+\frac{d}{8\mu\drop}\right) \leq d\left(1+\frac{1}{4\mu\drop}\right)\leq 3$.
\end{proof}
Next, we characterize the behaviour of the function $\vd(r)$ near $r=1$.
\begin{proposition}\label{behav:2}
  $\vd(r) \leq 1+ 2\kappa r^2$ for $0\leq r \leq \frac{1}{2\sqrt{\kappa}}$ .
\end{proposition}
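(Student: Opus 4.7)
The plan is to verify both terms inside the max in the definition \eqref{def:vd} of $\vd(r)$ satisfy the claimed bound, and then conclude by taking the max. Introducing the substitution $u := \sqrt{\kappa}\, r \in [0, 1/2]$, the statement reduces to establishing the following two inequalities:
\begin{align*}
  \text{(i)}\quad & u \coth u - 1 \leq \tfrac{u^2}{2}\,,\\
  \text{(ii)}\quad & \bigl(\sinh(2u)/(2u)\bigr)^2 \leq 1 + 2u^2\,.
\end{align*}

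For (i), I clear denominators to get the equivalent form $u\cosh u \leq (1+u^2/2)\sinh u$, and verify this via the auxiliary function $h(u) := (1+u^2/2)\sinh u - u\cosh u$. One checks $h(0)=0$, and a routine computation yields $h'(u) = (u^2/2)\cosh u \geq 0$, so $h \geq 0$ on $[0, \infty)$. Note that this branch actually gives the desired bound for \emph{every} $u \geq 0$, not just $u \leq 1/2$.

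For (ii), I use the identity $\sinh(2u)^2 = (\cosh(4u)-1)/2$ to turn the target inequality into $\cosh(4u) \leq 1 + 8u^2 + 16u^4$. Setting $w := 4u \in [0, 2]$, it suffices to prove $\cosh w \leq 1 + w^2/2 + w^4/16$ on $[0, 2]$. Using the Taylor series of $\cosh$, this reduces to
\begin{equation*}
  \sum_{n\geq 3} w^{2n}/(2n)! \;\leq\; w^4\bigl(\tfrac{1}{16} - \tfrac{1}{24}\bigr) = \tfrac{w^4}{48}\,.
\end{equation*}
The left-hand side can be bounded by $\tfrac{w^6}{720}\sum_{k\geq 0}(w^2/30)^k = \tfrac{w^6}{720(1-w^2/30)}$, using the fact that $(2k+5)(2k+6) \geq 30$ for $k \geq 0$ to control successive ratios. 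The resulting inequality simplifies to $3w^2/2 \leq 15$, i.e.\ $w^2 \leq 10$, which holds comfortably since $w \leq 2$.

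The main obstacle is inequality (ii): unlike (i), the endpoint restriction $r \leq \tfrac{1}{2\sqrt{\kappa}}$ (equivalently $w \leq 2$) is essential, and the elementary power-series comparison is the cleanest path since the bound $1 + w^2/2 + w^4/16$ is not the truncation of $\cosh w$ but an inflated version whose extra slack $w^4/48$ must dominate the tail beyond degree four. The geometric-series tail estimate above provides exactly this control with room to spare for $w \in [0,2]$.
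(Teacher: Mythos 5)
Your proof is correct and follows the same decomposition as the paper's: the paper also reduces the claim to the two scalar inequalities $u\coth u \leq 1 + u^2/2$ and $(\sinh(2u)/(2u))^2 \leq 1 + 2u^2$ on $u=\sqrt{\kappa}r\in[0,1/2]$ and then asserts these "by Taylor expansion." You supply the verification that the paper leaves implicit — a monotonicity argument via $h(u)=(1+u^2/2)\sinh u - u\cosh u$, $h'(u)=(u^2/2)\cosh u\ge 0$, for the first, and a power-series tail bound for the second — so the substance matches, with your write-up simply more rigorous.
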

\begin{proof}
Using Taylor expansion, one easily easily verify  for $0\leq r \leq \frac{1}{2\sqrt{\kappa}}$ that
\begin{align*}
    \frac{\sqrt{\kappa}r}{\tanh (\sqrt{\kappa}r)}\leq 1+\frac{\kappa}{2} r^2\quad\text{and} \quad \left(\frac{\sinh(2\sqrt{\kappa}r)}{2\sqrt{\kappa}r}\right)^2 \leq 1+ 2\kappa r^2\,.
\end{align*}
Hence, from the definition of $\vd$ (see \eqref{def:vd}), we obtain the desired bound on $\vd$.
\end{proof}
Combining Propositions~\ref{behav:1} and \ref{behav:2}, we obtain the following results:
\begin{proposition} \label{behav}
$\sqrt{2\mu\drop}-\cc(\vd(r) ) \leq \kappa r^2$   for $0\leq r\leq \sqrt{\frac{3}{1+(4\mu\drop)^{-1}}}\cdot  \frac{1}{2\sqrt{\kappa}}$.
\end{proposition}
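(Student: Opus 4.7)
My plan is to chain Propositions~\ref{behav:1} and~\ref{behav:2} via the substitution $\ddr{} = \vd(r)$. First I apply Proposition~\ref{behav:2} to obtain $\vd(r) - 1 \leq 2\kappa r^2$. Squaring the hypothesis $r \leq \sqrt{3/(1+(4\mu\drop)^{-1})}/(2\sqrt{\kappa})$ gives $r^2 \leq \frac{3}{4\kappa(1+(4\mu\drop)^{-1})}$, so
\[ \vd(r) - 1 \;\leq\; 2\kappa r^2 \;\leq\; \frac{3}{2(1+(4\mu\drop)^{-1})} \;\leq\; \frac{3}{1+(4\mu\drop)^{-1}}, \]
placing $\ddr{} := \vd(r)$ exactly inside the applicability range of Proposition~\ref{behav:1}. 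Applying that proposition then yields $\sqrt{2\mu\drop} - \cc(\vd(r)) \leq \tfrac{1}{2}(\vd(r) - 1) \leq \kappa r^2$, which is the desired bound.

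The upper bound on $r$ is engineered so that both preconditions hold simultaneously: the factor $1/(2\sqrt{\kappa})$ (rather than the larger $1/\sqrt{2\kappa}$ that would naively match Proposition~\ref{behav:1}'s threshold alone) provides exactly the slack needed to absorb the constant $2$ in Proposition~\ref{behav:2}'s bound $2\kappa r^2$. The only secondary point to audit is whether the stated range of $r$ sits inside $[0,1/(2\sqrt{\kappa})]$ so that Proposition~\ref{behav:2} is directly applicable; this is automatic whenever $\sqrt{3/(1+(4\mu\drop)^{-1})} \leq 1$, i.e., in the small-$\mu\drop$ regime that is the case of practical interest for ill-conditioned problems. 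Overall the argument is a mechanical two-step concatenation of the two preparatory propositions, and I do not foresee any substantive obstacle.
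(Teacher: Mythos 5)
Your proof follows the same route as the paper's: chain Proposition~\ref{behav:2} (which gives $\vd(r) \leq 1 + 2\kappa r^2$) with Proposition~\ref{behav:1} (once $\vd(r) \leq 1 + 3/(1+(4\mu\drop)^{-1})$ is confirmed). The squaring of the $r$-bound, the deduction $\vd(r) - 1 \leq \tfrac{3}{2(1+(4\mu\drop)^{-1})}$, and the final factor-$\tfrac{1}{2}$ step all match the paper line by line.

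The single step you leave unverified is exactly the one the paper spells out. You correctly observe that Proposition~\ref{behav:2} needs $r \leq 1/(2\sqrt{\kappa})$, that this follows once $\tfrac{3}{1+(4\mu\drop)^{-1}} \leq 1$, and then you defer that inequality to a ``small-$\mu\drop$ regime.'' The paper does not defer: it asserts $\tfrac{3}{1+(4\mu\drop)^{-1}} \leq \tfrac{3}{1+2L/\mu} \leq 1$, invoking $\drop \leq 1/(2L)$ and $\mu < L$, and concludes the stated $r$-range is contained in $[0,1/(2\sqrt{\kappa})]$. You should carry out that verification rather than leave it as a remark, since the proposition is stated without any explicit smallness hypothesis on $\mu\drop$. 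That said, your caution is well founded: from $\drop \leq 1/(2L)$ one gets $(4\mu\drop)^{-1} \geq L/(2\mu)$, not $\geq 2L/\mu$ as the paper's intermediate step implies, and the corrected bound $\tfrac{3}{1+L/(2\mu)} \leq 1$ requires $L \geq 4\mu$ rather than merely $L > \mu$. So the range check, which both you and the paper treat as a formality, in fact encodes a quantitative condition on $\mu\drop$ (equivalently on $L/\mu$) that deserves to be made explicit.
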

\begin{proof}
Note that $\frac{3}{1+(4\mu\drop)^{-1}} \leq \frac{3}{1+2L/\mu} \leq 1$, and hence, $\sqrt{\frac{3}{1+(4\mu\drop)^{-1}}}\cdot  \frac{1}{2\sqrt{\kappa}} \leq \frac{1}{2\sqrt{\kappa}}$.
Thus, one can apply Proposition~\ref{behav:2} for $0\leq r\leq \sqrt{\frac{3}{1+(4\mu\drop)^{-1}}}\cdot  \frac{1}{2\sqrt{\kappa}}$, and obtain $\vd(r) \leq 1+2\kappa r^2$.
Hence, $\vd(r) \leq 1+ \frac{1}{2}\cdot \frac{3}{1+(4\mu\drop)^{-1}}$ within the range.
Hence, by Proposition~\ref{behav:1}, one then obtains
$\sqrt{2\mu\drop}- \cc(\vd(r))  \leq \kappa r^2$ for  $0\leq r\leq \sqrt{\frac{3}{1+(4\mu\drop)^{-1}}}\cdot  \frac{1}{2\sqrt{\kappa}}$. 
\end{proof}

Now let $\ct:= \sqrt{\frac{3}{1+(4\mu\drop)^{-1}}}\cdot  \frac{1}{2\sqrt{\kappa}}$.
Then by Lemma~\ref{lem:shrink:f}, one can deduce that  $\dd{x_{t+1}}{z_{t+1}} \leq \ct$ whenever $t \geq  2 \frac{\log(\co \cdot \sqrt{D_0}/ \ct)}{-\log (1-2\mu\drop)}$.
Therefore,   Proposition~\ref{behav} implies that for $t \geq  2 \frac{\log(\co \cdot \sqrt{D_0}/ \ct)}{-\log (1-2\mu\drop)}$, the following bound holds:
\begin{align*}
    \sqrt{2\mu\drop}- \cc\big(\vd\big(\dd{x_{t+1}}{z_{t+1}}\big)\big) \leq \kappa \co^2 D_0 (1-2\mu\drop)^{t}\,.
\end{align*}
From this, it follows that $\cc\big(\vd\big(\dd{x_{t+1}}{z_{t+1}}\big)\big)  \in [\sqrt{2\mu\drop}-\eps/2, \sqrt{2\mu\drop}]$ whenever 
\begin{align*}
    t \geq \max \left\{  2 \frac{\log(\co \cdot \sqrt{D_0}/ \ct)}{-\log (1-2\mu\drop)},~ \frac{\log(2\kappa \co^2D_0/\eps)}{-\log (1-2\mu\drop)} \right\}\,.
\end{align*}

Now having established   the convergence of $\{\cc(\ddr{t})\}$, we translate it into the convergence of $\{\cc_t\}$.
Similarly to the proof of Proposition~\ref{xiconv}, one can prove that for any $T\geq 0$,
\begin{align*}
    |\cc_{T+t} -\cc(\ddr{T})| \leq \left(1-\frac{8\mu\drop }{5+\sqrt{5}}\right)^t 
    |\cc_{T}-\cc(\ddr{T})|\,.
\end{align*}
From this, one can conclude that $\cc_{t+1} \in [\sqrt{2\mu\drop}-\eps, \sqrt{2\mu\drop}]$ whenever
\begin{align*}
    t \geq \max \left\{  2 \frac{\log(\co \cdot \sqrt{D_0}/ \ct)}{-\log (1-2\mu\drop)},~ \frac{\log(2\kappa \co^2D_0/\eps)}{-\log (1-2\mu\drop)} \right\} +\frac{\log (2\sqrt{2\mu\drop}/\eps)}{-\log\left(1-\frac{8\mu\drop }{5+\sqrt{5}} \right)}\,,
\end{align*}
concluding the proof of the the convergence rate of $\{\cc_t\} $ in Theorem~\ref{thm:main}.

	\subsection{Justification of Remark~\ref{rmk:constant}} \label{appen:just}
	In this section, we verify that $\cc(\ddr{})$ is decreasing.
Note that for $\ddr{}\geq 1$ we have
		\begin{align*}
		\frac{d}{d\ddr{}}\cc(\ddr{})=\frac{2(\ddr{}-1)+8\mu\drop}{4\sqrt{(\ddr{}-1)^2+8\mu\drop\ddr{} }}-\frac{1}{2} =  \frac{2(\ddr{}-1)+8\mu\drop  -2\sqrt{(\ddr{}-1)^2+8\mu\drop\ddr{} }}{4\sqrt{(\ddr{}-1)^2+8\mu\drop\ddr{}}}< 0\,,
		\end{align*}
		where the last inequality is due to the fact that $\left((\ddr{}-1)+4\mu\drop \right)^2 = (\ddr{}-1)^2 +8\mu\drop (\delta-1) + 16\mu^2\drop^2 <(\ddr{}-1)^2 +8\mu\drop\delta-8\mu\drop(1-2\mu\drop) < (\ddr{}-1)^2 +8\mu\drop\delta$ since $2\mu\drop <1$.

\section{Extension to the non-Hadamard case}
\label{sec:nonhada}

Let us now assume that  the sectional curvatures of $M$ is upper  bounded by $\upp\geq 0$.
In particular, $\upp=0$ corresponds to the Hadamard case.
We first pinpoint the main differences:
Unlike the Hadamard case, $M$ now may not be uniquely geodesic.
Instead, one can only guarantee the property within a local neighborhood of $M$.
Consequently, the notion of convexity can be only guaranteed within a local neighborhood of $M$.
For instance, manifolds with positive sectional curvatures (e.g. spheres) are compact, and hence, they do not admit globally geodesically convex functions other than the constant function.  
Following the prior arts~\citep{dyer2015riemannian,zhang2018estimate}, we make the following assumptions to avoid any further complications:
\begin{assump}
The domain $N\subset M$ of $f$ is uniquely geodesic with the diameter bounded by $\frac{\pi}{2\sqrt{\upp}}$. 
\end{assump}
\begin{assump}
All the iterates of \eqref{r:nesterov} (whose parameters to be chosen later) remain in $N$.
\end{assump}

The analysis for the non-Hadamard case is identical to that for the Hadamard case, modulo one additional geometric inequality due to \citep{zhang2018estimate}:
\begin{proposition}[{\citep[Lemma 7]{zhang2018estimate}}] \label{prop:zhang}
	Let $x,y,z$ be points on Riemannian manifold $M$  with  sectional curvatures  upper bounded by  $\upp\geq0$. If $d(x,z) \leq \frac{\pi}{2\upp}$, then 
	\begin{align*}
	    \DD{x}{y}{z}^2 \leq (1+2\cdot \dd{x}{y}^2)\cdot \dd{y}{z}^2\,.
	\end{align*}
\end{proposition}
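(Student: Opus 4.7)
My strategy is to compare the triangle $\triangle xyz$ with a triangle of matching side lengths in the model space $M_\upp$ of constant sectional curvature $\upp$, and to reduce the desired inequality to a one-variable trigonometric estimate. Set $a:=\dd{y}{z}$, $b:=\dd{x}{z}$, $c:=\dd{x}{y}$, and let $A$ be the interior angle of the geodesic triangle $\triangle xyz$ at vertex $x$. Applying the Euclidean law of cosines inside the tangent space $T_xM$, where $\lm{x}{y}$ and $\lm{x}{z}$ have norms $c$ and $b$ and subtend angle $A$, yields the exact identity
\[
\DD{x}{y}{z}^2 \;=\; b^2 + c^2 - 2bc\cos A,
\]
so the task reduces to bounding $\cos A$ from below by something involving only the three side lengths.

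Next I would invoke the CAT($\upp$) (Toponogov upper) angle comparison theorem, which is valid under the diameter hypothesis $\dd{x}{z}\leq \pi/(2\upp)$ since that keeps us strictly inside the injectivity region of $M_\upp$. Build the comparison triangle $\triangle\bar x\bar y\bar z\subset M_\upp$ with matching side lengths $a,b,c$ and let $\bar A$ denote its angle at $\bar x$. The CAT($\upp$) condition gives $A\leq \bar A$, hence $\cos A\geq \cos\bar A$, and therefore
\[
\DD{x}{y}{z}^2 \;\leq\; b^2 + c^2 - 2bc\cos\bar A.
\]
Because $M_\upp$ has constant curvature, the cosine law takes the closed form
\[
\cos(\sqrt{\upp}\,a) \;=\; \cos(\sqrt{\upp}\,b)\cos(\sqrt{\upp}\,c) + \sin(\sqrt{\upp}\,b)\sin(\sqrt{\upp}\,c)\cos\bar A,
\]
which lets me solve for $\cos\bar A$ explicitly and substitute into the previous display, reducing the problem to an inequality purely between $a$, $b$, $c$ (and $\upp$).

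The remaining, and most delicate, step is to verify the resulting one-variable estimate
\[
b^2 + c^2 - 2bc\cos\bar A \;\leq\; (1+2c^2)\,a^2
\]
using elementary series bounds such as $\sin t \leq t$, $\cos t \geq 1 - t^2/2$, and $\tan t \geq t$, all valid on the interval imposed by the diameter hypothesis. The factor $(1+2c^2)$ emerges from a second-order Taylor expansion of $\cos\bar A$ in the variable $c$ around $c=0$: the leading-order contribution reproduces $a^2$, and the correction is absorbed by the $2c^2$ term.

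\textbf{Expected main obstacle.} The comparison-theorem portion is essentially bookkeeping, and the identification $\DD{x}{y}{z}^2 = b^2+c^2-2bc\cos A$ is immediate. What is fiddly is the final trigonometric estimate: one must extract exactly the coefficient $1+2c^2$ rather than a larger multiple, which forces a careful balancing of the series bounds and uses the diameter hypothesis $\dd{x}{z}\leq \pi/(2\upp)$ to keep denominators $\sin(\sqrt{\upp}\,b)$ safely bounded away from zero. A naive application of the cosine law tends to produce a stray factor depending on $\upp$ or on $b^2$ which then has to be removed. This trigonometric bookkeeping is where I expect the technical weight of the proof to lie.
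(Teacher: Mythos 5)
The paper does not prove Proposition~\ref{prop:zhang}; it is imported wholesale as \citep[Lemma 7]{zhang2018estimate}, so there is no ``paper's own proof'' to compare against. Judged on its own terms, your Toponogov/CAT($\upp$) route is geometrically sensible (the identity $\DD{x}{y}{z}^2 = b^2+c^2-2bc\cos A$ in $T_xM$ is correct, and so is the direction of the Alexandrov angle comparison $A\leq\bar A$, hence $\cos A\geq\cos\bar A$, under an upper curvature bound), but two concrete issues remain.

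First, the step you call the ``one-variable estimate'' is not one-variable. After substituting $\cos\bar A$ from the spherical law of cosines you are left with the inequality
\begin{align*}
b^2 + c^2 - 2bc\,\frac{\cos(\sqrt{\upp}\,a) - \cos(\sqrt{\upp}\,b)\cos(\sqrt{\upp}\,c)}{\sin(\sqrt{\upp}\,b)\sin(\sqrt{\upp}\,c)} \;\leq\; (1+2c^2)\,a^2,
\end{align*}
which is a three-variable inequality over the cone of admissible side lengths $(a,b,c)$ of a geodesic triangle in the model space, constrained by the injectivity-radius condition. Verifying it by ``Taylor expansion in $c$ around $c=0$'' will not suffice because $a$ and $b$ are coupled to $c$ through the triangle relations, and the expansion coefficients depend on $a,b$. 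This is where all the difficulty lives, and your sketch leaves it entirely unverified.

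Second, a dimensional/normalization check shows the factor $(1+2c^2)$ cannot emerge from your proposed expansion unmodified: the spherical law of cosines only sees the dimensionless quantities $\sqrt{\upp}\,a$, $\sqrt{\upp}\,b$, $\sqrt{\upp}\,c$, so a genuine second-order expansion yields a correction proportional to $\upp\,c^2$, not $c^2$. You should notice this and either track the curvature explicitly (which is what \citet[Lemma 7]{zhang2018estimate} actually does) or explain why a normalization is in force. Relatedly, the paper's hypothesis $\dd{x}{z}\leq \pi/(2\upp)$ is almost certainly a typo for $\pi/(2\sqrt{\upp})$, consistent with the Assumption stated just before it.

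Finally, note that the companion Proposition~\ref{prop:rauch} in this paper is proved in Appendix~\ref{appen:geo} via the Rauch comparison theorem, bounding $\onorm{d(\expm_x)_u}$ and integrating along the straight segment in $T_xM$ from $\lm{x}{y}$ to $\lm{x}{z}$. The symmetric argument---a lower bound on $\onorm{d(\expm_x)_u}$ from the upper curvature bound, hence an upper bound on the differential of $\expm_x^{-1}$, integrated along the geodesic from $y$ to $z$---is the more direct and more obviously parallel route to this statement, and it sidesteps the multivariable trigonometric verification your plan leaves open. That is the approach worth pursuing if you are writing a self-contained proof rather than citing Zhang and Sra.
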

Applying Proposition~\ref{prop:zhang} to Lemma~\ref{dist:formal}, we obtain the following metric distortion inequality:
\begin{lembox}[Modification of Lemma~\ref{dist:formal}]  \label{dist:formal:n}
		Let $x,x',y,z$ be points on Riemannian manifold $M$ with sectional curvatures upper and lower bounded by $\upp$ and $-\kappa<0$, respectively.
		For $\widehat{\vd}:\re_{\geq 0}\to \re_{\geq 1}$ defined as in Lemma~\ref{dist:formal}, we have
		\begin{align*}
		    \DD{x'}{y}{z}^2\leq \widehat{\vd}(\dd{x}{y})\cdot (1+2\cdot \dd{x'}{y}^2)\cdot \DD{x}{y}{z}^2\,.
		\end{align*}
\end{lembox}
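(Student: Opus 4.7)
The plan is to chain together the two ingredients already on the table. First I would invoke Proposition~\ref{prop:zhang}, but with its generic basepoint $x$ relabelled as $x'$, which yields
\begin{equation*}
\DD{x'}{y}{z}^2 \leq (1+2\cdot \dd{x'}{y}^2)\cdot \dd{y}{z}^2\,.
\end{equation*}
Under the running assumptions of \S\ref{sec:nonhada} that the domain $N$ is uniquely geodesic with diameter bounded by $\pi/(2\sqrt{\upp})$, the hypothesis $\dd{x'}{z}\leq \pi/(2\upp)$ required by Proposition~\ref{prop:zhang} is automatic, so this step is admissible. Next I would apply Lemma~\ref{dist:formal}---which only uses the lower curvature bound $-\kappa$ and has no metric restriction---to the triple $x,y,z$, producing $\dd{y}{z}^2 \leq \widehat{\vd}(\dd{x}{y})\cdot \DD{x}{y}{z}^2$. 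Substituting this bound into the previous display yields exactly the claimed inequality.

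The point of the lemma is the division of labour: Proposition~\ref{prop:zhang} converts the tangent-space quantity $\DD{x'}{y}{z}$ into a manifold distance $\dd{y}{z}$ at the cost of an upper-curvature factor depending on $\dd{x'}{y}$, while Lemma~\ref{dist:formal} converts that manifold distance back into a tangent-space quantity $\DD{x}{y}{z}$ at the cost of a lower-curvature factor depending on $\dd{x}{y}$. Keeping the two basepoints $x$ and $x'$ distinct is deliberate: in the intended algorithmic use, $x'$ will play the role of the current iterate $x_{t+1}$ (whose tangent space hosts the potential function $\Psi_{t+1}$ of \S\ref{sec:riempoten}), while $x$ will play the role of the previous iterate $x_t$, at which the algorithm can actually evaluate a distortion term of the form $\widehat{\vd}(\dd{x_t}{y})$ and thereby obtain a computable valid distortion rate in the sense of Definition~\ref{def:val}.

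I do not anticipate a serious technical obstacle; the only delicate item is verifying simultaneous applicability of the two comparison theorems on the relevant triangle, which the standing assumption that all iterates of \eqref{r:nesterov} remain in $N$ makes automatic. The non-Hadamard analogue of the main argument then proceeds exactly as in the Hadamard case of \S\ref{sec:valid}--\S\ref{sec:achieve}, with $\vd(\dd{x_t}{z_t})$ replaced throughout by the product $(1+2\cdot \dd{x_{t+1}}{z_t}^2)\cdot \widehat{\vd}(\dd{x_t}{z_t})$ wherever the valid distortion rate enters Algorithm~\ref{alg:1} and Theorem~\ref{thm:main}.
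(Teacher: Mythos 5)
Your proof is correct and follows exactly the route the paper intends: chain Proposition~\ref{prop:zhang} (with basepoint $x'$) to pass from $\DD{x'}{y}{z}^2$ to $\dd{y}{z}^2$ at the cost of the upper-curvature factor $(1+2\,\dd{x'}{y}^2)$, then Lemma~\ref{dist:formal} (with basepoint $x$) to pass from $\dd{y}{z}^2$ back to $\DD{x}{y}{z}^2$ at the cost of $\widehat{\vd}(\dd{x}{y})$. The paper gives only the one-line indication ``Applying Proposition~\ref{prop:zhang} to Lemma~\ref{dist:formal}''; your write-up supplies the same chain, with the correct role assignment of basepoints and a correct observation that the domain/diameter assumptions make the hypothesis of Proposition~\ref{prop:zhang} vacuous.
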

 From Lemma~\ref{dist:formal:n}, one can conclude that at iteration $t$,
 \begin{align}\label{vd:nonhad}
     \vd{\dd{x_t}{z_t}} \cdot (1+2\cdot \dd{y_t}{z_t}^2)   
 \end{align}
is a valid distortion rate.
Thus, one can use \eqref{vd:nonhad} in lieu of $\vd(\dd{x_t}{z_t})$ for the valid distortion rate in Algorithm~\ref{alg:1}.
Then the rest proceeds in the exactly same manner; in particular, one can similarly establish the distance shrinking results as in Appendix~\ref{pf:shrink} to corroborate Theorem~\ref{thm:main} for this case.
We skip the details since they significantly overlap with the Hadamard case.

\end{document}